\def\bb#1\eb{\textcolor{blue}
{#1}} %
\newcommand{\R}{\mathds R}
\newcommand{\be}{\begin{equation}}
\newcommand{\ee}{\end{equation}}
   \def\br#1\er{\textcolor{red}{#1}} %
      \def\bb#1\eb{\textcolor{blue}{#1}} %
        \def\bg#1\eg{\textcolor{purple}{#1}} %
\title[Zermelo navigation in pseudo-Finsler metrics]{Zermelo navigation in pseudo-Finsler metrics}
\author[M. A. Javaloyes]{Miguel Angel Javaloyes}
\address{Departamento de Matem\'aticas, \hfill\break\indent
Universidad de Murcia, \hfill\break\indent
Campus de Espinardo,\hfill\break\indent
30100 Espinardo, Murcia, Spain}
\email{majava@um.es}
\author[H. Vit\'orio]{Henrique Vit\'orio}
\address{Departamento de Matem\'atica, \hfill\break\indent
Universidade Federal de Pernambuco, \hfill\break\indent
Cidade Universit\'aria, \hfill\break\indent
Recife, Pernambuco, Brazil}
\email{henriquevitorio@dmat.ufpe.br}
\date{1.12.2011}
\thanks{2010 {\em Mathematics Subject Classification:} Primary  53C60, 53C22 \\
\textbf{Key words:} Finsler metrics,  Randers and
Kropina metrics, geodesics flows, pseudo-Finsler and conic Finsler metrics.}
\begin{document}
\newtheorem{thm}{Theorem}[section]
\newtheorem{prop}[thm]{Proposition}
\newtheorem{lemma}[thm]{Lemma}
\newtheorem{cor}[thm]{Corollary}
\theoremstyle{definition}
\newtheorem{defi}[thm]{Definition}
\newtheorem{notation}[thm]{Notation}
\newtheorem{exe}[thm]{Example}
\newtheorem{conj}[thm]{Conjecture}
\newtheorem{prob}[thm]{Problem}
\newtheorem{rem}[thm]{Remark}
\newtheorem{conv}[thm]{Convention}
\newtheorem{crit}[thm]{Criterion}
\newtheorem{propdef}[thm]{Proposition-definition}
\newtheorem{lemmadef}[thm]{Lemma-definition}

\begin{abstract}
We generalize the notion of Zermelo navigation to arbitrary pseudo-Finsler metrics possibly defined in conic subsets. The translation of a pseudo-Finsler metric $F$ is a new pseudo-Finsler metric whose indicatrix is the translation of the indicatrix of $F$ by a vector field $W$ at each point, where $W$ is an arbitrary vector field.  Then we show that the Matsumoto tensor of a pseudo-Finsler metric  is equal to zero if and only if it is  the translation of a semi-Riemannian metric,  and when $W$ is homothetic, the flag curvature of the translation coincides with the one of the original one up to the addition of a non-positive constant. In this case, we also give a description of the geodesic flow of the translation.
\end{abstract}

\maketitle
\section{Introduction}

The recent appearance of pseudo-Finsler metrics in the formulation of certain modern physical theories (see for instance \cite{kostelecky11}) has attracted some attention to the study of such metrics. In this paper, we will be concerned with pseudo-Finsler metrics in a generalization of the Zermelo navigation problem. The original problem, proposed by Zermelo in 1931 \cite{Ze31}, aims to describe the trajectories that minimize the time in the presence of a wind or current assuming that the speed of the body is constant without the wind. It was observed by Z. Shen \cite{Sh03} that when the wind does not depend on time and the wind is mild, the minimizing trajectories can be described as geodesics of a Finsler metric. Indeed, when the trajectories without wind are described by geodesics of a Riemannian metric $g$, the minimizing time trajectories in the presence of a wind are geodesics of a Finsler metric whose set $\hat{\Sigma}$ of unit tangent vectors is, at each point, the translation by the wind $W$ of the set $\Sigma$ of unit tangent vectors of $g$:
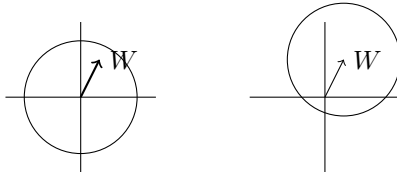
\begin{figure}[H]
\begin{minipage}{.49\linewidth}
\begin{tikzpicture}
\draw (-1,0) -- (1,0);
\draw (0,-1) -- (0,1);
\draw (0,0) circle (0.75cm);
\draw[->,thick] (0,0) -- (0.25,0.5) node[below,right]{$W$};
\end{tikzpicture}
\hspace{1cm}
\begin{tikzpicture}
\draw (-1,0) -- (1,0);
\draw (0,-1) -- (0,1);
\draw (0.25,0.5) circle (0.75cm);
\draw[->] (0,0) -- (0.25,0.5) node[below,right]{$W$};
\end{tikzpicture}
\end{minipage}
\caption{Indicatrix translated by $W$ with $F(-W)<1$}\label{mildTrans}
\end{figure}
\noindent Now, if we consider strong winds, then two pseudo-Finsler metrics will emerge, one with positive-definite fundamental tensor, and the other one of Lorentz type. Moreover, both are defined in the same conic convex region (see Figure \ref{strongTrans2}).
This kind of situation occurs, for instance, in the study of causality of space-times (see \cite{CJS14}).
\begin{figure}
\centering
\begin{tikzpicture}
   \draw [blue,thick,domain=-30:210] plot ({cos(\x)}, {sin(\x)});
   \draw (0,-2) -- (1.732,1);
   \draw (0,-2) -- (-1.732,1);
   \draw[->,thick] (0,-2) -- (0,0) node[below,right]{$W$};
\end{tikzpicture}\hspace{1cm}
\begin{tikzpicture}
   \draw [blue,thick,domain=210:330] plot ({cos(\x)}, {sin(\x)});
   \draw (0,-2) -- (1.732,1);
   \draw (0,-2) -- (-1.732,1);
   \draw[->,thick] (0,-2) -- (0,0) node[below,right]{$W$};
\end{tikzpicture}
\caption{When $F(-W)>1$ there are two translated indicatrices}\label{strongTrans2}
\end{figure}
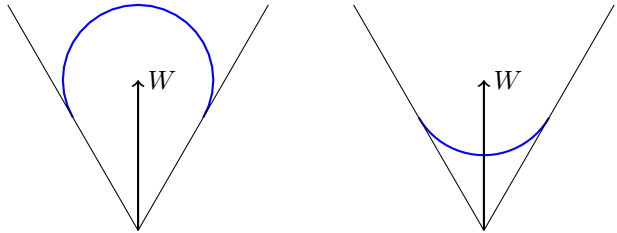

In this paper we will consider the general problem of navigation on a conic pseudo-Finsler manifold $(M,F)$. In this setting, we will say that a pseudo-Finsler metric $\hat{F}$ is a translation of $F$ by a vector field $W$ if, at each point $p\in M$, its indicatrix is (a connected open subset of) the translation by $W_p$ of the indicatrix of $F$. In the case $F$ is semi-Riemannian, the translated metrics will consist of the metrics that, at each point, are of Randers or Kropina types (compare with \cite[Proposition 3.1]{BiJa11} or \cite{BCS04} in the Randers case, and with \cite[Proposition 2.40]{CJS14} for wind Riemannian structures)
\begin{equation}\nonumber
\pm\sqrt{h(v,v)}+\beta(v)~~~{\rm and}~~~\frac{h(v,v)}{\beta(v)},
\end{equation}
where $h$ is a semi-Riemannian metric and $\beta$ a one-form, being thus called pseudo-Randers-Kropina metrics (see $\S$2.3 for a precise description of these metrics).

In $\S$3 we define the Matsumoto tensor of a conic pseudo-Finsler metric. This definition differs from the original one by the appearance of a sign depending on the index of the fundamental tensor. We then extend to these metrics a classic theorem by Matsumoto and Hojo \cite{MaHo78}, proving
\begin{thm}\label{thmmatsumoto}
A conic pseudo-Finsler manifold $(M,F)$ of dimension at least $3$ is of pseudo-Randers-Kropina type if, and only if, its Matsumoto tensor vanishes identically.
\end{thm}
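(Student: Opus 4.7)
The plan is to prove the two implications separately, with essentially all the work in the direction ``vanishing Matsumoto tensor implies pseudo-Randers-Kropina''.

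For the easier direction, I would compute the Cartan tensor $C_{ijk}$ directly for a metric of the form $F = \pm\sqrt{h(v,v)} + \beta(v)$ or $F = h(v,v)/\beta(v)$. In both cases $F$ is built out of a quadratic form and a one-form, so one can write $C_{ijk}$ explicitly in terms of $h$, $\beta$, and the Hilbert form $\ell_i$. A term-by-term comparison with $\tfrac{1}{n+1}(I_i h_{jk} + I_j h_{ik} + I_k h_{ij})$ should yield $M_{ijk}\equiv 0$, exactly as in the classical Randers case, with only the sign coming from the index of $g_F$ requiring attention. As a conceptual shortcut, I would use the translation-invariance philosophy developed earlier in the paper: semi-Riemannian metrics have $C = 0$, hence $M = 0$, so it suffices to observe that $M=0$ is preserved under translation by a vector field $W$ (this being essentially the reason the sign was inserted in the definition of $M$).

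For the converse, suppose $M_{ijk}\equiv 0$, i.e.
\begin{equation}\nonumber
C_{ijk} \;=\; \tfrac{1}{n+1}\bigl(I_i\,h_{jk} + I_j\,h_{ik} + I_k\,h_{ij}\bigr)
\end{equation}
(with the appropriate sign). The strategy is to read this equation as strong algebraic rigidity of each indicatrix $\Sigma_p\subset T_pM$: the Cartan tensor measures the deviation of $\Sigma_p$ from being a quadric, and the Matsumoto identity pins this deviation down to a one-parameter family controlled by the mean Cartan form $I_i$, which is precisely the deviation one obtains when a quadric is translated by a vector. I would then carry out the classical Matsumoto--Hojo construction pointwise: at each $p$ extract from $I$ a candidate translation vector $W_p$ and show that $\Sigma_p - W_p$ lies in a (possibly degenerate) quadric, thereby producing the pair $(h,\beta)$. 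The hypothesis $\dim M \geq 3$ enters here, as in the Riemannian Finsler case, to ensure the trace equations determining $I$ from $C$ have enough room to be inverted. Smooth dependence of $W$, $h$ and $\beta$ on $p$ follows from smoothness of $F$ and of all derived tensors.

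The main obstacle is this second step: turning the pointwise algebraic identity into a genuine translation structure. Three complications appear relative to the classical Matsumoto--Hojo argument. First, because we work in a conic domain, the indicatrix need not close up, so the integration argument has to be performed along radial rays in $\mathcal{A}$ using only $0$-homogeneity of $C$ and $I$. Second, the strong-wind regime splits the indicatrix into two branches of different signature, and one must keep track of which branch of $\pm\sqrt{h}+\beta$ one lands on. Third, the Kropina case is the degenerate limit in which the translated quadric becomes tangent to the origin; here $h$ has a null direction and $\beta$ is recovered from that direction, and the construction must be set up so that this degeneration is captured rather than excluded. Once these case distinctions are handled, the translation framework of \S 2 and the standard Matsumoto--Hojo integration close the argument.
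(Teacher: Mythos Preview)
Your proposal takes a genuinely different route from the paper. The paper does \emph{not} adapt the original Matsumoto--H\=oj\=o analytic argument; instead, following Mo--Huang, it interprets the Matsumoto tensor affine-geometrically: it shows that (up to a positive scalar) ${\rm M}$ is the cubic form of the indicatrix $\Sigma_p$ with respect to its \emph{Blaschke normal field}, and then invokes the classical Maschke--Pick--Berwald theorem, which says that a non-degenerate hypersurface in an affine space (of dimension $\geq 3$) has vanishing Blaschke cubic form if and only if it is a piece of a hyperquadric. Combined with Proposition~\ref{Randers-Kro} (pseudo-Randers-Kropina $\Leftrightarrow$ indicatrix is a translated quadric), this gives both directions at once. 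The sign $\varepsilon$ in Definition~\ref{defMat} is exactly what is needed to make ${\rm M}$ match the Blaschke cubic form, and the dimension hypothesis enters only through Maschke--Pick--Berwald.

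Against this, your plan has two soft spots. First, the shortcut ``$M=0$ is preserved under translation'' is precisely what the affine interpretation makes transparent (the Blaschke cubic form is an equiaffine invariant), but without that interpretation it is not an obvious principle: after translating, the Cartan tensor is no longer zero, so you would be reduced to the direct computation you were trying to avoid. Second, your description of the hard direction (``extract from $I$ a candidate translation vector $W_p$ and show $\Sigma_p-W_p$ lies in a quadric'') is appealing geometric intuition, but it is not what the Matsumoto--H\=oj\=o argument actually does; their proof is a fairly delicate algebraic/ODE analysis of the C-reducibility identity, and it does not hand you $W_p$ directly. Extending that analysis across signature changes, the conic domain, and the Kropina degeneration is exactly the kind of case-by-case work the affine-geometric approach bypasses: Maschke--Pick--Berwald holds for any non-degenerate hypersurface, so all of the complications you list are absorbed into one classical theorem.
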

\noindent Our proof (see $\S$3.3)
will consist in adapting to the pseudo-Finsler setting a very geometric proof by Mo and Huang \cite{MoHu10} which makes use of affine differential geometry.

In $\S$4 we will consider the Legendre dual of a conic pseudo-Finsler metric from a more geometric viewpoint. It is remarkable that, as in the Finsler case, the process of translating a conic pseudo-Finsler metric has a nice dual description. Indeed, we show in Proposition \ref{propdichotomy}  that the Legendre duals of $F$ and $\hat{F}$ are related by
\begin{equation}\nonumber
\hat{F}^*(\xi)=F(\xi)+W(\xi)~~~{\rm or}~~~\hat{F}^*(\xi)=-\tilde{F}^*(\xi)+W(\xi),
\end{equation}
where $\tilde{F}$ is the reverse metric of $F$, according to the translation being straight or reverse (see Definition \ref{CharacTrans}).
 The relation between the Legendre duals of $F$ and $\hat{F}$ allows us to relate the co-geodesic flows of $F$ and $\hat{F}$ in some special cases, as 
 noted by Ziller in \cite{Ziller} in the case where $W$ is a Killing field on the sphere. So, in $\S$5 we consider the relation between the geodesics and between the flag curvatures of $F$ and $\hat{F}$ in the case $\hat{F}$ is a translation of $F$ by a homothetic vector field $W$, which means that
\begin{equation}\label{homothetic}
(\psi^W_t)^*F={\rm e}^{-\sigma t}F,~~~\mbox{for some $\sigma\in\mathbb{R}$},
\end{equation}
where $\psi_t^W$ is the flow of $W$.
 In this case we are able to apply a simple, but useful, general result, Lemma \ref{lemmaflow}, to relate the co-geodesic flows of $F$ and $\hat{F}$,
providing a simple proof of
\begin{thm}\label{geodesicflow}
The unit speed geodesics of $(M,\hat{F})$ can be expressed (at least in a neighborhood of $t=0$) as
$\hat\gamma(t)=\psi_t^W(\gamma(f(t)))$, where
\[f(t)=\begin{cases}
\frac{e^{\sigma t}-1}{\sigma}, &\text{if $\sigma\not=0$}\\
t,& \text{if $\sigma=0$}
\end{cases}\]
and $\gamma$ is a unit speed geodesic of $(M,F)$. Moreover, if $(M,F)$ is geodesically complete and $W$ is a complete homothetic vector field, then $(M,\hat{F})$ is geodesically complete.
\end{thm}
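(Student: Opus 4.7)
The plan is to lift everything to the cotangent bundle via Legendre duality and then apply Lemma \ref{lemmaflow}. First I recall that the unit-speed geodesics of a (pseudo-)Finsler metric $F$ are the $\pi$-projections of the integral curves of the Hamiltonian vector field $X_{F^*}$ of the Legendre dual $F^*$ on $T^*M$: since $F^*$ is fiberwise $1$-homogeneous, $X_{F^*}$ projects to a $0$-homogeneous vector field on $M$, so integral curves lying on any positive level set of $F^*$ project to unit-speed $F$-geodesics.

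By Proposition \ref{propdichotomy}, in the straight case $\hat F^* = F^* + K$ with $K(\xi) := \xi(W)$, and linearity of the Hamiltonian assignment gives $X_{\hat F^*} = X_{F^*} + X_K = X_{F^*} + \tilde W$, where $\tilde W = X_K$ is the canonical lift of $W$ to $T^*M$ and generates the cotangent lift $\Psi_t$ of $\psi_t^W$. The homothety condition $(\psi_t^W)^* F = e^{-\sigma t} F$ transfers to the cotangent side, via the sup-characterization of $F^*$ over the indicatrix of $F$, as $\Psi_t^* F^* = e^{\sigma t} F^*$, equivalently $\Psi_{t\ast} X_{F^*} = e^{-\sigma t} X_{F^*}$. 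This is exactly the hypothesis of Lemma \ref{lemmaflow}, whose conclusion identifies the flow of $X_{F^*} + \tilde W$ with the composition $\xi \mapsto \Psi_t(\phi^{F^*}_{f(t)}(\xi))$, with $f$ the function appearing in the statement. Projecting by $\pi$ and using $\pi \circ \Psi_t = \psi_t^W \circ \pi$ together with the fact that $\pi \circ \phi^{F^*}_s$ traces a unit-speed $F$-geodesic $\gamma$, I obtain $\hat\gamma(t) = \psi_t^W(\gamma(f(t)))$.

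The reverse case $\hat F^* = -\tilde F^* + K$ is handled analogously using the reverse metric $\tilde F$, which is also homothetic for $W$ with the same constant $\sigma$ and whose unit-speed geodesics are time-reversals of those of $F$. Completeness then follows directly from the explicit formula: the reparametrization $f$ is defined on all of $\mathbb{R}$, so if $\psi_t^W$ is complete and every unit-speed $F$-geodesic $\gamma$ is globally defined, then so is $\hat\gamma$. I expect the main technical obstacle to be the careful cotangent-side bookkeeping for the identities $\Psi_t^* F^* = e^{\sigma t} F^*$ and $\Psi_{t\ast} X_{F^*} = e^{-\sigma t} X_{F^*}$ on the relevant conic subset of $T^*M$, together with verifying that the integral curve stays in that conic region for small $t$; once these are in place, the theorem follows almost mechanically from Lemma \ref{lemmaflow}.
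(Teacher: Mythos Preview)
Your approach to the geodesic formula is essentially the paper's: both lift to $T^*M$, use Proposition~\ref{propdichotomy} to write $\hat F^* = \epsilon F_\epsilon^* + W$, split the Hamiltonian vector field accordingly, derive the scaling $(\psi_t^{S_0})^* S_\epsilon = e^{\sigma t} S_\epsilon$ from the homothety hypothesis, and feed this into Lemma~\ref{lemmaflow} to obtain the composition formula for the flows (this is exactly Proposition~\ref{propflow}). Projecting to $M$ then gives the expression for $\hat\gamma$, and the reverse case is handled via $\tilde F$ just as in the paper.

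There is, however, a real gap in your completeness argument. Knowing that the curve $t \mapsto \psi_t^W(\gamma(f(t)))$ is defined for all $t \in \mathbb{R}$ is not enough: since $\hat F$ is only defined on a conic open subset $\hat A\subset TM$, you must also verify that the velocity $\dot{\hat\gamma}(t)$ remains in $\hat A$ for all $t$; otherwise the curve ceases to be admissible and the local identification with an $\hat F$-geodesic breaks down. The paper addresses this by a direct computation: writing $v(t) = (\psi_t^W)_*\bigl(\dot f(t)\,\dot\gamma(f(t))\bigr)$, one uses the homothety of $W$ to show first that $F(v(t))=1$ and then that
\[
g_{v(t)}\bigl(v(t),\,v(t)+W\bigr)=e^{-\sigma t}\bigl(1+g_v(v,W)\bigr),
\]
which keeps a fixed sign for all $t$. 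By Proposition~\ref{translatingF}, this is precisely the condition ensuring that $\dot{\hat\gamma}(t)=v(t)+W$ stays in the same component of $\hat A$ throughout. Without this check, your argument produces a globally defined smooth curve in $M$, but not a global $\hat F$-geodesic.
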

\noindent This theorem generalizes to pseudo-Finsler metrics the central result in \cite{Rob07} and \cite{HuMo11}. It is remarkable that we get the same expression independently if the translation is straight or reverse.
As for the flag curvature, we derive the following theorem
\begin{thm}\label{theoremcurvature}
Given $u\in A$ and a $g_u$-nondegenerate $2$-plane $\Pi$ containing $u$, let $w\in\Pi$ be such that with $g^{\hat{F}}_u(u,w)=0$. Then,
$\tilde{\Pi}:={\rm span}\{u/\hat{F}(u)-W,w\}$ is a $(u/\hat{F}(u)-W)$-nondegenerate (w.r.t. $F$) $2$-plane and
\begin{equation}\nonumber
K_{\hat{F}}\bigl(u\hspace{0.05cm},\hspace{0.05cm}\Pi\bigr)~=~K_{F}\bigl(u/\hat{F}(u)-W\hspace{0.05cm},\hspace{0.05cm}\tilde{\Pi}\bigr)-\frac{1}{4}\sigma^2,
\end{equation}
\end{thm}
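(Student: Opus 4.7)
The plan is to compute the flag curvature $K_{\hat{F}}(u,\Pi)$ via Jacobi fields along the unit-speed $\hat{F}$-geodesic through $u$, and to express these in terms of $F$-Jacobi fields by means of the correspondence furnished by Theorem~\ref{geodesicflow}. By the $0$-homogeneity of the flag curvature in the flagpole direction, I may assume $\hat{F}(u)=1$. Setting $v:=u-W_{\gamma(0)}$, the very definition of the translation gives $F(v)=1$. Let $\hat\gamma$ be the unit-speed $\hat{F}$-geodesic with $\hat\gamma'(0)=u$ and $\gamma$ the unit-speed $F$-geodesic with $\gamma'(0)=v$, so that Theorem~\ref{geodesicflow} yields $\hat\gamma(t)=\psi_t^W(\gamma(f(t)))$.

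To produce an $\hat{F}$-Jacobi field $\hat J$ along $\hat\gamma$ with $\hat J(0)=w$, I would choose a variation $\gamma_s$ of $\gamma$ by unit-speed $F$-geodesics with $\partial_s\gamma_s(0)|_{s=0}=w$, with the initial value of $D_{\partial s}\partial_t\gamma_s|_{s=t=0}$ tuned so that the induced $\hat J$ satisfies $\hat D\hat J/dt|_{t=0}=0$ (the standard normalization for reading off the flag curvature from the Jacobi equation). By Theorem~\ref{geodesicflow}, $\hat\gamma_s(t):=\psi_t^W(\gamma_s(f(t)))$ is a variation of $\hat\gamma$ by unit-speed $\hat{F}$-geodesics, whose variation field is
\[
\hat J(t)\;=\;d\psi_t^W\bigl(J(f(t))\bigr),
\]
where $J$ is the $F$-Jacobi field along $\gamma$ produced by $\gamma_s$, with $J(0)=w$.

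The core calculation is then to compute the $\hat{F}$-Chern covariant second derivative $\hat D^2\hat J/dt^2|_{t=0}$ and pair it with $w$ via $\hat g_u^{\hat{F}}$. Chain-rule differentiation of $\hat J(t)=d\psi_t^W(J(f(t)))$ produces three kinds of contributions: (i) the term involving $D^2J/ds^2|_{s=0}$, which by the $F$-Jacobi equation is $-R^F_v(J)$ and, after pairing with $w$ and dividing by the appropriate denominator, yields $K_F(v,\tilde\Pi)$; (ii) a term with $f''(0)=\sigma$ coupled to $dJ/ds|_{s=0}$; (iii) terms from the infinitesimal generator $W$ of $\psi_t^W$ and from $\nabla W$, which by the homothety $(\psi_t^W)^*F={\rm e}^{-\sigma t}F$ are controlled by $\sigma$. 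The tuning of the initial derivative in the previous step, together with the homothety identity for $\nabla W$, should collapse (ii) and (iii) into exactly $-\tfrac{\sigma^2}{4}w$ after the pairing.

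The main obstacle is that $d\psi_t^W$ does not intertwine the Chern connections of $F$ and $\hat{F}$ on the nose: even though $\psi_t^W$ is a homothety for $F$, its image lives in $\hat{F}$-geometry, and one must carefully compare the two connections along $\hat\gamma$. The relevant discrepancy is governed by $\nabla W$, whose special form under the homothetic condition is precisely what squares the factor $\sigma$ into the clean $\sigma^2/4$ correction; alternatively, one can transport the calculation to the cotangent side and apply Lemma~\ref{lemmaflow} to pass between the co-geodesic flows, where the $e^{\sigma t}$ factor from $f'(t)$ is easier to manage. The nondegeneracy of $\tilde\Pi=\mathrm{span}\{v,w\}$ with respect to $g^F_v$ then follows from the $g_u$-nondegeneracy of $\Pi$, from $d\psi_0^W=\mathrm{Id}$, and from the linear independence of $w$ from $u$ (and hence from $v$).
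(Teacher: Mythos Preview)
Your plan follows the classical tangent-side route (variations of geodesics, Jacobi fields, Chern connection), which is genuinely different from the paper's cotangent-side argument. The paper works entirely in $T^*M$: it relates the co-geodesic flows of $\hat F$ and $F_\epsilon$ via $\psi_t^{S_2}=\psi_t^{S_0}\circ\psi_{f(t)}^{S_\epsilon}$ (Proposition~\ref{propflow}), deduces that the Jacobi curves of the two Hamiltonians are related by a linear symplectic map and the reparametrization $g(t)=\frac{1}{H_2(\xi)\sigma}\ln(1+\epsilon\sigma H_\epsilon(\xi)t)$ (Theorem~\ref{theoremcurves}), and then invokes the transformation law for fanning curves (Proposition~\ref{propfanning}). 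The correction $-\tfrac14\sigma^2$ falls out as the Schwarzian derivative $\tfrac12\{g(t),t\}|_{t=0}$; no comparison of Chern connections is ever needed, because the symplectic form on $T^*M$ is independent of both metrics.

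Your approach can in principle be made to work (it is essentially the strategy of \cite{MoHu07} in the classical Finsler setting), but as written the proposal has a gap precisely where you yourself locate the ``main obstacle'': you assert that the cross-terms coming from $f''(0)$, from $\nabla W$, and from the discrepancy between the two Chern connections ``should collapse'' to $-\tfrac{\sigma^2}{4}w$, yet you do not carry this out. That collapse \emph{is} the theorem---everything else is bookkeeping---and establishing it requires either an explicit formula relating the Chern connections of $F$ and $\hat F$ along $\hat\gamma$ (which is not a consequence of the homothety alone, since $\hat F$ is not a pull-back of $F$), or the passage to the cotangent side that you mention as an alternative. The paper takes the latter route, and the payoff is that the $-\sigma^2/4$ becomes a one-line Schwarzian computation rather than a connection-chasing argument. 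Your nondegeneracy claim for $\tilde\Pi$ is correct in substance (since $T_u\hat\Sigma=T_v\Sigma$ gives $g_v^F(v,w)=0$, and $g_v^F(w,w)$ is a nonzero multiple of $g_u^{\hat F}(w,w)$ by~\eqref{sff}), though the appeal to $d\psi_0^W=\mathrm{Id}$ is not the relevant reason.
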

\noindent  This is a well known result in the classical case where 
 $F$ is a Finsler metric and $F(-W)<1$ (see \cite{MoHu07} and \cite{BCS04} when $F$ is Riemannian).
 It turns out that last theorem is somewhat natural from the point of view of the theory of linear symplectic invariants of a generic class of curves of Lagrangean subspaces, called {\it fanning curves}, first introduced by Ahdout \cite{Ah}, and then developed by \'Alvarez Paiva and Dur\'an \cite{AD}, and \'Alvarez Paiva, Dur\'an and Vit\'orio \cite{ADH} (see also \cite{Henrique}) inspired by work of Foulon \cite{Foulon} and Grifone \cite{Grifone}.
We have thus considered appropriate to introduce the notion of flag curvature of a pseudo-Finsler metric using the fanning curves approach. This provides a novel way of thinking of flag curvature in contrast to the usual definitions that use the well known Finslerian connections (Berwald, Cartan, Chern). We do that in $\S$5.1, $\S$5.2 and $\S$5.3. Then we show that the relation obtained between the co-geodesic flows of $F$ and $\hat{F}$ directly implies  a linear symplectic equivalence (up to reparametrization) of the corresponding fanning curves (Theorem \ref{theoremcurves}), which in turn gives the Theorem \ref{theoremcurvature}.

We will finish the paper with a section of conclusions and consequences $\S$6. In particular, as a byproduct of our investigations, we extend all the Randers metrics with constant flag curvature to conic Finsler metrics which are geodesically complete and we also provide the natural candidates for a classification of pseudo-Randers-Kropina metrics with constant flag curvature.
\\\\\\\\\\\\\\\\\\\\\\

\section{Pseudo-Finsler metrics and Zermelo navigation}


\subsection{Pseudo-Minkowski norms}
Let us begin by introducing the notion of (conic) pseudo-Minkowski norm. Along this section we will denote by $V$ a vector space of dimension $n$.
\begin{defi}\label{defpseudo}
Let $V$ be a vector space and $A$ a conic open connected subset of $V\setminus \{0\}$, namely,  an open subset such that $\lambda v\in A$ for every $v\in A$ and $\lambda>0$.  A smooth function $F:A\rightarrow(0,+\infty)$ is said to be a (conic) pseudo-Minkowski norm  on $V$ if
\begin{enumerate}[(i)]
\item it is positive homogeneous of degree $1$, namely, $F(\lambda v)=\lambda F(v)$ for every $v\in A$ and $\lambda>0$,
\item for any $v\in A$, the fundamental tensor $g_v$ defined as
\begin{equation}\label{fundtensor}
g_v(u,w)=\frac 12\left. \frac{\partial^2}{\partial t\partial s}F\left(v+ t u+sw\right)^2\right|_{t=s=0}
\end{equation}
for every $u,w\in V$ is non-degenerate.
\end{enumerate}
\end{defi}
We will understand that the pseudo-Minkowski space is conic without saying it explicitly. Recall that from the homogeneity of $F$,
\begin{equation}\label{gv}
\frac{1}{2}DF^2(v)(w)~=~g_v(v,w)
\end{equation}
and
\begin{equation}\label{df^2}
\frac{1}{2}DF^2(v)(v)~=~g_v(v,v)~=~F(v)^2.
\end{equation}
The indicatrix of a conic pseudo-Minkowski space $(V,F)$ is defined as the subset $$\Sigma~=~\{ v\in A: F(v)=1\}.$$ From \eqref{gv} and \eqref{df^2}, and the connectivity of $A$, it follows that $\Sigma$ is a smooth embedded, connected hypersurface of $V$, with
\begin{equation}
T_v\Sigma~=~\{w\in V:g_v(v,w)=0\}
\end{equation}
and with everywhere transverse position vector. Moreover, if $\xi$ denotes the opposite to the position vector, which is transverse  to $\Sigma$, and $\sigma_\xi$ is the second fundamental form of $\Sigma$ w.r.t. $\xi$, a direct computation (see for instance \cite[Theorem 2.14]{JaSan11}) shows that
\begin{equation}\label{sff}
g_v(X,Y)~=~\sigma^\xi(X,Y)
\end{equation}
for $X$ and $Y$ tangent to $\Sigma$, so that $\sigma^\xi$ is non-degenerate. Actually, these properties completely characterize indicatrices:
\begin{prop}\label{charindicatrix}
Let $V$ be a vector space. Then a subset $\Sigma$ of $V$ is the indicatrix of a pseudo-Minkowski norm $F$ if and only if it is a smooth, embedded, connected, hypersurface of $V$, the position vector is transversal to it  and its second fundamental form  with respect to one (and then to all) transversal vector is non-degenerate everywhere.
\end{prop}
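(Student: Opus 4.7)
The ``only if'' direction essentially records the observations collected just before the statement: given such an $F$, the level set $\Sigma=F^{-1}(1)$ is a smooth embedded hypersurface by the implicit function theorem (its differential is nowhere vanishing along $\Sigma$ by \eqref{df^2}), connectivity is inherited from $A$ through the retraction $w\mapsto w/F(w)$, transversality of the position vector follows from $g_v(v,v)=F(v)^2\ne 0$, and the identification \eqref{sff} of $g_v|_{T_v\Sigma}$ with $\sigma^\xi$ converts non-degeneracy of $g_v$ into non-degeneracy of $\sigma^\xi$.

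For the converse, the plan is to reconstruct $F$ from $\Sigma$ as the inverse of the radial parametrization. The transversality of the position vector implies that the map $\Phi:\Sigma\times(0,\infty)\to V$, $\Phi(v,\lambda)=\lambda v$, has everywhere surjective differential (its image at $(v,\lambda)$ is $\lambda T_v\Sigma+\mathbb{R}v=V$); being a smooth map between $n$-dimensional manifolds, it is therefore a local diffeomorphism. I would take $A$ to be a conic open connected set on which $\Phi$ is a diffeomorphism---possibly shrinking to deal with global non-injectivity, which the definition of pseudo-Minkowski norm permits---and declare $F(\lambda v):=\lambda$. By construction $F$ is smooth, positively homogeneous of degree $1$, and satisfies $F^{-1}(1)=\Sigma$.

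The decisive step is then to prove that the fundamental tensor $g_v$ is non-degenerate. Since $F^2$ is homogeneous of degree $2$, one has $g_{\lambda v}=g_v$ for $\lambda>0$, so it suffices to check non-degeneracy for $v\in\Sigma$. Fixing such a $v$, transversality of the position vector yields the decomposition $V=\mathbb{R}v\oplus T_v\Sigma$, relative to which I would verify in turn: (a) $g_v(v,v)=F(v)^2=1$, from \eqref{df^2}; (b) $g_v(v,w)=0$ for $w\in T_v\Sigma$, by differentiating $F\circ\gamma\equiv 1$ at $t=0$ for a curve $\gamma\subset\Sigma$ with $\gamma'(0)=w$ and invoking \eqref{gv}; (c) $g_v(X,Y)=\sigma^\xi(X,Y)$ for $X,Y\in T_v\Sigma$, by differentiating the identity $DF^2(\gamma(t))(Y(t))\equiv 0$ for a tangential extension $Y(t)\in T_{\gamma(t)}\Sigma$ of $Y$, extracting $2g_v(X,Y)+DF^2(v)(Y'(0))=0$, and reading off the coefficient of $\xi=-v$ in $Y'(0)$ from (a) and (b). Items (a)--(c) make the matrix of $g_v$ in this decomposition block-diagonal with blocks $1$ and $\sigma^\xi|_{T_v\Sigma}$, the latter non-degenerate by hypothesis; hence $g_v$ is non-degenerate.

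The main substantive point is step (c), which is the usual dictionary between the Hessian of a radially homogeneous function and the second fundamental form of its level set with respect to the position vector; once this identification is in place, everything else is bookkeeping on the transverse decomposition. A secondary concern is the global well-definedness of $F$, handled as indicated by restricting the conic domain $A$ so that the radial parametrization $\Phi$ is injective, which is fully consistent with the definition of a conic pseudo-Minkowski norm.
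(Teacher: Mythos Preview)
Your argument is correct and is precisely the natural one; the paper's own proof is the single sentence ``The proof is straightforward,'' so you have supplied far more detail than the paper does. The radial-parametrization construction and the block decomposition $V=\mathbb{R}v\oplus T_v\Sigma$ you use are exactly what the paper invokes in the proof of the manifold analogue (their Proposition on hypersurfaces of $TM$), and your derivation of $g_v|_{T_v\Sigma}=\sigma^\xi$ in step~(c) is the computation the paper records as \eqref{sff}.

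One small point on the shrinking caveat: if you genuinely shrink $A$, you have shown only that a connected open piece of $\Sigma$ is an indicatrix, not $\Sigma$ itself, so the converse as literally stated would still be open. The paper, in its proof of the manifold version, simply asserts that transversality together with connectedness of $\Sigma$ forces the radial map to be injective; you are being more cautious than the paper here rather than less. For the uses made of the proposition later (which are local), the distinction is immaterial.
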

\begin{proof}
The proof is straightforward.
\end{proof}

\begin{defi}\label{transMink}
Given a vector space $V$, we say that a pseudo-Minkowski norm $\hat{F}:\hat{A}\rightarrow (0,+\infty)$ is a translation by a vector $w\in V$ of the pseudo-Minkowski norm $F:A\rightarrow (0,+\infty)$ if its indicatrix $\hat{\Sigma}$ is an open connected subset of $\Sigma+w$, where $\Sigma$ is the indicatrix of $F$. In other words, if
\begin{equation}\label{Navrelation}
F\left(\frac{v}{\hat{F}(v)}-w\right)=1,
\end{equation}
for every $v\in \hat{A}$.
\end{defi}
In general, we can get  more than one pseudo-Minkowski norm by translation of a given one.

Note that although the signatures of the second fundamental forms of $\Sigma$ and $\hat{\Sigma}$ coincide if we choose an appropriate transversal vector,
this does not imply that the fundamental tensors of $F$ and $\hat{F}$  can be identified, since the position vector in $\Sigma$, when translated into $\hat{\Sigma}$,  can have opposite orientation to the position vector in $\hat{\Sigma}$ (see Figure \ref{strongTrans2}).

\begin{prop}\label{translatingF}
Given a pseudo-Minkowski norm $F:A\rightarrow (0,+\infty)$ and a vector $w\in V$, the indicatrices of the translations of $F$ by $w$ are the connected
components of $\{v/F(v)+w: F(v)+g_v(w,v)\not=0\}$.  Moreover, let $Z:\hat{A}\rightarrow\R$ be one of such translations, with $g$ the fundamental tensor of $F$, and $h$, the fundamental tensor of $Z$. Then $g_{v/Z(v)-w}(v/Z(v)-w,v)\not=0$ for every $v\in \hat{A}$, and
\begin{enumerate}[(i)]
\item if $g_{v/Z(v)-w}(v/Z(v)-w,v)>0$ for every $v\in\hat{A}$,  the index of $h$ coincides with the index of $g$. In such a case, we say that $Z$ is a {\em straight  translation } of $F$.
\item If $g_{v/Z(v)-w}(v/Z(v)-w,v)<0$ for every $v\in \hat{A}$,   the index of $h$ is $n-\mu-1$, where $\mu\geq 0$ is the index of $g$. In such a case, we say that $Z$ is a {\em reverse  translation } of $F$.
\end{enumerate}
\end{prop}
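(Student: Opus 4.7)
The plan is to apply the characterization of indicatrices given by Proposition~\ref{charindicatrix} to the translated set $\Sigma + w$, and then to compare the fundamental tensors $h$ and $g$ by expressing both as second fundamental forms via \eqref{sff}.

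For the first part, $\Sigma+w$ is automatically a smooth embedded hypersurface, and at a point $u+w$ with $u\in\Sigma$ its tangent space coincides with $T_u\Sigma=\{x:g_u(u,x)=0\}$. Therefore transversality of the position vector $u+w$ reduces to $1+g_u(u,w)\neq 0$. Writing $u=v/F(v)$ for $v\in A$ and using the $0$-homogeneity of $v\mapsto g_v$, this condition becomes $F(v)+g_v(w,v)\neq 0$. Non-degeneracy of the second fundamental form with respect to any transversal vector is inherited from $\Sigma$, since translation is a rigid motion of $V$. Proposition~\ref{charindicatrix} then identifies the connected components of $\{v/F(v)+w:F(v)+g_v(w,v)\neq 0\}$ with the indicatrices of the possible translations.

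For the sign dichotomy, I would fix $v\in\hat A$, set $u:=v/Z(v)-w\in\Sigma$, and exploit $v=Z(v)(u+w)$ together with $F(u)=1$ and bilinearity to obtain
\begin{equation*}
g_u(u,v)=Z(v)\bigl(1+g_u(u,w)\bigr).
\end{equation*}
Since $Z(v)>0$, the transversality already established guarantees $g_u(u,v)\neq 0$, and the sign of $g_u(u,v)$ matches that of $1+\lambda$, where $\lambda:=g_u(u,w)$.

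The main step is the computation of $h_{u+w}$. The second fundamental form of $\hat\Sigma$ at $u+w$ with respect to the transversal $-u$ agrees, by translation invariance, with the second fundamental form of $\Sigma$ at $u$ with respect to $-u$; by \eqref{sff} this is $g_u$ restricted to $T_u\Sigma$. But formula \eqref{sff} applied to $\hat\Sigma$ itself uses the transversal $-(u+w)$. Decomposing $-(u+w)=(1+\lambda)(-u)-\tau$ with $\tau\in T_u\Sigma$, and applying the standard rescaling rule for the second fundamental form under a change of transversal, one obtains $h_{u+w}=g_u/(1+\lambda)$ on $T_u\Sigma$. Combining this with $h_{u+w}(u+w,u+w)=Z(u+w)^2=1$ and the $h_{u+w}$-orthogonal splitting $V=\mathbb{R}(u+w)\oplus T_u\Sigma$, the index of $h_{u+w}$ on $V$ can be read off from that of $g_u/(1+\lambda)$ on $T_u\Sigma$, yielding $\mu$ when $1+\lambda>0$ and $n-1-\mu$ when $1+\lambda<0$. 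I expect the careful bookkeeping of how the sign of the transversal changes under translation --- tying together the change-of-transversal rescaling and the signs in (i), (ii) --- to be the main obstacle.
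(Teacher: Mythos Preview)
Your proposal is correct and follows essentially the same approach as the paper: both invoke Proposition~\ref{charindicatrix} for the first part and use \eqref{sff} together with the behavior of the second fundamental form under a change of transversal for the index computation. Your argument is in fact more explicit than the paper's, which simply asserts that the index is preserved (resp.\ complemented on $T_u\Sigma$) when the two transversals $v/Z(v)-w$ and $v$ point to the same (resp.\ opposite) side of $\Sigma$, whereas you actually compute the rescaling factor $1/(1+\lambda)$.
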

\begin{proof}
Observe that Proposition \ref{charindicatrix} implies that  the translation of the indicatrix $\Sigma$ of $F$ by $w$ is the indicatrix of a pseudo-Minkowski norm in the points where the position vector is transversal to $\hat{\Sigma}=\Sigma+w$. Moreover, a vector $v+w\in \Sigma+w$ is transversal to $\Sigma+w$ if and only if $v+w$  is not tangent to $\Sigma$ in $v$, which is equivalent to the condition $g_v(v,v+w)\not=0$. Given an arbitrary vector $v\in A$, then $v/F(v)\in\Sigma$, and the above condition becomes $g_{v/F(v)}(v/F(v),v/F(v)+w)\not=0$, which is equivalent to $F(v)+g_v(w,v)\not=0$, by the homogeneity of $g_v$ with respect to $v$ and \eqref{df^2}, as required. The second part is an easy consequence of \eqref{sff}, since it implies that the index is preserved when you consider the second fundamental form with respect to a transversal vector with the same orientation. The condition $g_{v/Z(v)-w}(v/Z(v)-w,v)>0$ guarantees that $v/Z(v)-w$ and $v$ give the same orientation in the indicatrix $\Sigma$ (or $\Sigma+w$) and then $g$ and $h$ have the same index. When $g_{v/Z(v)-w}(v/Z(v)-w,v)<0$, then $v$ and $v/Z(v)-w$ determine opposite orientations for $\Sigma$, and the second fundamental form of the first one has index equal to $n+\mu-1$, being $\mu$ the index of the second fundamental form with respect to $v/Z(v)-w$. This concludes the proof.
\end{proof}
\subsection{Pseudo-Finsler metrics}
Given a connected manifold $M$ and  a conic open connected subset $A$ of $TM\setminus 0$ in the sense that  $A_p=A\cap T_pM$ is conic, connected  and non-empty  for every $p\in M$, we will say that a smooth function $F:A\rightarrow (0,+\infty)$ is a {\it (conic) pseudo-Finsler metric} if the restrictions $F_p=F|_{T_pM\cap A}$ are (conic) pseudo-Minkowski norms for every $p\in M$.
Let us distinguish two interesting families of pseudo-Finsler metrics. We say
that a pseudo-Finsler metric $F:A\rightarrow (0,+\infty)$ is
\begin{enumerate}[(i)]
\item {\it conic Finsler} if the fundamental tensor in \eqref{fundtensor} is positive definite for every $v\in A$,
\item {\it Lorentz-Finsler} if the fundamental tensor in \eqref{fundtensor} has index $n-1$.
\end{enumerate}
\noindent  These are the pseudo-Finsler metrics with strongly convex indicatrices (i.e., indicatrices have definite second fundamental form), a property which ensures they are the only ones whose geodesics (locally) minimize or maximize the length.


\begin{prop}
Given a manifold $M$, a smooth (connected) hypersurface $\Sigma$ of $TM$ determines a pseudo-Finsler metric on $M$ if and only if at every $v\in \Sigma$, it is transversal to $\{\lambda v: \lambda >0\}$, and $\Sigma_p=\Sigma\cap T_pM$ is an embedded (connected) hypersurface such that its second fundamental form with respect to the position vector  is non-degenerate.
\end{prop}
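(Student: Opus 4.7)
The proposition is simply the manifold-wise globalization of Proposition \ref{charindicatrix}, so the strategy is to reduce it to that fibrewise result together with the radial structure on $TM$.

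For the forward implication, suppose $F:A\rightarrow (0,+\infty)$ is a pseudo-Finsler metric with indicatrix $\Sigma=F^{-1}(1)$. Equation \eqref{df^2} gives $DF(v)(v)=F(v)=1$ for $v\in\Sigma$, so $1$ is a regular value of $F$ and $\Sigma$ is a smooth embedded hypersurface of $TM$. Restricted to each fiber, $\Sigma_p=\Sigma\cap T_pM$ is precisely the indicatrix of the pseudo-Minkowski norm $F_p$, so Proposition \ref{charindicatrix} hands us transversality of the position vector to $\Sigma_p$ and non-degeneracy of its second fundamental form. Finally, positive homogeneity of $F$ makes the map $\lambda\mapsto F(\lambda v)=\lambda F(v)$ strictly monotonic on each ray, so every ray $\{\lambda v:\lambda>0\}$ meets $\Sigma$ in exactly one point and transversally, which is the remaining transversality condition.

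For the backward implication, I would invert the radial parametrization. Define
\[
\Phi\colon (0,+\infty)\times \Sigma\longrightarrow TM\setminus 0,\qquad \Phi(t,v)=tv.
\]
This is smooth, and a quick computation shows that at $(t_0,v_0)$ the differential $D\Phi_{(t_0,v_0)}$ sends $\partial_t$ to the vertical vector $v_0\in T_{t_0v_0}(T_pM)$ and is an isomorphism from $T_{v_0}\Sigma$ onto $T_{t_0v_0}(t_0\Sigma)$. The intersection of this latter subspace with the vertical fiber is $T_{v_0}\Sigma_p$, which by hypothesis does not contain $v_0$; a dimension count ($1+(2n-1)=2n$) then gives that $D\Phi$ is an isomorphism, so $\Phi$ is a local diffeomorphism onto a conic open subset $A\subset TM\setminus 0$. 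On $A$ we define
\[
F:=\pi_1\circ\Phi^{-1},
\]
so that $F(tv)=t$ whenever $v\in\Sigma$ and $t>0$. By construction $F$ is smooth, takes values in $(0,+\infty)$, is positive homogeneous of degree one, and has $\Sigma$ as its indicatrix; restricting to each fiber, Proposition \ref{charindicatrix} and formula \eqref{sff} identify the fundamental tensor of $F_p$ at $v\in\Sigma_p$ with the second fundamental form of $\Sigma_p$ with respect to the position vector at $v$, which is non-degenerate by hypothesis. Hence $F$ is a conic pseudo-Finsler metric.

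The only real obstacle is that $\Phi$ is only guaranteed to be a \emph{local} diffeomorphism, since a priori a ray in $T_pM$ may cross $\Sigma_p$ several times. This is handled in the same spirit as the rest of the paper: one restricts to a connected piece, shrinking $\Sigma$ fibrewise if necessary so that each radial ray meets it at most once, and then $F$ is well-defined on a maximal conic connected open $A\subset TM\setminus 0$. Apart from this domain-of-definition bookkeeping, the argument is essentially a smooth-dependence-on-$p$ version of Proposition \ref{charindicatrix}.
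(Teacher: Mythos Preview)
Your approach coincides with the paper's: both define the radial map $\Phi(t,v)=tv$ (the paper calls it $\Psi$), show it is a local diffeomorphism, and then take $F$ to be the first component of its inverse; the forward implication via the regular-value theorem and Proposition~\ref{charindicatrix} is also the same.

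The difference is in how the global well-definedness of $F$ is secured. You correctly isolate the issue---$\Phi$ is, a priori, only a \emph{local} diffeomorphism because a ray might meet $\Sigma_p$ more than once---but your proposed fix (``shrinking $\Sigma$ fibrewise if necessary'') does not prove the proposition as stated: the claim is that the given hypersurface $\Sigma$ itself is the indicatrix of a pseudo-Finsler metric, not some connected open piece of it. If one is allowed to pass to a smaller $\Sigma$, the statement becomes much weaker and essentially local.

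The paper instead asserts that the transversality hypothesis together with the connectedness of $\Sigma$ force $\Psi$ to be \emph{injective}, hence a global diffeomorphism onto its image $A$, and refers to \cite[Proposition~2.10]{CJS14} for the argument. That is precisely the step you are missing: you need to argue that a connected embedded hypersurface of $TM$ transversal to every ray can meet each ray at most once, so that $\Phi^{-1}$---and therefore $F$---is single-valued on all of $A=\Phi((0,+\infty)\times\Sigma)$.
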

\begin{proof}
 It follows the same lines as \cite[Proposition 2.10]{CJS14}. Let us define $\Psi:(0,+\infty)\times \Sigma\rightarrow TM$, $(t,w)\rightarrow tw$. The transverality condition and the connectedness of $\Sigma$ imply that $\Psi$ is injective and a local diffeomorphism. Then it is a global diffeomorphism into the image, whose inverse can be written as
 $\Psi^{-1}:A\rightarrow (0,+\infty)\times \Sigma$, $v\rightarrow (F(v),v/F(v))$, where $A={\rm Im}(\Psi)$. Clearly, $F$ is a smooth positive homogeneous map and, by Propositon \ref{charindicatrix}, its fundamental tensor is non-degenerate, which  concludes the implication to the left. Conversely, if $F:A\rightarrow (0,+\infty)$ is a pseudo-Finsler metric, then \eqref{df^2} implies that $1$ is a regular value of $F^2:A\rightarrow (0,+\infty)$ and then $\Sigma=(F^2)^{-1}(1)$ is an embedded hypersurface of $TM$  transversal to $\{\lambda v: \lambda>0\}$ in $v\in \Sigma$.   Proposition \ref{charindicatrix} concludes the statement about the second fundamental form of $\Sigma_p$.
\end{proof}
\begin{prop}\label{goodTrans}
If $(M,F)$ is a pseudo-Finsler manifold and $W$ a smooth vector field on $M$, the transport of every $(T_pM,F_p)$ by $W_p$ in every  connected component of   $\hat{A}=\{v\in T_pM:F(v)+g_v(W,v)\not=0\}$   gives a pseudo-Finsler metric $\hat{F}$ determined by \eqref{Navrelation}.
\end{prop}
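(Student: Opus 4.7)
The plan is to reduce the statement to the preceding proposition that characterizes pseudo-Finsler metrics through their indicatrix hypersurface, combined with the pointwise translation result already established in Proposition \ref{translatingF}.

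First, I would note that the indicatrix $\Sigma=\{v\in A: F(v)=1\}$ of $F$ is a smooth embedded hypersurface of $TM$ by the preceding proposition, and that the translation map $T_W:\Sigma\to TM$ defined by $T_W(v)=v+W_{\pi(v)}$ (with $\pi:TM\to M$ the projection) is a smooth embedding, its inverse onto its image being $u\mapsto u-W_{\pi(u)}$. Consequently $\hat{\Sigma}:=T_W(\Sigma)$ is a smooth embedded hypersurface of $TM$, and inside each fiber $T_pM$ it restricts to the pointwise translation $\Sigma_p+W_p$ studied in Proposition \ref{translatingF}.

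Next, the preceding proposition requires two conditions: transversality of $\hat{\Sigma}$ to the radial directions $\{\lambda v:\lambda>0\}$ at every point of $\hat{\Sigma}$, and non-degeneracy of the fiber-wise second fundamental form with respect to the position vector. Both conditions are precisely what Proposition \ref{translatingF} verifies fiber by fiber: the set of points of $\hat{\Sigma}$ where transversality holds corresponds exactly to the locus $F(v)+g_v(W,v)\neq 0$, and the non-degeneracy of the second fundamental form on each fiber is given by that same proposition. Smoothness in the fiber parameter is automatic since $W$, $F$ and $g$ are smooth on $M$ and on $A$ respectively.

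Finally, I would define $\hat{A}$ as the set of $v\in TM$ such that $F(v)+g_v(W,v)\neq 0$ (an open subset of $TM$), and select fiberwise a connected component of $\hat{A}\cap T_pM$; by the preceding proposition applied to the corresponding hypersurface $\hat{\Sigma}\cap T_pM$, this yields a pseudo-Minkowski norm $\hat{F}_p$ on that component, fitting together smoothly in $p$ to define a pseudo-Finsler metric $\hat{F}:\hat{A}\to(0,+\infty)$. The relation \eqref{Navrelation} follows directly from the construction: $v/\hat{F}(v)\in\hat{\Sigma}$ means $v/\hat{F}(v)-W_p\in\Sigma$, i.e. $F(v/\hat{F}(v)-W)=1$.

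The only subtle point, and the main technical obstacle, is the bookkeeping of connected components across fibers: in general Proposition \ref{translatingF} allows for more than one translated indicatrix per fiber (straight versus reverse), so one must verify that a choice of fiberwise component varies continuously with $p$. This is handled by noting that $\hat{\Sigma}$ is globally a single smooth hypersurface of $TM$, so its fiberwise components patch into the connected components of $\hat{A}$ in $TM$, which is what the statement means by "every connected component of $\hat{A}$".
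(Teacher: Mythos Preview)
Your proposal is correct and follows essentially the same route as the paper: both reduce the global statement to the fiberwise result of Proposition~\ref{translatingF} via the observation that $v\mapsto v+W_{\pi(v)}$ is a fiber-bundle isomorphism of $TM$, so that transversality of $\hat{\Sigma}$ to the radial rays in $TM$ is equivalent to the fiberwise transversality already checked. The paper's proof is a two-line sketch of exactly this argument; your version spells out the same ingredients (embedding of $\hat{\Sigma}$, invocation of the indicatrix-characterization proposition, connected-component bookkeeping) in more detail.
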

\begin{proof}
It follows from Proposition \ref{translatingF} and the observation that the map $TM\rightarrow TM$, $v\rightarrow v+W$, is an isomorphism of fiber bundles and then $\Sigma+W$ is transversal to the position vector  in $TM$ if and only if $\Sigma_p+W_p$ is transversal to the position vector in $T_pM$ for every $p\in M$.
\end{proof}
\begin{defi}\label{CharacTrans}
Following Proposition \ref{translatingF}, we will say that the translation of $F$ by a smooth vector field $W$ is {\em straight} (resp. {\em reverse}) if  $F(v)+g_v(v,W)>0$ (resp. $F(v)+g_v(v,W)<0$)  for every $v\in A$.
\end{defi}
\begin{rem}
 Observe that definition of (conic) pseudo-Finsler metric can be generalized in two ways:
\begin{enumerate}
\item we can consider the case in that the fundamental tensor $g$ in \eqref{fundtensor} is degenerate as in \cite{JaSan11}. Along this paper we will need the non-degeneracy of the fundamental tensor in order to define the volume form in \S \ref{Matsumoto} or the Legendre transformation and the flag curvature in \S \ref{legendre} and \S \ref{sectionflagcurvature}.
\item  we also can consider the case in that the pseudo-Finsler metric is a positive homogeneous function $L:A\rightarrow \R$ of degree $2$. This is the most general case in that for example the Chern connection \cite[Remark 2.8]{JavSoa13} or the Legendre transformation can be defined. When $L\not=0$, we can recover a pseudo-Finsler metric as defined here by making $F=\sqrt{L}$, if $L>0$ or $F=\sqrt{-L}$ if $L<0$. However we cannot consider the region in that $L=0$. The reason is that when we translate the indicatrix $L=0$ we do not obtain the indicatrix of a new pseudo-Finsler metric. In fact, at every $p\in M$, the indicatrix ${\mathcal C}_p=\{v\in A_p:L(v)=0\}$ is a degenerate hypersurface of $T_pM$ with  degenerate directions given by rays contained in ${\mathcal C}_p$ which cross the zero vector of $T_pM$. When we translate ${\mathcal C}_p$, this property is lost.
\end{enumerate}
\end{rem}
\subsection{Navigation in semi-Riemannian metrics}

In general, it is not possible to compute explicitly the expression of the translation of a pseudo-Finsler metric, but when we consider a semi-Riemannian metric, the situation is simpler.
\begin{prop}\label{transZer}
Let $g$ be a semi-Riemannian metric in a manifold $M$ and $W$ a smooth vector field in $M$, and consider the conic pseudo-Finsler metric $F(v)=\sqrt{g(v,v)}$ defined in  $A=\{v\in TM:g(v,v)>0\}$. Let $h$ be the  (possibly degenerate) semi-Riemannian metric in $M$ determined by
\[h(v,v)=g(v,W)^2+g(v,v)(1-g(W,W)).\]
Then there are at most two translations of $F$  by a smooth vector field $W$, denoted by $Z_{1}$ and $Z_{-1}$, and given by
\begin{equation}\label{Zermelo1}
Z_\varepsilon(v)=\frac{g(v,W)-\varepsilon \sqrt{h(v,v)}}{g(W,W)-1}
\end{equation}
when $g(W,W)\not=1$ and alternatively by
\begin{equation}\label{Zermelo2}
Z_\varepsilon(v)=\frac{g(v,v)}{g(v,W)+\varepsilon \sqrt{h(v,v)}},
\end{equation}
when $g(v,v)\not=0$. They are defined in the subset $A_\varepsilon$, which is defined as
\[A_\varepsilon=
\{v\in TM: \varepsilon g(v,v)>0\}\cup \{ v\in TM : \varepsilon g(v,W)<0; h(v,v)>0\}\]
if $\varepsilon(1-g(W,W))>0$,
\[A_\varepsilon=\{ v\in TM : \varepsilon g(v,v)>0;\varepsilon g(v,W)>0; h(v,v)>0\}\]
if $\varepsilon(1-g(W,W))<0,$ and
\[A_\varepsilon=\{v\in TM: g(v,v)g(v,W)>0; \varepsilon g(v,W)>0\}\]
if $g(W,W)=1$. Moreover, $Z_1$ (resp. $Z_{-1}$) is a straight (resp. reverse)  translation of $F$.
\end{prop}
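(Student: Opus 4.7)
The plan is first to reduce the navigation equation to an algebraic one. Since $F(v)=\sqrt{g(v,v)}$ has fundamental tensor equal to $g$ itself, the defining relation \eqref{Navrelation}, squared and cleared of denominators, becomes
$$(g(W,W)-1)\,Z(v)^2-2g(v,W)\,Z(v)+g(v,v)=0.$$
This is a quadratic in $Z(v)$ whose discriminant is, up to a factor of $4$, the quantity $h(v,v)$ in the statement. For $g(W,W)\neq 1$ the quadratic formula produces the two roots in \eqref{Zermelo1}, parameterized by $\varepsilon=\pm 1$. The equivalent form \eqref{Zermelo2} then follows by multiplying numerator and denominator of \eqref{Zermelo1} by the conjugate $g(v,W)+\varepsilon\sqrt{h(v,v)}$ and using the identity $g(v,W)^2-h(v,v)=(g(W,W)-1)g(v,v)$. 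In the degenerate case $g(W,W)=1$ the quadratic collapses to the linear equation $2g(v,W)Z(v)=g(v,v)$, and one checks that \eqref{Zermelo2} still returns this unique solution, while the remaining choice of $\varepsilon$ produces a vanishing denominator.

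Next I would identify on which set each $Z_\varepsilon$ genuinely defines a pseudo-Minkowski norm. Three conditions must hold: $h(v,v)>0$ so that the square root is real and nonzero; $Z_\varepsilon(v)>0$ so that it is actually a norm; and the transversality condition $F(v)+g_v(v,W)\neq 0$ coming from Proposition \ref{goodTrans}, which ensures the translation of the indicatrix is well-defined. Splitting into the three cases $g(W,W)<1$, $g(W,W)>1$ and $g(W,W)=1$, and reading off signs from either \eqref{Zermelo1} or \eqref{Zermelo2} (whichever is more convenient in each case), one translates the positivity of $Z_\varepsilon(v)$ into explicit inequalities involving $g(v,v)$ and $g(v,W)$. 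This routine bookkeeping reproduces exactly the three descriptions of $A_\varepsilon$ stated in the proposition.

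Finally, the straight/reverse dichotomy is decided by Proposition \ref{translatingF} via the sign of $g_{v/Z_\varepsilon(v)-W}(v/Z_\varepsilon(v)-W,v)$. Because the fundamental tensor of $F$ coincides with $g$, this expression reduces to $g(v,v)/Z_\varepsilon(v)-g(v,W)$, which by \eqref{Zermelo2} equals $\varepsilon\sqrt{h(v,v)}$. Hence $Z_1$ is a straight translation and $Z_{-1}$ a reverse one.

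The main obstacle is organising the case analysis for $A_\varepsilon$ cleanly: the shape of the region changes with the sign of $\varepsilon(1-g(W,W))$, and one must notice that when this sign is positive the condition $\{\varepsilon g(v,v)>0\}$ alone already implies both $h(v,v)>0$ and the correct sign of the numerator of \eqref{Zermelo1}, so the extra component of $A_\varepsilon$ only collects vectors where $g(v,v)$ has the opposite sign but $g(v,W)$ compensates. Keeping track of exactly which inequalities imply which, without redundancy, is where most of the effort goes.
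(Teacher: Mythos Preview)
Your proposal is correct and follows essentially the same route as the paper: reduce the navigation relation to the quadratic $(g(W,W)-1)Z^2-2g(v,W)Z+g(v,v)=0$, solve it and pass to the conjugate form, determine $A_\varepsilon$ by positivity of $Z_\varepsilon$ together with $h(v,v)>0$, and identify the straight/reverse nature by computing $g(v/Z_\varepsilon(v)-W,v)=\varepsilon\sqrt{h(v,v)}$. The only cosmetic difference is that the paper phrases the exclusion of $h(v,v)=0$ in terms of the failure of transversality of $\Sigma+W$ to the position vector rather than the vanishing of the square root, but this is the same condition.
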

\begin{proof}
The Zermelo metric $Z$ associated to $F$ is determined by
\[g\left(\frac{v}{Z(v)}-W,\frac{v}{Z(v)}-W\right)=1,\]
(see \eqref{Navrelation}). Solving a quadratic equation we get that
\begin{equation}\label{ZermeloExp1}
Z_\varepsilon(v)=\frac{g(v,W)-\varepsilon \sqrt{g(v,W)^2+g(v,v)(1-g(W,W))}}{g(W,W)-1}.
\end{equation}
As this expression is not defined when $g(W,W)=1$, we will obtain an alternative expression by conjugation:
\begin{equation}\label{ZermeloExp2}
Z_\varepsilon(v)=\frac{g(v,v)}{g(v,W)+\varepsilon\sqrt{g(v,W)^2+g(v,v)(1-g(W,W))}}.
\end{equation}
Both metrics are defined in the vectors $v$ that make $Z_\varepsilon(v)$ positive. Observing that
\[Z_\varepsilon(v)=\frac{\varepsilon g(v,W)- \sqrt{h(v,v)}}{\varepsilon(g(W,W)-1)}=\frac{\varepsilon g(v,v)}{\varepsilon g(v,W)+ \sqrt{h(v,v)}},\]
we deduce easily that the biggest open subset where it can be defined is $A_\varepsilon$ (namely, distinguish between $\varepsilon g(v,v)<0$ or $\varepsilon g(v,v)\geq 0$ to get all the cases). Now observe that $g(v/Z_\varepsilon(v)-W,v)=\varepsilon \sqrt{h(v,v)}$, which identifies $Z_1$ as a straight translation of $F$ and $Z_{-1}$ as a reverse translation (see Definition \ref{CharacTrans} and Proposition \ref{translatingF}). Finally, observe that in the vectors where $h(v,v)=0$, even if sometimes $Z_\varepsilon(v)$ is positive, the translation is not well-defined because $\Sigma+W$ is not transverse to the position vector.
\end{proof}


Let us consider now Randers and Kropina metrics constructed using a semi-Riemannian metric $h$ and a one-form $\beta$, which in the following will be called respectively, pseudo-Randers and pseudo-Kropina metrics.
Namely, a pseudo-Randers metric is defined as
\begin{equation}\label{pseudo-Randers}
R_\epsilon(v)=\epsilon \sqrt{h(v,v)}+\beta(v)
\end{equation}
for $v\in A=\{v\in TM: h(v,v)>0; \epsilon\sqrt{h(v,v)}+\beta(v)>0\}$ and $\epsilon=-1,1$. Observe that in this definition we include the case in that $h$ is Riemannian but the norm of $\beta$ in $h$ is bigger than one. In such a case, there exists some $v\in TM$ such that $\sqrt{h(v,v)}+\beta(v)<0$, but for these vectors we can consider $R(v)=-\sqrt{h(v,v)}-\beta(v)$.

Moreover, we will define a pseudo-Kropina metric as
\begin{equation}\label{pseudo-Kropina}
K(v)=\frac{h(v,v)}{\beta(v)},
\end{equation}
where $v\in A=\{v\in TM: h(v,v)\beta(v)>0\}$. In both cases, we will denote by $B$ the vector field $h$-equivalent to $\beta$. Moreover, we will say that the pseudo-Randers (resp. pseudo-Kropina) metric is non-degenerate if $h(B,B)\not=1$ (resp. $h(B,B)\not=0$) at every point. It is clear that pseudo-Randers and pseudo-Kropina metrics are positive homogeneous, we will see below that the fundamental tensor is non-degenerate up to some exceptional cases (when $h(B,B)=1$ in the pseudo-Randers case and $h(B,B)=0$ in the pseudo-Kropina case).
\begin{defi}\label{randers-kro-defi}
We say that a pseudo-Finsler metric is pseudo-Randers-Kropina if at every point it can be expressed as \eqref{pseudo-Randers} with $h(B,B)\not=1$ or as in \eqref{pseudo-Kropina} with $h(B,B)\not=0$.
\end{defi}
 Observe that the semi-Riemannian metric $h$ does not to be defined in the whole manifold. Indeed,  the Zermelo metric obtained as the translation of a semi-Riemannian metric (see \eqref{Zermelo1} and \eqref{Zermelo2}) is always pseudo-Randers or pseudo-Kropina at every point, but the
 semi-Riemannian metrics used for the definition are not defined in the whole manifold. Let us see that the converse holds.
In the following we will understand that the indicatrix of a semi-Riemannian metric $g$ in a manifold $M$  is the subset $\Sigma=\{v\in TM: g(v,v)=1\}$.
\begin{prop}\label{Randers-Kro}
The family of pseudo-Randers-Kropina metrics coincides
with the family of Zermelo metrics obtained translating the indicatrix of a semi-Riemannian metric. More precisely:
\begin{enumerate}[(i)]
\item a pseudo-Randers metric as in \eqref{pseudo-Randers} is the translation by the vector $W=-B/\delta$ of the semi-Riemannian metric $g$ determined by
\begin{equation}\label{gdeh}
g(v,v)=\delta (h(v,v)-h(B,v)^2)
\end{equation}
for $v\in TM$, where  $\delta=1-h(B,B)$. Moreover,
\begin{enumerate}[(a)]
\item  if $\epsilon \delta>0$ (resp. $\epsilon \delta<0$) $R_\epsilon$ is a straight (resp. reverse) translation of $g$
\item  if $\epsilon=1$ (resp. $\epsilon=-1$),  the index of the fundamental tensor of $R_1$
(resp. $R_{-1}$) coincides with the index of $h$ (resp. is equal to $n-1-{\rm ind}(h)$).
\end{enumerate}
\item  a pseudo-Kropina metric as in \eqref{pseudo-Kropina} is the translation of the semi-Riemannian metric $g=4 h/ h(B,B)$ by the vector field $W=\frac{1}{2} B$. If $h(B,B)h(v,v)>0$ (resp. $h(B,B)h(v,v)<0$), then $K$ is a straight (resp. reverse) translation and the index of the fundamental tensor of $K$ is equal to the index of $g$ (resp. $n-1-{\rm ind}(g)$).
\end{enumerate}
\end{prop}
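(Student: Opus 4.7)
The plan is to verify both assertions by direct calculation, relying on the characterization of translations via the Navigation relation \eqref{Navrelation} together with Proposition \ref{translatingF} to extract the straight/reverse dichotomy and the index information.

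For part (i), I would first polarize the defining formula to obtain $g(v,w)=\delta\bigl(h(v,w)-h(B,v)h(B,w)\bigr)$, and set $u:=v/R_\epsilon(v)-W=v/R_\epsilon(v)+B/\delta$. Expanding $g(u,u)$ and using $\delta=1-h(B,B)$, the equation $g(u,u)=1$ collapses to $(R_\epsilon-\beta(v))^2=h(v,v)$, i.e. $R_\epsilon=\beta(v)+\epsilon\sqrt{h(v,v)}$, which is precisely \eqref{pseudo-Randers}; this confirms the translation identity. A parallel expansion of $g(u,v)$ simplifies, after using the same formula for $R_\epsilon$, to $g(u,v)=\epsilon\delta\sqrt{h(v,v)}$. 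By Definition \ref{CharacTrans} and Proposition \ref{translatingF} this gives part (a) immediately.

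The main obstacle is part (b), which requires knowing the index of the auxiliary tensor $g$ in terms of the index of $h$. I would proceed by choosing an $h$-orthogonal basis with $e_1$ parallel to $B$ (when $B\neq 0$), so that the rank-one correction $h(B,\cdot)^2$ is supported on $e_1$ alone. A short case analysis on $\mathrm{sgn}(h(B,B))$ and on whether $|h(B,B)|\lessgtr 1$ shows that the auxiliary form $\tilde g(v,w):=h(v,w)-h(B,v)h(B,w)$ has index equal to $\mathrm{ind}(h)$ when $\delta>0$ and equal to $1+\mathrm{ind}(h)$ when $\delta<0$ (in the latter case one necessarily has $h(B,B)>1$, so $e_1$ is $h$-spacelike and $\tilde g(e_1,e_1)<0$). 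Multiplying by the scalar $\delta$ then gives $\mathrm{ind}(g)=\mathrm{ind}(h)$ if $\delta>0$ and $\mathrm{ind}(g)=n-1-\mathrm{ind}(h)$ if $\delta<0$. Combining this with part (a) and Proposition \ref{translatingF}, the four sub-cases $(\epsilon,\mathrm{sgn}\,\delta)$ all match the claim that $\mathrm{ind}(R_1)=\mathrm{ind}(h)$ and $\mathrm{ind}(R_{-1})=n-1-\mathrm{ind}(h)$.

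For part (ii) the calculation is cleaner. With $u=v/K(v)-B/2$ and using $h(v,v)/K(v)=\beta(v)$ (which follows from the very definition \eqref{pseudo-Kropina}), a direct expansion yields $h(u,u)=h(B,B)/4$, so $g(u,u)=(4/h(B,B))\cdot h(u,u)=1$, giving the translation identity. Similarly $g(u,v)=2\beta(v)/h(B,B)$; since vectors in the domain $A$ satisfy $h(v,v)\beta(v)>0$, the sign of $g(u,v)$ agrees with the sign of $h(B,B)h(v,v)$, yielding the straight/reverse dichotomy via Proposition \ref{translatingF}. The index claim is then trivial because $g=(4/h(B,B))h$, so $\mathrm{ind}(g)=\mathrm{ind}(h)$ if $h(B,B)>0$ and $\mathrm{ind}(g)=n-\mathrm{ind}(h)$ otherwise, and one applies Proposition \ref{translatingF} one last time.
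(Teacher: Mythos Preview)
Your argument is correct and follows essentially the same route as the paper: you verify the navigation relation \eqref{Navrelation} directly (the paper instead substitutes into the explicit Zermelo formulas of Proposition~\ref{transZer}, which is equivalent), and your index computation for $g$ versus $h$ via the splitting along $B$ is exactly the paper's ``three facts'' argument in different words. Two small points: you do not address the reverse inclusion (that every translation of a semi-Riemannian metric is pseudo-Randers-Kropina in the sense of Definition~\ref{randers-kro-defi}, i.e.\ with the required nondegeneracy $h(B,B)\neq 1$ or $h(B,B)\neq 0$), which the paper handles in a short opening paragraph; and your basis argument for ${\rm ind}(g)$ tacitly assumes $h(B,B)\neq 0$, though the lightlike case follows by a continuity/determinant remark.
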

\begin{proof}
It is trivial that every Zermelo metric as in \eqref{ZermeloExp2} (or \eqref{ZermeloExp1}) is pseudo-Randers or
pseudo-Kropina. Moreover, if $g(W,W)\not=1$, then the expression in \eqref{ZermeloExp1} is pseudo-Randers with
\[h(v,v)=\frac{1}{|\alpha|^2}g(v,W)^2+\frac{\alpha}{|\alpha|^2}g(v,v)\]
and $\beta(v)=-\frac{1}{\alpha}g(v,W)$, where $\alpha=1-g(W,W)$. Using both relations we deduce that $h(v,W)=-\frac{\alpha}{|\alpha|^2}h(v,B)$ for every $v\in TM$ and then $B=-\frac{|\alpha|^2}{\alpha}W$. It follows in particular that $h(B,B)=g(W,W)$ and $1-h(B,B)=\alpha\not=0$. When $g(W,W)=1$, the expression in \eqref{ZermeloExp2} shows that $Z_\varepsilon$ is pseudo-Kropina with $h=g$ and $\beta(v)=2g(v,W)$, and then $B=2 W$, which implies that $h(B,B)=4 g(W,W)=4\not=0$.

 Consider now a pseudo-Randers metric as in \eqref{pseudo-Randers}  with $h(B,B)\not=1$ and let $g$ be the metric given in \eqref{gdeh}. Then a straightforward computation shows that
\[1-g(W,W)=\delta\not=0,\quad\frac{g(v,W)}{g(W,W)-1}=h(v,B),\]
and
\[g(v,W)^2-g(v,v)(g(W,W)-1)=\delta^2 h(v,v),\]
thus, $R$ is
one of the pseudo-Randers metrics obtained translating the indicatrix of $g$ with $W$ given in
\eqref{ZermeloExp1}. In fact, observe that the sign of $\epsilon \delta$ must coincide with the sign of $\varepsilon$ in 	Proposition \ref{transZer}, which implies the statements about the character of the translation (straight or reverse) in part $(a)$. Moreover, observe that the index of $g$ coincides with the index of $h$ if $\delta>0$ and ${\rm ind}(g)=n-1-{\rm ind}(h)$ if $\delta<0$.  This follows easily from the following three facts (1) $g(B,B)=\delta^2 h(B,B)$, (2) the $g$-orthogonal space to $B$ coincides with the $h$-orthogonal space to $B$ because $g(v,B)=\delta^2 h(v,B)$ and (3) if $v,w$ belong to the $g$-orthogonal space to $B$, $g(v,w)=\delta h(v,w)$. This, together with Proposition \ref{translatingF} and part $(a)$, gives part $(b)$.

For part $(ii)$, the indicatrix of a pseudo-Kropina metric with $h(B,B)\not=0$ is given by the solutions of
\[h(v,v)=h(v,B)\]
or equivalently
\[4h(v-B/2,v-B/2)/h(B,B)=1,\]
which means that $K$ is the translation of $g$ by $W=B/2$. The statements about the character of the translation and the index of the fundamental tensor of $K$ follow easily from Propositions \ref{transZer} and \ref{translatingF}, respectively, since the sign of $\varepsilon$ in Proposition \ref{transZer} coincides with the sign of $h(B,B)h(v,v)$.
\end{proof}

\section{Zermelo navigation and Matsumoto tensor}\label{Matsumoto}

 In this section we give the definition of the Matsumoto tensor of a (conic) pseudo-Finsler metric. 
The main goal will be to prove Theorem \ref{thmmatsumoto}.
 For the sake of completeness we have included a short review of some concepts from affine differential geometry, following \cite{NoSa94}.

\subsection{The Matsumoto Tensor of a (conic) pseudo-Finsler metric}
 Let us first restric our  attention to a single vector space $V$.
The Cartan tensor of a (conic) pseudo-Minkowski norm $F:A\subseteq V\setminus\{0\}\rightarrow (0,+\infty)$ is defined as
\[C_v(X,Y,Z)=\frac 14 \left.\frac{\partial^3}{\partial t_3\partial t_2\partial t_1}
F(v+t_1X+t_2Y+t_3Z)^2\right|_{t_1=t_2=t_3=0}\]
for any $v\in A$ and $X,Y,Z\in V$. The homogeneity of $F$ implies that $C$ is completely symmetric, $C_v(v,\cdot,\cdot)=0$, and $C_{\lambda v}=(1/\lambda)C_v$ if $\lambda>0$. The $g$-metric contraction of the Cartan tensor is called the {\it mean Cartan torsion}, and denoted by $I$; namely, if $b_1,b_2,\ldots,b_n$ is a basis of $V$, and we denote $g_{ij}=g_v(b_i,b_j)$, and $g^{ij}$, the coefficients of the inverse matrix of $\{g_{ij}\}_{i,j=1,\dots,n}$, then
\[I_v(X)=\sum_{i,j=1}^n g^{ij}C_v(X,b_i,b_j)\]
for every $X\in V$.
\\ Let us recall that the {\it angular metric} of $F$ is defined as
\[h_v(X,Y)=g_v(X,Y)-\frac{1}{F(v)^2}g_v(v,X)g_v(v,Y),\]
for every $v\in A$, and $X,Y\in V$. Note that $h_v$ and $g_v$ coincide on $T_v\Sigma$, for $v\in\Sigma$.
\begin{defi}\label{defMat}
Given $v\in A$, the {\it Matsumoto tensor} of $F$ is the symmetric tri-tensor ${\rm M}_v:V\times V\times V \rightarrow \R$ given by
\begin{multline*}
{\rm M}_v(X,Y,Z)=C_v(X,Y,Z)\\
-\frac{\varepsilon}{n+1}\left(I_v(X) h_v(Y,Z)+I_v(Y) h_v(X,Z)+I_v(Z) h_v(X,Y)\right),
\end{multline*}
where $\varepsilon=1$ or $-1$, according to  the index of $g$ is even or odd,  respectively.
\end{defi}
\noindent As ${\rm M}_v(v,\cdot,\cdot)=0$ and ${\rm M}_{\lambda v}=(1/\lambda) {\rm M}_v$, it follows that the tensor ${\rm M}$ contains the same information as its pull-back to
$\Sigma$, which we still denote by ${\rm M}$.
\\\\
\noindent  If now $(M,F)$ is a (conic) pseudo-Finsler manifold, its Matsumoto tensor ${\rm M}$ is defined by just performing the above construction in each tangent space.  


\begin{rem}\label{conicLorentzMat}
 It is interesting to point out that it is possible to describe all the conic Finsler and Lorentz-Finsler metrics with trivial Matsumoto tensor using Theorem \ref{thmmatsumoto} and Propositions \ref{transZer} and \ref{Randers-Kro}:
\begin{enumerate}
\item  In the conic Finsler case, these are those in \eqref{Zermelo2} with $g$ a Riemannian metric and $\varepsilon=1$ or $g$ a Lorentzian metric and $\varepsilon=-1$. Moreover, at every point they can be expressed as in \eqref{pseudo-Randers} with $\epsilon=1$ and $h$ a Riemannian metric, or $\epsilon=-1$ and $h$ a Lorentzian metric, or as in \eqref{pseudo-Kropina} with $h$ Riemannian.

\item  In the Lorentz-Finsler case, these are those in \eqref{Zermelo2} with $g$ a Lorentzian metric and $\varepsilon=1$ or $g$ a Riemannian metric and $\varepsilon=-1$. Moreover, at every point they can be expressed as in \eqref{pseudo-Randers} with $\epsilon=1$ and $h$ a Lorentzian metric, or $\epsilon=-1$ and $h$ a Riemannian metric, or as in \eqref{pseudo-Kropina} with $h$ Lorentzian.
\end{enumerate}
\end{rem}

\subsection{Affine differential geometry of the indicatrix}
 Throughout this section,  let be fixed a vector space $V$, a parallel volume form $\Omega$ in $V$, and the orientation that it induces in $V$.
 \par We now recall some concepts of affine differential geometry for a connected hypersurface $S\subset V$, following \cite{NoSa94}.
 The choice of an everywhere transverse vector field $\xi$ to $S$ induces in $S$ a connection $\nabla^\xi$, a second fundamental form $h_\xi$,
 a volume form $\theta^\xi$, and an orientation via the relations
 $$\nabla_X Y=\nabla_X^\xi Y+h^\xi(X,Y)\xi,~~\theta^\xi(X_1,\cdots,X_{n-1})=\Omega(X_1,\cdots,X_{n-1},\xi),$$
 where $\nabla$ is the canonical connection of $V$, and the orientation in $S$ is the induced by $\theta^\xi$.

\begin{defi}
The pair $(S,\xi)$, or simply $\xi$, is said to be {\it equiaffine} if $\theta^\xi$ is $\nabla^\xi$-parallel; this, in turn, is equivalent to $\nabla_X\xi$ be tangent to $S$ for every $X$ tangent to $S$. If $\xi$ is equiaffine, the tri-tensor field $\mathcal{C}^\xi$ in $S$ given by
$$\mathcal{C}^\xi(X,Y,Z)~=~(\nabla_X^\xi h^\xi)(Y,Z)$$
is completely symmetric and is called the associated {\it cubic form}.
\end{defi}
Under a change
\begin{equation}\label{change}
\overline{\xi}~=~\phi\xi+P,
\end{equation}
where $\phi$ and $P$ are, respectively, a nowhere null smooth function and a vector field on $S$, we have the following:
\begin{lemma}\label{lemmachange}
If $\xi$ is equiaffine, then a change (\ref{change}) produces another equiaffine transverse vector field if, and only if, $h^\xi(P,~\cdot~)=-d\phi$.  In this case, the corresponding cubic forms are related by
$$\phi\mathcal{C}^{\overline{\xi}}(X,Y,Z)=\mathcal{C}^\xi(X,Y,Z)-\frac{1}{\phi}\bigl(X(\phi)h^\xi(Y,Z)+Y(\phi)h^\xi(Z,X)+
Z(\phi)h^\xi(X,Y)\bigr)$$
\end{lemma}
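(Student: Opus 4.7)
The plan is to exploit the fact that both $\xi$ and $\overline{\xi}$ are transverse to $S$, so the Gauss-type decomposition $\nabla_X Y = \nabla_X^{\xi} Y + h^\xi(X,Y)\xi$ can be written in two ways; comparing them yields explicit formulas for the change of $\nabla^\xi$ and $h^\xi$ under $\overline{\xi} = \phi\xi + P$. From $\xi = (\overline{\xi}-P)/\phi$ and $\nabla_X P = \nabla_X^\xi P + h^\xi(X,P)\xi$ (since $P$ is tangent to $S$), substituting into the Gauss formula with respect to $\xi$ and regrouping along the direct sum $V = TS \oplus \mathbb{R}\overline{\xi}$ gives
\begin{equation*}
h^{\overline{\xi}}(X,Y) = \frac{1}{\phi}\,h^\xi(X,Y), \qquad \nabla_X^{\overline{\xi}} Y = \nabla_X^\xi Y - \frac{h^\xi(X,Y)}{\phi}\,P.
\end{equation*}

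The equiaffine criterion then follows by computing $\nabla_X \overline{\xi}$. Expanding $\nabla_X(\phi\xi + P) = X(\phi)\xi + \phi\nabla_X\xi + \nabla_X^\xi P + h^\xi(X,P)\xi$ and using that $\nabla_X\xi$ is already tangent to $S$ by hypothesis, the $\xi$-component is $X(\phi)+h^\xi(X,P)$. Since the transverse component with respect to $\overline{\xi}$ is obtained by dividing this quantity by $\phi$, the vector $\nabla_X\overline{\xi}$ is tangent to $S$ for every $X \in TS$ if and only if $h^\xi(X,P) = -X(\phi)$, i.e. $h^\xi(P,\,\cdot\,) = -d\phi$.

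For the cubic form I would simply apply the definition $\mathcal{C}^{\overline{\xi}}(X,Y,Z) = X(h^{\overline{\xi}}(Y,Z)) - h^{\overline{\xi}}(\nabla_X^{\overline{\xi}} Y, Z) - h^{\overline{\xi}}(Y, \nabla_X^{\overline{\xi}} Z)$ and substitute the two transformation formulas obtained above. The term $X(h^{\overline{\xi}}(Y,Z)) = X(h^\xi(Y,Z)/\phi)$ produces $\mathcal{C}^\xi(X,Y,Z)/\phi$ together with a stray $-X(\phi)h^\xi(Y,Z)/\phi^2$, while the corrections coming from $\nabla_X^{\overline{\xi}} - \nabla_X^\xi$ contribute two extra terms of the form $h^\xi(X,Y)h^\xi(P,Z)/\phi^2$ and $h^\xi(X,Z)h^\xi(P,Y)/\phi^2$. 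Invoking the equiaffine condition $h^\xi(P,\,\cdot\,) = -d\phi$ to rewrite $h^\xi(P,Y) = -Y(\phi)$ and $h^\xi(P,Z) = -Z(\phi)$, and multiplying through by $\phi$, gives exactly the claimed identity.

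The step I expect to require the most care is the bookkeeping in this last expansion: the three symmetric terms $X(\phi)h^\xi(Y,Z)$, $Y(\phi)h^\xi(Z,X)$, $Z(\phi)h^\xi(X,Y)$ arise from two logically distinct sources (the derivative of $1/\phi$ versus the $P$-correction in $\nabla^{\overline{\xi}}$), and checking the signs combine correctly is the only genuine subtlety. No other technical obstacle is anticipated, as the argument is purely algebraic in one tangent space.
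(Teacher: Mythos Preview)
Your proposal is correct and is essentially the standard derivation of the transformation formulas for the induced connection, second fundamental form, and cubic form under a change of transverse field; the paper itself does not write this out but simply cites \cite[Proposition~2.5]{NoSa94}, of which your computation is precisely the content. Your sign bookkeeping in the cubic-form step checks out, so there is nothing further to add.
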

\begin{proof}
An immediate consequence of the formulas in \cite[Proposition 2.5]{NoSa94}.
\end{proof}

Suppose now that $S$ is {\it non-degenerate}, that is, it has everywhere non-degenerate second fundamental form with respect to some (and hence {\it all}) transverse vector field $\xi$. In this case, $h^\xi$ induces a volume form $\omega_{h^\xi}$ in $S$ by
$$\omega_{h^\xi}(X_1,\cdots,X_{n-1})~=~|{\rm det}[h^\xi(X_i,X_j)]|^{1/2}$$
if $X_1,\ldots,X_{n-1}$ is a positive basis of $T_vS$.

\begin{prop}
If $S\subset V$ is a non-degenerate connected hypersurface, there is, up to sign, only one transverse vector field $\xi$ such that
\begin{enumerate}[(i)]
\item $\xi$ is equiaffine;
\item The volume forms $\theta^\xi$ and $\omega_{h^\xi}$ coincide.
\end{enumerate}
Any  of these is called the {\rm Blaschke normal field} of $S\subset V$.
\end{prop}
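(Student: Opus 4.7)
The plan is to start with any smooth transverse vector field $\xi_0$ to $S$, first massage it into an equiaffine field, and then invoke Lemma \ref{lemmachange} to select inside that family the unique pair that also satisfies condition (ii).

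For the first step, a direct computation of $\nabla_X(\xi_0+P)$ for a tangent vector field $P$ on $S$ shows that $\xi_0+P$ is equiaffine if and only if
\[
h^{\xi_0}(P,X)~=~-\tau^{\xi_0}(X)\qquad\text{for all }X\in T_vS,
\]
where $\tau^{\xi_0}$ is the transversal connection $1$-form of $\xi_0$, characterised by $\nabla_X\xi_0=-S^{\xi_0}(X)+\tau^{\xi_0}(X)\,\xi_0$. Since $h^{\xi_0}$ is non-degenerate by the hypothesis on $S$, this linear equation has a unique solution $P$, producing an equiaffine transverse field $\xi:=\xi_0+P$ on $S$.

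For the second step, I would rewrite the candidate Blaschke field as $\bar\xi=\phi\,\xi+Q$, with $\phi$ nowhere vanishing and $Q$ tangent. By Lemma \ref{lemmachange}, $\bar\xi$ is then equiaffine if and only if $Q$ is the unique tangent vector field with $h^\xi(Q,\cdot)=-d\phi$, so everything is reduced to the determination of $\phi$. Next, the transformation laws $h^{\bar\xi}=(1/\phi)\,h^\xi$ (obtained by comparing the two Gauss decompositions of $\nabla_X Y$ along $\xi$ and along $\bar\xi$) and $\theta^{\bar\xi}=\phi\,\theta^\xi$ (from tangency of $Q$ together with the definition of $\theta^{\bar\xi}$) give
\[
\omega_{h^{\bar\xi}}~=~|\phi|^{-(n-1)/2}\,\omega_{h^\xi}
\]
in the orientation induced by $\bar\xi$. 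Condition (ii) then collapses to the single pointwise scalar equation
\[
|\phi|^{(n+1)/2}~=~\omega_{h^\xi}\big/\theta^\xi,
\]
whose right-hand side is a positive smooth function on $S$. Thus $|\phi|$ is uniquely determined, $\phi$ is determined up to a global sign, and $Q$ is then prescribed by the equiaffine equation above, producing a Blaschke field unique up to sign.

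The main technical nuisance is the careful tracking of orientations in the comparison between $\theta^{\bar\xi}$ and $\omega_{h^{\bar\xi}}$, which is precisely what forces the sign ambiguity rather than producing a genuinely non-unique Blaschke normal; a secondary point is the derivation of the transformation of $\tau$ under a transverse change that is not yet assumed equiaffine, needed to justify the first step. Once these transformation formulae for $h$, $\theta$ and $\omega_h$ under a transverse change are in hand, the argument reduces to pointwise algebra on $S$ with no further analytic subtlety.
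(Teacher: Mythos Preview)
Your proposal is correct and follows the standard approach the paper cites from \cite[page 41]{NoSa94}: the paper does not prove this proposition itself but simply records that, starting from any equiaffine transverse field $\xi$, the Blaschke normal is obtained by the change $\bar\xi=\phi\xi+P$ with $|\phi|=|{\rm det}_{\theta^\xi}h^\xi|^{1/(n+1)}$ and $h^\xi(P,\cdot)=-d\phi$, which is exactly the equation your computation $|\phi|^{(n+1)/2}=\omega_{h^\xi}/\theta^\xi$ produces. Your preliminary step (manufacturing an equiaffine field from an arbitrary transverse one by solving $h^{\xi_0}(P,\cdot)=-\tau^{\xi_0}$) is a small but welcome addition that the paper's reference takes for granted.
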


Indeed, as shown in \cite[page 41]{NoSa94}, starting with any equiaffine transverse vector field $\xi$, the change (\ref{change}) will produce
the Blaschke normal fields if, and only if,
\begin{equation}\label{formula}
|\phi|~=~|{\rm det}_{\theta^\xi}h^\xi|^\frac{1}{n+1}~,~~~h^\xi(P,~\cdot~)~=~-d\phi.
\end{equation}
Here, the value of ${\rm det}_{\theta^\xi}h^\xi$ at $v\in S$ is ${\rm det}[h^\xi(X_i,X_j)]$, where $X_1,\ldots,X_{n-1}\in T_v S$ are any vectors such that
$\theta^\xi(X_1,\ldots,X_{n-1})=1$.
\par We now apply these considerations to the case where $S$ is the indicatrix $\Sigma$ of a (conic) pseudo-Minkowski norm $F$. Let $\xi$ be the opposite of the position vector field. In
this case,
\begin{enumerate}
\item[(I)] $\xi$ is equiaffine.
\item[(II)] $h^\xi=h$, where $h$ is (the pull-back of) the angular metric; this follows from (\ref{sff}).
\item[(III)] The cubic form $\mathcal{C}^\xi$ is twice the (pull-back of) Cartan tensor $C$:
\begin{eqnarray}
2C_v(X,Y,Z) & = &  \left.\frac{\partial}{\partial t}
g_{v+tX}(Y,Z)\right|_{t=0}=(\nabla_Xg)(Y,Z)=(\nabla^\xi_Xh^\xi)(Y,Z)\nonumber\\
& = & \mathcal{C}^\xi_v(X,Y,Z),\nonumber
\end{eqnarray}
where $v\in\Sigma$ and  $X,Y,Z\in T_v\Sigma$.
\end{enumerate}

\subsection{Proof of Theorem \ref{thmmatsumoto}} Let $\omega_g$ be the volume form in $A$ induced by the semi-Riemannian metric $g$, and let $\psi:A\rightarrow\R$ be the function such that $\omega_g=\psi\Omega$. Note that $\psi>0$ as we are considering the orientation in $V$ induced by $\Omega$.
\begin{prop}
The cubic form $\mathcal{C}$ of $\Sigma$ associated to (one of) the Blaschke normal field is equal to $\frac{2}{\phi}{\rm M}$, where ${\rm M}$ is the (pull-back of) Matsumoto tensor of $F$, and $\phi=\psi^\frac{2}{n+1}$.
\end{prop}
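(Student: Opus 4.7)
The plan is to find the Blaschke normal field of $\Sigma$ as a specific modification $\bar{\xi}=\phi\xi+P$ of the transverse vector field $\xi=-v$ already analyzed (for which properties (I)--(III) give $\mathcal{C}^\xi=2C$ and $h^\xi=h$), and then read off the cubic form $\mathcal{C}^{\bar\xi}$ via Lemma \ref{lemmachange}.

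First, I would compute $\det_{\theta^\xi}h^\xi$ in terms of $\psi$. Pick $v\in\Sigma$ and a basis $X_1,\dots,X_{n-1}$ of $T_v\Sigma$ with $\theta^\xi(X_1,\dots,X_{n-1})=1$, which (since $\xi=-v$) is the same as $\Omega(X_1,\dots,X_{n-1},v)=-1$. Because $g_v(v,X_i)=0$ and $g_v(v,v)=F(v)^2=1$, the Gram matrix of $(X_1,\dots,X_{n-1},v)$ is block-diagonal with blocks $[h(X_i,X_j)]$ and $[1]$, so from $\omega_g=\psi\,\Omega$ and the identity $\omega_g(\cdot)^2=|\det[g_v(\cdot,\cdot)]|$ one gets $\psi^2=|\det_{\theta^\xi}h^\xi|$. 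Formula (\ref{formula}) then selects the Blaschke normal via $|\phi|^{n+1}=\psi^2$, and I will take $\phi=\psi^{2/(n+1)}>0$ with $P$ defined by $h(P,\cdot)=-d\phi$.

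Next I compute $X(\phi)/\phi=\frac{2}{n+1}X(\log\psi)$. Applying Jacobi's formula to $\psi^2=|\det g_v|$ together with the identity $X(g_v(e_i,e_j))=2C_v(X,e_i,e_j)$ (obtained by differentiating $g_v(Y,Z)=\tfrac12\partial_s\partial_tF^2(v+sY+tZ)|_0$ in direction $X$), one reads $X(\log|\det g_v|)=g^{ij}X(g_{ij})=2I_v(X)$. Hence $X(\phi)/\phi=\frac{2}{n+1}I_v(X)$. Substituting into Lemma \ref{lemmachange} with $\mathcal{C}^\xi=2C$ and $h^\xi=h$ produces
\[
\phi\,\mathcal{C}^{\bar\xi}(X,Y,Z)=2C(X,Y,Z)-\frac{2}{n+1}\bigl(I(X)h(Y,Z)+I(Y)h(Z,X)+I(Z)h(X,Y)\bigr),
\]
which, according to Definition \ref{defMat}, is precisely $2\,\mathrm{M}(X,Y,Z)$.

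The main obstacle is sign bookkeeping: $\det_{\theta^\xi}h^\xi$ carries the sign $(-1)^\mu$ of the index $\mu$ of $g_v|_{T_v\Sigma}$, and the choice of sign for $\phi$ in $|\phi|^{n+1}=\psi^2$ interacts with the orientation of $\Sigma$ induced by $\bar\xi$ versus the one coming from $\omega_{h^{\bar\xi}}$; this is exactly what forces the factor $\varepsilon$ in the definition of $\mathrm{M}$ and is what makes the proposition genuinely pseudo-Finslerian (as opposed to the classical convex case, where $h^\xi$ is definite and the sign issue does not arise). Once the correct oriented Blaschke normal is pinned down, the rest of the computation is a direct substitution.
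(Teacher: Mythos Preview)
Your approach is exactly the paper's: pass from the centroaffine normal $\xi=-v$ to the Blaschke normal $\bar\xi=\phi\xi+P$ via (\ref{formula}), and read off the Blaschke cubic form from Lemma~\ref{lemmachange} using $\mathcal C^\xi=2C$ and $h^\xi=h$. Your verification that $\psi^2=|\det_{\theta^\xi}h^\xi|$ also matches the paper's.

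The gap is the sign $\varepsilon$. Your Jacobi-formula step gives $X(\log|\det g_v|)=g^{ij}X(g_{ij})=2I_v(X)$ and hence $X(\phi)/\phi=\tfrac{2}{n+1}I_v(X)$ \emph{without} an $\varepsilon$; substituting into Lemma~\ref{lemmachange} then yields $\phi\,\mathcal C^{\bar\xi}=2C-\tfrac{2}{n+1}(Ih+\cdots)$, which equals $2{\rm M}$ of Definition~\ref{defMat} only when $\varepsilon=1$. Your last paragraph blames orientation, but that cannot help: the right-hand side of Lemma~\ref{lemmachange} depends on $\phi$ only through the ratio $X(\phi)/\phi$, so flipping the sign of $\phi$ (i.e.\ choosing the other Blaschke normal) leaves $\phi\,\mathcal C^{\bar\xi}$ unchanged and introduces no $\varepsilon$. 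The paper, by contrast, obtains $X(\psi)/\psi=\varepsilon\, I_v(X)$: it writes the first step as $X(\psi)/\psi=\tfrac{1}{2|\det g|}\,X(\det g)$ (with $\det g$, not $|\det g|$, inside the derivative) and then Jacobi's formula $X(\det g)=\det g\cdot g^{kl}X(g_{kl})$ produces the factor $\det g/|\det g|=\varepsilon$. The entire discrepancy between your computation and the paper's sits in this single differentiation of the absolute value; that is the step you need to scrutinize rather than the orientation of $\Sigma$.
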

\begin{proof}
Due to Lemma \ref{lemmachange} and (I), (II), (III) above, all we have to show is that $\phi$ satisfies (\ref{formula}) and that $X(\psi)/\psi=\varepsilon I_v(X)$ for all
$v\in\Sigma$ and $X\in T_v\Sigma$.
\\ Note that
$$
\theta^\xi(X_1,\cdots,X_{n-1})=\frac{1}{\psi}\omega_g(X_1,\cdots,X_{n-1},\xi)=\frac{1}{\psi}|{\rm det} [h^\xi(X_i,X_j)]|^{1/2}
$$
for $X_1,\ldots,X_{n-1}\in T_v\Sigma$.
This follows from the definitions of $\theta^\xi$ and $\psi$, and the fact that $h^\xi_v=g_v$ in $T_v\Sigma$, $g_v(v,v)=1$ and $v$ is $g_v$-orthogonal to $T_v\Sigma$. Therefore, $\theta^\xi(X_1,\cdots,X_{n-1})=1$ if, and only if, $\psi^2=|{\rm det} [h^\xi(X_i,X_j)]|$, establishing the equality $ \phi^{n+1}=|{\rm det}_{\theta^\xi} h^\xi|$.
\\ Let now $b_1,\ldots,b_n$ be a basis of $V$ with $\Omega(b_1,\ldots,b_n)=1$, and denote, as before, $g_{ij}=g_v(b_i,b_j)$ and $g^{ij}$ the coefficients of the inverse matrix of the $g_{ij}$. Then $\psi=|{\rm det} [g_{ij}]|^{1/2}$ and therefore, for $X\in T_v\Sigma$,
\begin{eqnarray}
 X(\psi)/\psi & = & \frac{1}{2|{\rm det}[g_{ij}]|}X({\rm det}[g_{ij}])=\frac{1}{2|{\rm det}[g_{ij}]|}{\rm det}[g_{ij}]\sum_{k,l}X(g_{kl})g^{kl}\nonumber\\
 & = & \frac{{\rm det}[g_{ij}]}{|{\rm det}[g_{ij}]|}\sum_{k,l}C_v(X,b_k,b_l)g^{kl}\nonumber
 \end{eqnarray}
which in turn is equal to $\varepsilon I_v(X)$.
\end{proof}
Theorem \ref{thmmatsumoto} now follows from Proposition \ref{Randers-Kro}  and the following classic result (see \cite[Theorem 4.5]{NoSa94}).
\begin{thm}[Maschke, Pick, Berwald]
If dim $V\geq 3$, then the cubic form associated to the Blaschke normal field of a non-degenerate connected hypersurface $S\subset V$ vanishes if, and only if, $S$ is a hyperquadric.
\end{thm}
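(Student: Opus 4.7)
The plan has two implications, with nearly all of the effort in the converse. For the forward direction, I would take a centered non-degenerate hyperquadric $S = \{v \in V : q(v) = 1\}$ with the transverse vector field $\xi = -v$ (opposite of the position vector). Since $\nabla_X \xi = -X$ is tangent to $S$, $\xi$ is equiaffine; the Gauss formula combined with $\nabla$-parallelism of $q$ gives $h^\xi = q|_{TS}$ and hence $(\nabla^\xi_X h^\xi)(Y,Z) = 0$ for tangent $X,Y,Z$. A constant rescaling (depending only on $\Omega$ and $q$) then converts $\xi$ into the Blaschke normal, and by Lemma \ref{lemmachange} such constant rescalings preserve the vanishing of $\mathcal{C}$, so $\mathcal{C} \equiv 0$ on $S$. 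Affine paraboloids are handled analogously with a fixed transverse vector in the null direction of the quadratic part.

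For the converse, assume $\mathcal{C} = \nabla^\xi h^\xi \equiv 0$ for the Blaschke normal $\xi$. Since $\nabla^\xi$ is torsion-free and preserves $h^\xi$, it coincides with the Levi-Civita connection $\hat\nabla$ of the (pseudo-)Riemannian metric $h^\xi$ on $S$. The integrability conditions for a Blaschke immersion then reduce to (a) the Gauss equation $\hat R(X,Y)Z = h^\xi(Y,Z) S_\xi X - h^\xi(X,Z) S_\xi Y$, (b) the Codazzi equation $(\hat\nabla_X S_\xi)Y = (\hat\nabla_Y S_\xi)X$, and (c) $S_\xi$ being $h^\xi$-self-adjoint, where $S_\xi$ denotes the affine shape operator. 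The decisive step is to deduce $S_\xi = \lambda\,\mathrm{Id}$ for a scalar function $\lambda$, and subsequently $d\lambda = 0$: one takes the second Bianchi identity for $\hat R$ as given by (a), uses (b) to commute derivatives, and applies a Schur-type argument valid exactly when $\dim S \geq 2$, i.e.\ $\dim V \geq 3$.

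Once $S_\xi = \lambda\,\mathrm{Id}$ with constant $\lambda$, the Weingarten formula $\nabla_X \xi = -\lambda X$ integrates easily. For $\lambda \neq 0$, the map $\Phi : S \to V$, $\Phi(p) = p + \lambda^{-1}\xi(p)$, has vanishing derivative and so is constant, say $\Phi \equiv c$; then $\xi(p) = \lambda(c - p)$ along $S$, and substituting into the relation obtained by integrating $h^\xi$ using $\nabla^\xi h^\xi = 0$ shows that every $p \in S$ satisfies a fixed non-degenerate quadratic equation in $c - p$, exhibiting $S$ as an open subset of a hyperquadric centered at $c$. For $\lambda = 0$, $\xi$ is a constant vector and an analogous integration produces an affine paraboloid. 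The main obstacle is the Schur-type step forcing $S_\xi$ to be a scalar operator: this is the dimension-sensitive piece requiring the most delicate manipulation of the Gauss-Codazzi-Bianchi identities, while the reconstruction of the quadric afterward is a routine integration.
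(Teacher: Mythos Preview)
The paper does not prove this statement at all: it is quoted as a classical result and referred to \cite[Theorem~4.5]{NoSa94}. Your outline is essentially the standard argument one finds there, and both the forward direction and the final integration (once $S_\xi=\lambda\,\mathrm{Id}$ with $\lambda$ constant) are correct.

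There is, however, a genuine gap in your ``decisive step''. You propose to obtain $S_\xi=\lambda\,\mathrm{Id}$ from the second Bianchi identity together with Codazzi. This does not work: if you differentiate the Gauss equation $\hat R(X,Y)Z=h^\xi(Y,Z)S_\xi X-h^\xi(X,Z)S_\xi Y$ (using $\hat\nabla h^\xi=0$) and cyclically sum, every term cancels after applying Codazzi $(\hat\nabla_X S_\xi)Y=(\hat\nabla_Y S_\xi)X$. The second Bianchi identity is therefore \emph{automatically} satisfied and yields no constraint on $S_\xi$.

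The correct mechanism is algebraic, not differential. Since $\mathcal C=0$ forces $\nabla^\xi=\hat\nabla$ to be the Levi--Civita connection of $h^\xi$, the curvature tensor must satisfy the metric skew--symmetry
\[
h^\xi\bigl(\hat R(X,Y)Z,W\bigr)+h^\xi\bigl(\hat R(X,Y)W,Z\bigr)=0.
\]
Inserting the Gauss expression and taking $Z=Y$, $W=X$ with $h^\xi$--orthogonal non--null $X,Y$ gives $h^\xi(S_\xi X,X)/h^\xi(X,X)=h^\xi(S_\xi Y,Y)/h^\xi(Y,Y)$; polarization (using that $S_\xi$ is $h^\xi$--self--adjoint) then yields $S_\xi=\lambda\,\mathrm{Id}$ for some function $\lambda$, provided $\dim S\ge 2$. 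The constancy of $\lambda$ follows most directly from Codazzi, which becomes $d\lambda(X)\,Y=d\lambda(Y)\,X$ and hence $d\lambda=0$ when $\dim S\ge 2$. (A Schur argument via second Bianchi would also give $d\lambda=0$, but only for $\dim S\ge 3$, i.e.\ $\dim V\ge 4$, so it is both unnecessary and too weak here.) With this correction your plan goes through and matches the classical proof the paper cites.
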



\section{The Legendre dual of a conic pseudo-Finsler metric}\label{legendre}
In this section we will discuss the Legendre transformation of a conic pseudo-Finsler metric and then we will apply it to give a nice dual description
 of the process of translating such a metric.\\\\
Throughout this paper, $\tau:T^\ast M\rightarrow M$ will denote the projection map.
\subsection{Definitions and general results}
The {\it Legendre transformation} of a conic pseudo-Finsler metric $(M,F)$ is the map
\begin{equation}\nonumber
\mathscr{L}_F~:~A\longrightarrow T^\ast M,~~~~~~\mathscr{L}_F(u)=\frac{1}{2}D_fF^2(u),
\end{equation}
where $D_f$ stands for the fiber-derivative. Then, $\mathscr{L}_F$ is positively homogeneous of degree one, and
\begin{equation}\label{legendretransform}
\mathscr{L}_F(u)~=~g_u(u,\cdot),
\end{equation}
so that $\mathscr{L}_F(u)$ is the only covector characterized by
\begin{equation}\label{affinetangenthyperplane}
\mathscr{L}_F(u)\Bigl(u/F(u)+T_{u/F(u)}\Sigma_{\pi(u)}\Bigr)~=~F(u).
\end{equation}
The fiber-derivative of $\mathscr{L}_F$ is given by
\begin{equation}\label{DfL}
D_f\mathscr{L}_F(u)(v)~=~g_u(v,\cdot),
\end{equation}
hence, as the fundamental tensor $g$ is non-degenerate everywhere, it follows that $\mathscr{L}_F$ is a local diffeomorphism.
\begin{defi}
Given $\xi\in\mathscr{L}_F(A)$, we define, for each $v\in A$ with $\mathscr{L}_F(v)=\xi$, the {\it Legendre dual} of $F$ around $\xi$ as the map
\begin{equation}\nonumber
F^\ast~=~F\circ\mathscr{L}_F^{-1}
\end{equation}
defined on some neighborhood of $\xi$ in $\mathscr{L}_F(A)$ onto which $\mathscr{L}_F$ maps diffeomorphically a neighborhood of $v$ in $A$.
\end{defi}

\noindent Viewing $\mathscr{L}_F^{-1}$ and $F^\ast$ as global multivalued functions on $\mathscr{L}_F(A)$, we can describe $F^*$ as follows.

\begin{prop}\label{criticalvalues}
Given $\xi\in\mathscr{L}_F(A)$, we have
\begin{equation}
F^\ast(\xi)  =  \text{Positive critical values of $\xi|_{\Sigma_{\tau(\xi)}}$,}
\end{equation}
and to each corresponding critical point $v\in\Sigma_{\tau(\xi)}$, there is a unique $\lambda>0$ such that $\mathscr{L}_F(\lambda v)=\xi$.  Moreover,
\[\mathscr{L}_F(A)=\{\xi\in T^*M: \xi|_{\Sigma_{\tau(\xi)}} \text{admits positive critical points}\}.\]
\end{prop}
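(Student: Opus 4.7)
The plan is to extract the correspondence from \eqref{legendretransform}, namely $\mathscr{L}_F(u) = g_u(u,\cdot)$, combined with the identification $T_v\Sigma_{\tau(\xi)} = \{w : g_v(v,w) = 0\}$ stated just after \eqref{df^2}. The key algebraic tool is the scale invariance $g_{\lambda v} = g_v$ for $\lambda > 0$, a quick consequence of the degree-one homogeneity of $F$ (which in turn gives $\mathscr{L}_F(\lambda v) = \lambda\, g_v(v,\cdot)$ for every $\lambda > 0$).

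For the forward direction, given $\xi = \mathscr{L}_F(u)$ with $u \in A$, I would normalize by setting $v = u/F(u) \in \Sigma_{\tau(\xi)}$. Then $\xi = F(u)\, g_v(v,\cdot)$ annihilates $T_v\Sigma_{\tau(\xi)}$, so $v$ is a critical point of $\xi|_{\Sigma_{\tau(\xi)}}$ with critical value $\xi(v) = F(u)\, g_v(v,v) = F(u) > 0$ by \eqref{df^2}. This shows $F^*(\xi) = F(u)$ arises as a positive critical value.

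For the reverse direction, suppose $v \in \Sigma_{\tau(\xi)}$ is a critical point of $\xi|_{\Sigma_{\tau(\xi)}}$ with positive critical value $\mu$. Since $\xi$ then vanishes on the hyperplane $T_v\Sigma_{\tau(\xi)}$, and so does $g_v(v,\cdot)$, the two covectors are proportional: $\xi = c\, g_v(v,\cdot)$ for some $c \in \R$. Evaluating at $v$ and using \eqref{df^2} yields $c = \mu$. Scale invariance then gives
\[\xi = \mu\, g_v(v,\cdot) = g_{\mu v}(\mu v,\cdot) = \mathscr{L}_F(\mu v),\]
so $\xi \in \mathscr{L}_F(A)$, the parameter $\lambda = \mu$ is the required preimage scalar (unique since $\lambda \mapsto \lambda\, g_v(v,\cdot)$ is injective on $(0,\infty)$), and $F^*(\xi) = F(\mu v) = \mu$ matches the critical value.

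All three assertions follow at once from these two directions. The only mild obstacle is to record the scaling identity $g_{\lambda v} = g_v$ carefully, so as not to conflate a critical point $v \in \Sigma_{\tau(\xi)}$ with its positive-ray preimage $\lambda v$ under $\mathscr{L}_F$; apart from that bookkeeping, the argument is a direct reading of the defining formulas for $\mathscr{L}_F$ and for $T_v\Sigma$.
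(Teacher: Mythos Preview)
Your proof is correct and follows essentially the same route as the paper's: both argue that $u/F(u)$ is a critical point of $\xi|_{\Sigma}$ with value $F(u)$, and conversely that a critical point $v$ with positive value $\lambda$ forces $\xi=\mathscr{L}_F(\lambda v)$ because $\xi$ and $\mathscr{L}_F(\lambda v)$ share the same kernel $T_v\Sigma$ and agree at $v$. The only cosmetic difference is that the paper phrases this via the affine characterization \eqref{affinetangenthyperplane} (two covectors coinciding on the affine tangent hyperplane), whereas you use the equivalent linear formulation (proportional covectors with the same kernel, fixed by evaluating at $v$) together with the scale invariance $g_{\lambda v}=g_v$.
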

\begin{proof}
Let $\xi=\mathscr{L}_F(u)$, so that $F(u)\in F^\ast(\xi)$. It is straightforward to check that \eqref{affinetangenthyperplane} implies that $u/F(u)$ is a critical point of $\xi|_{\Sigma_{\tau(\xi)}}$ and $\xi(u/F(u))=F(u)>0$. Conversely, if $v\in \Sigma_{\tau(\xi)}$ is a critical point of  $\xi|_{\Sigma_{\tau(\xi)}}$, then $\xi$ is constant on the hyperplane $P=v+T_{v}\Sigma_{\pi(v)}$. If $\xi(v)=\lambda>0$, it follows that $\xi=\mathscr{L}_F(\lambda v)$ (since both coincide in the hyperplane $P$) and $F^*(\xi)=F(\lambda v)=\lambda$.  The last assertion follows easily from the previous reasoning.
\end{proof}

\begin{rem}\label{propminmax}
  If we particularize to the conic Finsler or Lorentz-Finsler case, we get the following description.
If $F$ is $(1)$ conic Finsler, or $(2)$ Lorentz Finsler, and $A$ is convex (or $A=TM\setminus\{\mbox{zero section}\}$ if $F$ is Finsler), then the map
$\mathscr{L}_F$ is a diffeomorphism onto its image and, hence, the Legendre dual $F^\ast$ is globally defined. Moreover,
\begin{eqnarray}
(1)~~F^\ast(\xi) & = & \max_{\Sigma_{\tau(\xi)}}\xi\nonumber\\
(2)~~F^\ast(\xi) & = & \min_{\Sigma_{\tau(\xi)}}\xi.\nonumber
\end{eqnarray}
\end{rem}

For future reference, we note that the inverse of $\mathscr{L}_F$ is simply the {\it Legendre transformation of $F^\ast$}, as can be shown by a direct computation employing the homogeneity of $F$ and $F^\ast$.
\begin{prop}\label{propinverse}
The inverse $\mathscr{L}_F^{-1}:\mathscr{L}_F(A)\rightarrow A$ is the fiber derivative of $(1/2)(F^\ast)^2$:
\begin{equation}\nonumber
\mathscr{L}_F^{-1}(\xi)~=~(1/2)D_f(F^\ast)^2(\xi).
\end{equation}
\end{prop}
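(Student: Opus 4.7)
The plan is to establish the identity directly by the chain rule applied to $(F^\ast)^2 = F^2\circ\mathscr{L}_F^{-1}$, combined with the two key formulas already available: equation \eqref{gv}, which identifies $\tfrac{1}{2}D_fF^2(u)$ with the covector $\mathscr{L}_F(u)=g_u(u,\cdot)$, and equation \eqref{DfL}, which describes the fiber-derivative of the Legendre map in terms of $g_u$.

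Concretely, I would fix $\xi\in\mathscr{L}_F(A)$, write $u:=\mathscr{L}_F^{-1}(\xi)$ on a local branch, and evaluate $\tfrac{1}{2}D_f(F^\ast)^2(\xi)$ against an arbitrary $\eta\in T^\ast_{\tau(\xi)}M$. The chain rule gives
\[
\tfrac{1}{2}D_f(F^\ast)^2(\xi)(\eta)~=~\tfrac{1}{2}D_fF^2(u)\bigl(D_f\mathscr{L}_F^{-1}(\xi)(\eta)\bigr),
\]
and by \eqref{gv} the first factor is precisely $\xi=g_u(u,\cdot)$. By \eqref{DfL} combined with the non-degeneracy of $g_u$, the vector $v:=D_f\mathscr{L}_F^{-1}(\xi)(\eta)$ is characterized by $g_u(v,\cdot)=\eta$. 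Substituting and invoking the symmetry of the fundamental tensor one gets
\[
\tfrac{1}{2}D_f(F^\ast)^2(\xi)(\eta)~=~\xi(v)~=~g_u(u,v)~=~g_u(v,u)~=~\eta(u).
\]
Since $\eta$ is arbitrary, this forces $\tfrac{1}{2}D_f(F^\ast)^2(\xi)=u=\mathscr{L}_F^{-1}(\xi)$, as required.

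I do not expect any serious obstacle; the argument is essentially a careful bookkeeping of the chain rule and of the identifications between $T_pM$, $T_p^\ast M$ and $(T_p^\ast M)^\ast$, with positive homogeneity of $F$ entering only through \eqref{gv}. The only point that requires mild care is that $F^\ast$ (and hence $\mathscr{L}_F^{-1}$) is in general only locally defined around $\xi$, so the identity must be read on the neighborhood on which a definite branch has been chosen; this is exactly the setting in which $F^\ast$ was defined above.
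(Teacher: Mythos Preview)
Your argument is correct and is precisely the ``direct computation employing the homogeneity of $F$'' that the paper alludes to without spelling out; the paper gives no further details beyond that phrase, so your write-up in fact supplies what is missing. The only cosmetic remark is that the paper also mentions the homogeneity of $F^\ast$, which in your approach is implicit (it is what makes $\tfrac{1}{2}D_f(F^\ast)^2(\xi)$ land in $T_pM$ via the canonical identification $(T_p^\ast M)^\ast\cong T_pM$), but this is already handled by your closing sentence on identifications.
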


\subsection{The Legendre dual of a translation}
Let $(V,\hat{F})$ be a conic pseudo-Minkowski norm obtained as the translation of a  conic pseudo-Minkowski norm $(V,F)$ and a vector  $W$ (recall Definition \ref{transMink}). It will be convenient
to consider the {\it reverse metric} of $F$, which is defined by
\begin{equation}\nonumber
\tilde{F}~:~-A\longrightarrow(0,\infty),~~~\tilde{F}(v)=F(-v).
\end{equation}

\noindent Let us denote by $\hat\Sigma$ and $\Sigma$ the indicatrices of $\hat F$ and $F$, respectively. Observe that the indicatrices of $\tilde{F}$ is $-\Sigma$.
\begin{lemma}\label{charStRv}
$\hat F$ is a straight (resp. reverse) translation of $F$ if and only if $\mathscr{L}_{\hat F}(v)(v-W)>0$ (resp. $\mathscr{L}_{\hat F}(v)(v-W)<0$) for every $v\in \hat \Sigma$.
\end{lemma}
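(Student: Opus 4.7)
My plan is to relate the Legendre transformation of $\hat F$ at $v\in\hat\Sigma$ to that of $F$ at the translated point $v-W\in\Sigma$. Since $\hat\Sigma$ is (locally) the translate $\Sigma+W$, the affine identification $u\mapsto u+W$ sends $T_{v-W}\Sigma$ onto $T_v\hat\Sigma$. Combining this with the tangent-space formulas $T_v\hat\Sigma=\ker\hat g_v(v,\cdot)$ and $T_{v-W}\Sigma=\ker g_{v-W}(v-W,\cdot)$, I would conclude that the two linear functionals
\[
\mathscr{L}_{\hat F}(v)=\hat g_v(v,\cdot), \qquad \mathscr{L}_F(v-W)=g_{v-W}(v-W,\cdot)
\]
have the same hyperplane as kernel, and therefore $\mathscr{L}_{\hat F}(v)=c\cdot\mathscr{L}_F(v-W)$ for some nonzero scalar $c=c(v)$.

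To pin down $c$, I would evaluate both sides at $v$. Using \eqref{df^2} together with $\hat F(v)=F(v-W)=1$, this yields
\[
1=\hat g_v(v,v)=c\cdot g_{v-W}(v-W,v), \quad\text{so}\quad c=\frac{1}{g_{v-W}(v-W,v)}.
\]
Evaluating the proportionality instead at $v-W$ gives $\mathscr{L}_{\hat F}(v)(v-W)=c\cdot g_{v-W}(v-W,v-W)=c$. Hence $\mathscr{L}_{\hat F}(v)(v-W)$ and $g_{v-W}(v-W,v)$ are reciprocals and, in particular, share the same sign.

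The lemma then follows from Proposition \ref{translatingF}: straightness (resp.\ reverseness) of $\hat F$ is characterized by $g_{v/\hat F(v)-W}(v/\hat F(v)-W,v)>0$ (resp.\ $<0$) for every $v\in\hat A$, and by the positive homogeneity of $F$ and of $g$ in its base point this is equivalent to the corresponding sign condition for the normalized vector $v\in\hat\Sigma$, namely $g_{v-W}(v-W,v)>0$ (resp.\ $<0$). I do not foresee any real obstacle beyond tracking signs; the essential geometric content is simply that $\hat\Sigma$ and $\Sigma$ share the same affine tangent hyperplane at $v$ and $v-W$, forcing $\mathscr{L}_{\hat F}(v)$ and $\mathscr{L}_F(v-W)$ to be proportional.
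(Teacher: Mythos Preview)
Your argument is correct and is essentially what the paper's one-line proof (``similar reasoning to Proposition~\ref{translatingF}'') is pointing at: both arguments rest on the fact that $T_v\hat\Sigma=T_{v-W}\Sigma$, so the transversal vectors $v$ and $v-W$ can be compared against the same tangent hyperplane, and the sign of their pairing encodes whether they lie on the same side---exactly the straight/reverse dichotomy of Proposition~\ref{translatingF}. Your explicit proportionality $\mathscr{L}_{\hat F}(v)=c\,\mathscr{L}_F(v-W)$ with $c=1/g_{v-W}(v-W,v)=\mathscr{L}_{\hat F}(v)(v-W)$ is a clean way to make this precise, and in fact yields slightly more than the lemma requires (it gives the exact value, not just the sign).
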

\begin{proof}
It follows from a similar reasoning to that of the proof of Proposition \ref{translatingF}.
\end{proof}
Given $\xi\in\mathscr{L}_{\hat{F}}(\hat{A})$, we have that if a vector $v\in\hat \Sigma$ is a critical point of $\xi|_{\hat\Sigma}$, with positive critical value, then the vector $v-W\in\Sigma$ (resp. $W-v\in-\Sigma$) is a critical point of $\xi|_{\Sigma}$ (resp. $\xi|_{-\Sigma}$) with positive critical value if $\hat F$ is a straight (resp. reverse) translation (recall Proposition \ref{translatingF}). Therefore, Proposition \ref{criticalvalues}  immediately gives the following result.
\begin{prop}\label{propdichotomy}
If $\hat F$ is a straight  (resp. reverse) translation of $F$, then $\mathscr{L}_{\hat F}(\hat{A})\subset\mathscr{L}_{F}(A)$ (resp.
$\mathscr{L}_{\hat F}(\hat{A})\subset\mathscr{L}_{\tilde{F}}(-A)$) and, to each local inverse $\mathscr{L}_{\hat{F}}^{-1}:\mathcal{O}\subset\mathscr{L}_{\hat{F}}(\hat{A})\rightarrow \hat{A}$ corresponds
a unique local inverse $\mathscr{L}_{F}^{-1}:\mathcal{O}\rightarrow A$ (resp. $\mathscr{L}_{\tilde{F}}^{-1}:\mathcal{O}\rightarrow-A$) such that, for $\xi\in\mathcal{O}$,
\begin{eqnarray}
\mathscr{L}_{\hat{F}}^{-1}(\xi)/\hat{F}^\ast(\xi) & = & W+\mathscr{L}_{F}^{-1}(\xi)/F^\ast(\xi)\nonumber\\
(\mbox resp.)~~\mathscr{L}_{\hat{F}}^{-1}(\xi)/\hat{F}^\ast(\xi) & = & W-\mathscr{L}_{\tilde{F}}^{-1}(\xi)/\tilde{F}^\ast(\xi).\label{legendrerelations}
\end{eqnarray}
Furthermore, on $\mathcal{O}$,
\begin{eqnarray}
\hat F^\ast(\xi) & = & F^\ast(\xi)+W(\xi)\nonumber\\
(\mbox{resp.})~~\hat F^\ast(\xi) & = & -\tilde{F}^\ast(\xi)+W(\xi)\label{legendreduallorentz},
\end{eqnarray}
where $W(\xi)$ is the usual action of a covector on a vector, namely $\xi(W)$.
\end{prop}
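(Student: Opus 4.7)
The plan is to reduce everything to the variational characterization of the Legendre dual given by Proposition \ref{criticalvalues}, namely that $F^\ast(\xi)$ records the positive critical values of $\xi|_{\Sigma_{\tau(\xi)}}$, and to exploit the obvious fact that translating the indicatrix does not alter its tangent spaces. Fix $\xi\in\mathscr{L}_{\hat{F}}(\hat{A})$ together with a smooth local inverse $\mathscr{L}_{\hat{F}}^{-1}:\mathcal{O}\rightarrow\hat{A}$ near $\xi$, and, using the degree-one homogeneity of $\mathscr{L}_{\hat{F}}$ and $\hat{F}$, set $v(\xi):=\mathscr{L}_{\hat{F}}^{-1}(\xi)/\hat{F}^{\ast}(\xi)\in\hat{\Sigma}_{\tau(\xi)}$, which depends smoothly on $\xi\in\mathcal{O}$.

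Next, I would observe that, since $\hat{\Sigma}\subset\Sigma+W$, the vector $v(\xi)-W$ lies in $\Sigma_{\tau(\xi)}$ and the tangent spaces $T_{v(\xi)}\hat{\Sigma}$ and $T_{v(\xi)-W}\Sigma$ coincide. By Proposition \ref{criticalvalues}, $v(\xi)$ is a critical point of $\xi|_{\hat{\Sigma}}$, hence, by the coincidence of tangent spaces, $v(\xi)-W$ is a critical point of $\xi|_\Sigma$, with critical value $\xi(v(\xi)-W)$. At this point Lemma \ref{charStRv} is decisive: since $\hat{F}^{\ast}(\xi)>0$ and $\mathscr{L}_{\hat F}(v(\xi))=\xi/\hat{F}^\ast(\xi)$, the sign of $\xi(v(\xi)-W)=\hat{F}^{\ast}(\xi)\,\mathscr{L}_{\hat F}(v(\xi))(v(\xi)-W)$ is $+$ in the straight case and $-$ in the reverse case.

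In the straight case, $v(\xi)-W$ is therefore a \emph{positive} critical point of $\xi|_{\Sigma_{\tau(\xi)}}$, so Proposition \ref{criticalvalues} yields $\xi\in\mathscr{L}_{F}(A)$ together with a unique $\lambda>0$ such that $\mathscr{L}_{F}(\lambda(v(\xi)-W))=\xi$; this $\lambda$ is precisely $F^{\ast}(\xi)=\xi(v(\xi)-W)$. Defining $\mathscr{L}_{F}^{-1}(\xi):=F^{\ast}(\xi)(v(\xi)-W)$ on $\mathcal{O}$ gives a smooth local inverse of $\mathscr{L}_F$ on $\mathcal{O}$ (smoothness being inherited from $v(\xi)$), and the relation $\mathscr{L}_{F}^{-1}(\xi)/F^{\ast}(\xi)=v(\xi)-W$ is exactly the first line of \eqref{legendrerelations}. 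The Legendre-dual formula $\hat{F}^{\ast}(\xi)=\xi(v(\xi))=W(\xi)+\xi(v(\xi)-W)=W(\xi)+F^{\ast}(\xi)$ is then immediate. In the reverse case, I instead consider $W-v(\xi)$, which lies in $-\Sigma_{\tau(\xi)}$, the indicatrix of $\tilde F$; it is a critical point of $\xi|_{\tilde{\Sigma}_{\tau(\xi)}}$ with positive critical value $-\xi(v(\xi)-W)$. The same procedure, applied now to $\tilde F$, produces a smooth local inverse $\mathscr{L}_{\tilde F}^{-1}:\mathcal{O}\rightarrow -A$ satisfying $\mathscr{L}_{\tilde{F}}^{-1}(\xi)/\tilde{F}^{\ast}(\xi)=W-v(\xi)$ and $\tilde{F}^{\ast}(\xi)=-\xi(v(\xi)-W)$, yielding $\hat{F}^{\ast}(\xi)=W(\xi)-\tilde{F}^{\ast}(\xi)$.

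The calculation itself is essentially a one-line rearrangement once the right critical-value correspondence is identified; the only real care needed is the bookkeeping of \emph{local} inverses and the correct interpretation of the reverse case, where the critical value of $\xi|_\Sigma$ has the wrong sign and must be reinterpreted as a positive critical value on the reversed indicatrix $-\Sigma=\tilde{\Sigma}$. This is precisely the step where Lemma \ref{charStRv} is invoked and is, I would expect, the only potential source of confusion in the argument.
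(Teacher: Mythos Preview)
Your argument is correct and follows essentially the same route as the paper: the paper's proof is the short paragraph preceding the proposition, which observes that a positive critical point $v$ of $\xi|_{\hat\Sigma}$ yields $v-W\in\Sigma$ (resp.\ $W-v\in-\Sigma$) as a positive critical point of $\xi|_{\Sigma}$ (resp.\ $\xi|_{-\Sigma}$) in the straight (resp.\ reverse) case, and then invokes Proposition~\ref{criticalvalues}. Your write-up simply makes explicit the bookkeeping (the smooth dependence of the local inverse, the computation $\hat F^\ast(\xi)=\xi(v(\xi))=W(\xi)+\xi(v(\xi)-W)$, and the appeal to Lemma~\ref{charStRv} for the sign) that the paper leaves to the reader.
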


\section{Flag curvature  and geodesics  of translations}\label{sectionflagcurvature}

Let $(M,\hat{F})$ be a translation of a pseudo-Finsler manifold $(M,F)$ by a vector field $W$. In this section we will be concerned with the  special case in that $W$ is a homothetic vector field (see \eqref{homothetic}),  in which case we will prove Theorems \ref{geodesicflow} and \ref{theoremcurvature}. Let us point out the following corollary of Theorem \ref{theoremcurvature}:

\begin{cor}
If $F$ has constant flag curvature equal to $K$, then $\hat{F}$ has constant flag curvature equal to $K-(1/4)\sigma^2$.
\end{cor}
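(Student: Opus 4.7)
The plan is to derive the corollary as an immediate, flag-by-flag application of Theorem \ref{theoremcurvature}. Fix an arbitrary point $p \in M$, a vector $u \in \hat{A}_p$, and a $g^{\hat{F}}_u$-nondegenerate $2$-plane $\Pi \subset T_p M$ containing $u$. Choose $w \in \Pi$ with $g^{\hat{F}}_u(u,w) = 0$, and let $\tilde{\Pi} = {\rm span}\{u/\hat{F}(u) - W_p,\, w\}$ be the corresponding $F$-flag produced by that theorem. The theorem asserts that $\tilde{\Pi}$ is a $(u/\hat{F}(u)-W_p)$-nondegenerate $2$-plane (with respect to $F$), so $K_F$ is defined on this flag, and furthermore
$$K_{\hat{F}}(u,\Pi)\;=\;K_F\bigl(u/\hat{F}(u) - W_p,\,\tilde{\Pi}\bigr) - \tfrac{1}{4}\sigma^2.$$

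By the constant-curvature hypothesis on $F$, the first term on the right equals $K$, hence the right-hand side reduces to $K - \tfrac{1}{4}\sigma^2$ independently of the choices of $p$, $u$, and $\Pi$. Taking this over all admissible flags $(u,\Pi)$ shows that $K_{\hat{F}}$ is constant with value $K-\tfrac{1}{4}\sigma^2$, as claimed. The argument is essentially a one-line substitution: there is no genuine obstacle, because the only delicate point — that the construction $(u,\Pi) \mapsto (u/\hat{F}(u)-W_p,\tilde{\Pi})$ lands in an admissible flag of $F$ on which $K_F$ may be evaluated — is already built into the statement of Theorem \ref{theoremcurvature}.
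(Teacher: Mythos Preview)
Your proposal is correct and matches the paper's treatment: the corollary is stated immediately after Theorem \ref{theoremcurvature} with no separate proof, precisely because it is the one-line substitution you describe. Your explicit remark that the admissibility of the target flag $(u/\hat{F}(u)-W_p,\tilde{\Pi})$ is already guaranteed by the theorem is a nice touch.
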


We start by explaining the new approach  to flag curvature  in $\S$\ref{sectioncurvature}, $\S$\ref{sectioncurves}, $\S$\ref{sectionflagdefi}, and postpone the proofs of Theorems \ref{geodesicflow} and \ref{theoremcurvature}
to $\S$\ref{sectionproof}.

\subsection{Contact geometry of a pseudo-Finsler metric}\label{sectioncurvature}
Throughout this section, let $F$ be a conic pseudo-Finsler metric with domain $A$. As we will only be concerned with local questions, we will therefore suppose that $F$ has a globally defined Legendre dual
\begin{equation}\label{hamiltonian}
\frac{1}{2}(F^\ast)^2~:~\mathscr{L}_F(A)\longrightarrow\mathbb{R}.
\end{equation}
Let us recall that $T^\ast M$, and hence $\mathscr{L}_F(A)$, possesses a canonical symplectic structure $\omega$: this is the differential of the {\it canonical $1$-form} $\alpha$ on $T^\ast M$, which is defined by
\begin{equation}
\alpha(X)~=~\xi\bigl(D\tau(\xi)(X)\bigr)~,~~~~\mbox{for $X\in T_\xi(T^\ast M)$.}
\end{equation}
Viewing (\ref{hamiltonian}) as a Hamiltonian function, it defines a Hamiltonian vector field $S$ on $\mathscr{L}_F(A)$ through the relation
\begin{equation}\label{hamiltonianvectorfield}
\frac{1}{2}D(F^\ast)^2~=~\omega(\cdot\hspace{0.05cm},\hspace{0.05cm}S),
\end{equation}
and the corresponding flow $\psi_t^S$, called co-geodesic flow of $F$, acts on $\mathscr{L}_F(A)$ by symplectic diffeomorphisms. Indeed, it is not hard to show that
\begin{equation}\label{pullbackalpha}
(\psi_t^S)^\ast\alpha~=~\alpha+t\frac{1}{2}D(F^\ast)^2.
\end{equation}
As, by definition, the geodesics of $F$ are the critical points of the energy functional
\begin{equation}\nonumber
E~:~C^F_{p,q}\rightarrow\mathbb{R},~~~E(\gamma)~=~\frac{1}{2}\int_a^bF(\dot{\gamma}(t))^2dt,
\end{equation}
where $C^F_{p,q}$ is the set of piecewise smooth admissible curves joining $p$ to $q$, the Hamilton's principle applies to show that the geodesics of $F$ are precisely the projections onto $M$ of the integral lines of $\psi_t^S$.

The manifold $\mathscr{L}_F(A)$ is foliated by the hypersurfaces $(F^\ast)^{-1}(r)$, $r>0$, each of which is invariant by the flow $\psi_t^S$ as it follows from (\ref{hamiltonianvectorfield}). It will be more convenient to work with the restriction of $\psi_t^S$ to theses hypersurfaces. First, le us recall the following fundamental notion.

\begin{defi}
An {\it exact contact manifold} is a manifold $X$ endowed with a 1-form $\alpha$ with the property that the restriction of $\omega=d\alpha$ to the distribution
$X\ni x\mapsto\mathcal{C}_x:=\ker(\alpha_x)\subset T_xX$ is non-degenerate (hence, symplectic). We call $\mathcal{C}$ the {\it contact plane distribution}.
Furthermore,
\begin{enumerate}
\item an {\it exact contact flow} on $X$ is a flow $\psi_t$ such that $(\psi_t)^\ast\alpha=\alpha$. It follows that such a flow leaves the distribution $\mathcal{C}$ invariant and its derivatives $D\psi_t(x)|_{\mathcal{C}_x}:\mathcal{C}_x\rightarrow\mathcal{C}_{\psi_t(x)}$ will consist of symplectic linear maps.
\item A distribution $\mathcal{L}$ on $X$ is called {\it Legendrean} if, for each $x$, $\mathcal{L}_x$ is a Lagrangian subspace of the symplectic space $(\mathcal{C}_x,\omega_x|_{\mathcal{C}_x})$, i.e., $\omega_x|_{\mathcal{L}_x}=0$ and dim$\mathcal{L}_x=\frac{1}{2}{\rm dim}\mathcal{C}_x$.
\end{enumerate}
\end{defi}

The following result is well-known; we include a proof here as we will need some facts established along it.

\begin{prop}\label{propcontact}
Endowed with the pull-back of the canonical $1$-form $\alpha$, $(F^\ast)^{-1}(r)$ is an exact contact manifold, with contact plane distribution $\mathcal{C}$. Furthermore,
\begin{enumerate}[(i)]
\item The flow $\psi_t^S$ acts on $(F^\ast)^{-1}(r)$ as an exact contact flow.
\item the distribution $\mathcal{L}$ on $(F^\ast)^{-1}(r)$ given by $\mathcal{L}_\xi=T_\xi\bigl(\tau^{-1}(x)\cap(F^\ast)^{-1}(r)\bigr)$ is Legendrean.
\end{enumerate}
\end{prop}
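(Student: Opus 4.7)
My plan is first to check that $\Sigma_r := (F^\ast)^{-1}(r)$ is a regular hypersurface and that $S$ is tangent to it. The former follows from Euler's identity applied to the degree-$2$ homogeneous $(F^\ast)^2$, which gives $D(F^\ast)^2(\xi)(\xi) = 2r^2 \neq 0$ on $\Sigma_r$; the latter is immediate from \eqref{hamiltonianvectorfield}, since $D(F^\ast)^2(S) = 2\omega(S,S) = 0$. The crucial preliminary step is then the identity $\alpha(S) = (F^\ast)^2$. This I will derive by noticing, via Hamilton's equations, that $D\tau(S_\xi)$ equals the fibre derivative $\tfrac{1}{2}D_f(F^\ast)^2(\xi)$, which by Proposition \ref{propinverse} is $\mathscr{L}_F^{-1}(\xi)$; setting $v = \mathscr{L}_F^{-1}(\xi)$, so that $\xi = g_v(v,\cdot)$ by \eqref{legendretransform}, one computes
\[
\alpha(S)_\xi = \xi(D\tau(S_\xi)) = \xi(v) = g_v(v,v) = F(v)^2 = (F^\ast)^2(\xi).
\]
In particular $\alpha(S)|_{\Sigma_r} = r^2 \neq 0$, which makes $S$ transverse to $\mathcal{C} := \ker(\alpha|_{T\Sigma_r})$.

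Next I will argue that the kernel of $d\alpha|_{T_\xi\Sigma_r}$ is exactly $\mathbb{R} S_\xi$. Since $T_\xi\Sigma_r = \ker d(F^\ast)^2_\xi$ is a hyperplane in the symplectic space $(T_\xi T^\ast M, \omega_\xi)$, its symplectic orthogonal is one-dimensional, is automatically contained in $T_\xi\Sigma_r$ (any hyperplane is coisotropic), and, in view of the identity $i_S\omega = -\tfrac{1}{2}d(F^\ast)^2$ coming from \eqref{hamiltonianvectorfield}, is spanned by $S_\xi$. Hence $d\alpha$ restricts to a non-degenerate $2$-form on the complementary $(2n-2)$-plane $\mathcal{C}_\xi$, which proves that $(\Sigma_r, \alpha|_{\Sigma_r})$ is an exact contact manifold.

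Part (i) is then a direct consequence of formula \eqref{pullbackalpha}: restricting the identity $(\psi_t^S)^\ast\alpha = \alpha + t\cdot\tfrac{1}{2}D(F^\ast)^2$ to the $\psi_t^S$-invariant $\Sigma_r$, the second term annihilates $T\Sigma_r$, giving $(\psi_t^S)^\ast(\alpha|_{\Sigma_r}) = \alpha|_{\Sigma_r}$. For part (ii), I will observe that the vertical fibre $T_\xi\tau^{-1}(x)$ is Lagrangian in $T_\xi T^\ast M$: it is isotropic since $\alpha$, and hence $d\alpha$, pulls back to zero on $\tau^{-1}(x) = T^\ast_x M$, and it has dimension $n = \tfrac{1}{2}\dim T^\ast M$. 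By homogeneity $r$ is a regular value of $F^\ast|_{\tau^{-1}(x)}$, so $\mathcal{L}_\xi = T_\xi(\tau^{-1}(x) \cap \Sigma_r)$ has dimension $n-1$; being contained in the Lagrangian vertical fibre it is isotropic, and being vertical it lies in $\mathcal{C}_\xi$. With $n-1 = \tfrac{1}{2}\dim\mathcal{C}_\xi$, $\mathcal{L}_\xi$ is Lagrangian inside $\mathcal{C}_\xi$, i.e.\ Legendrean.

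The only mildly delicate point is the identity $\alpha(S) = (F^\ast)^2$, which requires unpacking the Hamiltonian formalism via Proposition \ref{propinverse}; once that is in hand, the remainder is standard symplectic linear algebra.
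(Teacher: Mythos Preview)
Your proof is correct. Both arguments establish the key identity $\alpha(S)=(F^\ast)^2$ and then deduce non-degeneracy of $\omega|_{\mathcal{C}}$, but the routes differ. The paper introduces the tautological (Liouville) vector field $C$ and the relation $\alpha=\omega(C,\cdot)$, then obtains $\alpha(S)=(F^\ast)^2$ in one line from Euler's identity; from this it builds the $\omega$-orthogonal decomposition $T_\xi(T^\ast M)=\mathcal{C}_\xi\oplus{\rm span}\{S_\xi,C_\xi\}$ (equation \eqref{decomposition}) and the characterization $\mathcal{C}_\xi=\ker\omega(S_\xi,\cdot)\cap\ker\omega(C_\xi,\cdot)$ (equation \eqref{contactplane}), and reads off non-degeneracy from the orthogonality. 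You instead compute $\alpha(S)$ by identifying $D\tau(S)$ with $\mathscr{L}_F^{-1}$ via Hamilton's equations and Proposition~\ref{propinverse}, and you get non-degeneracy from the standard fact that the characteristic line $\mathbb{R}S$ of the coisotropic hyperplane $T\Sigma_r$ is transverse to $\mathcal{C}$. Your argument is slightly more economical and avoids introducing $C$; the paper's argument, however, is written precisely so as to record \eqref{decomposition} and \eqref{contactplane}, which are invoked again in the construction of the symplectic map ${\rm T}$ in Theorem~\ref{theoremcurves} and in Lemma~5.6. So both approaches prove the proposition, but the paper's has the side benefit of setting up tools for \S\ref{sectionproof}.
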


\begin{proof}
Let $C$ be the tautological vector field on $T^\ast M$, i.e., for each $\xi\in T^\ast M$, $C_\xi=\xi\in T_\xi(\tau^{-1}(\tau(\xi)))\subset T_\xi(T^\ast M)$ (where we have identified $T_\xi(\tau^{-1}(\tau(\xi)))$
with $T^\ast_{\tau(\xi)}M$ in the canonical way). It is known that $C$ is the $\omega$-dual of $\alpha$:
\begin{equation}\label{alphaomega}
\alpha~=~\omega(C\hspace{0.05cm},\hspace{0.05cm}\cdot).
\end{equation}
The relations (\ref{hamiltonianvectorfield}) and (\ref{alphaomega}), together with the Euler relation for 2-homogeneous functions, give
$\alpha_\xi(S_\xi)=\frac{1}{2}D(F^\ast)^2(\xi)(C_\xi)=F^\ast(\xi)^2\neq 0$,
showing that $C$ is everywhere transverse to $(F^\ast)^{-1}(r)$ and $S$ is never tangent to the distribution $\mathcal{C}$. Hence,
\begin{equation}\label{decomposition}
T_\xi( T^\ast M)~=~\mathcal{C}_\xi\oplus{\rm span}\{S_\xi,C_\xi\}.
\end{equation}
Again from (\ref{hamiltonianvectorfield}) and (\ref{alphaomega}),
\begin{equation}\label{contactplane}
\mathcal{C}_\xi~=~\ker\omega_\xi(S_\xi\hspace{0.05cm},\hspace{0.05cm}\cdot)\cap\ker\omega_\xi(C_\xi\hspace{0.05cm},\hspace{0.05cm}\cdot)
\end{equation}
and so the decomposition (\ref{decomposition}) is $\omega_\xi$-orthogonal. As $\omega_\xi$ is non-degenerate on $T_\xi(T^\ast M)$ and on ${\rm span}\{S_\xi,C_\xi\}$
(because $\omega_\xi(C_\xi,S_\xi)=\alpha_\xi(S_\xi)\neq 0$), it follows from (\ref{decomposition}) that it is non-degenerate on $\mathcal{C}_\xi$. For assertion $(i)$,
we pull-back the relation (\ref{pullbackalpha}) to $(F^\ast)^{-1}(r)$, to get $(\psi_t^S)^\ast\alpha=\alpha$ (omitting pull-back's).
For $(ii)$, as $\alpha$ vanishes on vectors tangent to $\tau^{-1}(x)$, its pull-backs to the leaves of the foliation $x\mapsto\tau^{-1}(x)\cap(F^\ast)^{-1}(r)$ of $(F^\ast)^{-1}(r)$
are null, and hence $d\alpha$ pulls back to the null form on each leaf.
\end{proof}

\noindent It follows from the last proposition that the flow $\psi_t^S$ carries $\mathcal{L}$ into Lagrangian subspaces of $\mathcal{C}$. Let $\xi\in(F^\ast)^{-1}(r)$ be fixed, and
denote by $\mathbb{V}$ the symplectic vector space $(\mathcal{C}_\xi,\omega|_{\mathcal{C}_\xi})$. The collection of all Lagrangian subspaces of $\mathbb{V}$, denoted by
$\Lambda(\mathbb{V})$, possesses a natural smooth structure and is called the {\it Lagrangian Grasmannian manifold} of $\mathbb{V}$.
\begin{defi}\label{Jacobicurve}
The {\it Jacobi curve} of $(\ref{hamiltonian})$, based at $\xi$, is the smooth curve in $\Lambda(\mathbb{V})$ given by
\begin{equation}
\ell_\xi(t)~=~D\psi_{-t}^S(\mathcal{L}_{\psi_t^S(\xi)}).
\end{equation}
\end{defi}

\subsection{The geometry of curves in $\Lambda(\mathbb{V})$}\label{sectioncurves}
Here, we briefly recall the main concepts of \cite{AD} which will appear in the sequel.
\\ For each $\ell\in\Lambda(\mathbb{V})$, the tangent space $T_\ell\Lambda(\mathbb{V})$ is canonically isomorphic to the space ${\rm Sym}(\ell)$ of symmetric bilinear forms on $\ell$.
Indeed, if $\ell(t)$ is a smooth curve in $\Lambda(\mathbb{V})$ then, for each $\tau$, the following rule defines a symmetric bilinear form $W(\tau)$ on $\ell(\tau)$,
\begin{equation}\nonumber
W(\tau)(a,b)~=~\omega\bigl(\dot{a}(\tau),b\bigr),
\end{equation}
where $a(\cdot):(\tau-\varepsilon,\tau+\varepsilon)\rightarrow\mathbb{V}$ is a smooth curve satisfying $a(\tau)=a$, and $a(t)\in\ell(t)$ for all $t$. We call
the assignment $t\mapsto W(t)$ the {\it Wronskian} of the curve $\ell(t)$.

\begin{defi}
We call the curve $\ell(t)$ {\it fanning} if its Wronskian $W(t)$ is non-degenerate for all $t$.
\end{defi}
\noindent The condition of being fanning may be translated in terms of frames as follows: if $\mathcal{A}(t)=\{a_1(t),...,a_n(t)\}$ is a smooth frame for $\ell(t)$, then $\ell(t)$ is fanning if, and only if,
\begin{equation}\nonumber
\{\mathcal{A}(t),\dot{\mathcal{A}}(t)\}=\{a_1(t),...,a_n(t),\dot{a}_1(t),...,\dot{a}_n(t)\}
\end{equation}
is a smooth frame for $\mathbb{V}$. Hence, if $\mathcal{A}(t)$ is a smooth frame for a fanning curve $\ell(t)$, we may define a smooth curve of endomorphisms ${\bf F}(t):\mathbb{V}\rightarrow\mathbb{V}$ by
\begin{eqnarray}
{\bf F}(t)a_i(t) & = & 0~~~~~\mbox{for all $i$}\nonumber\\
{\bf F}(t)\dot{a}_i(t) & = & a_i(t)~~~~\mbox{for all $i$}\nonumber
\end{eqnarray}

\noindent It turns out that the endomorphisms ${\bf F}(t)$ do not depend on the particular choice of the frame $\mathcal{A}(t)$, but only on $\ell(t)$. We call the assignment $t\mapsto{\bf F}(t)$ the {\it fundamental endomorphism} of $\ell(t)$. Taking the second derivative of ${\bf F}(t)$, we get the main linear invariant of a fanning curve.
\begin{defi}
The {\it Jacobi endomorphism} of $\ell(t)$ is the smooth curve of endomorphisms ${\bf K}(t):\mathbb{V}\rightarrow\mathbb{V}$ defined by
\begin{equation}\nonumber
{\bf K}(t)~=~(1/4)\ddot{{\bf F}}(t)^2.
\end{equation}
For each $t$, ${\bf K}(t)$ restricts to a $W(t)$-symmetric endomorphism of $\ell(t)$. We have the following basic transformation's rule, which will be needed for the proof of Theorem \ref{theoremcurvature}; its proof follows directly from \cite[Theorem 1.4]{AD}.
\begin{prop}\label{propfanning}
If two fanning curves $\ell(t)$ and $\tilde{\ell}(t)$ are related by $\tilde{\ell}(t)={\rm T}\ell(s(t))$, where ${\rm T}:\mathbb{V}\rightarrow\tilde{\mathbb{V}}$ is a linear symplectic (or anti-symplectic) map, and $s$ is a diffeomorphism of $\mathbb{R}$, then, whenever $W(s(t))(a,a)\neq 0$,
\begin{equation}\nonumber
\frac{\tilde{W}(t)\bigl(\tilde{\bf K}(t){\rm T}a\hspace{0.05cm},\hspace{0.05cm}{\rm T}a\bigr)}{\tilde{W}(t)\bigl({\rm T}a\hspace{0.05cm},\hspace{0.05cm}{\rm T}a\bigr)}~=~\dot{s}(t)^2\frac{W(s(t))\bigl({\bf K}(s(t))a\hspace{0.05cm},\hspace{0.05cm}a\bigr)}{W(s(t))\bigl(a\hspace{0.05cm},\hspace{0.05cm}a\bigr)}+\frac{1}{2}\{s(t),t\},
\end{equation}
where $W(t)$, $\tilde{W}(t)$, ${\bf K}(t)$, $\tilde{{\bf K}}(t)$ are the Wronskians and Jacobi endomorphisms of $\ell(t)$ and $\tilde{\ell}(t)$, respectively, and $\{s(t),t\}=(d/dt)(\ddot{s}/\dot{s})-(1/2)(\ddot{s}/\dot{s})^2$ is the {\rm Schwarzian derivative} of $s$.
\end{prop}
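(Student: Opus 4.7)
The plan is to split the given relation $\tilde{\ell}(t) = T\ell(s(t))$ into two elementary operations, a reparametrization $t \mapsto s(t)$ of a curve inside $\Lambda(\mathbb{V})$ and the fixed linear (anti-)symplectic map $T\colon\mathbb{V}\to\tilde{\mathbb{V}}$, and to analyze how each of them acts on the Wronskian-weighted quotient $W(\mathbf{K}a,a)/W(a,a)$.

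For the push-forward by $T$ alone, I would fix a frame $a_i(t)$ of $\ell(t)$; then $\{Ta_i(t)\}$ is a frame for $T\ell(t)$ with derivative $T\dot{a}_i(t)$, since $T$ is $t$-independent. As $T$ satisfies $\omega(T\cdot,T\cdot) = \pm\omega(\cdot,\cdot)$, the Wronskian of $T\ell(t)$ evaluated at $Ta,Tb$ is $\pm W(t)(a,b)$, the fundamental endomorphism transforms by conjugation, $\tilde{\mathbf{F}}(t) = T\mathbf{F}(t)T^{-1}$, and consequently so does $\tilde{\mathbf{K}}(t)$. In the weighted ratio the sign cancels between numerator and denominator and the conjugation drops out when evaluated at $Ta$, so the ratio is insensitive to $T$. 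This reduces the assertion to the case $T=\mathrm{Id}$, $\tilde{\ell}(t) = \ell(s(t))$.

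For the reparametrization step, the frame $a_i(s(t))$ for $\ell(s(t))$ yields Wronskian $\dot{s}(t) W(s(t))$ and fundamental endomorphism $\dot{s}(t)^{-1}\mathbf{F}(s(t))$, directly from the definitions. Two differentiations of the latter together with $\mathbf{K}= \tfrac{1}{4}\ddot{\mathbf{F}}^2$ show that, after restriction to $\ell(s(t))$, the leading contribution to $\tilde{\mathbf{K}}(t)$ is $\dot{s}(t)^2\mathbf{K}(s(t))$, accompanied by a scalar correction polynomial in $\dot{s},\ddot{s},\dddot{s}$ which, once normalized by $W(s(t))(a,a)$, collapses into $\tfrac{1}{2}\{s(t),t\}$. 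This is precisely the one-parameter reparametrization formula established in \cite[Theorem 1.4]{AD}, and I would quote it rather than redo the bookkeeping.

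The main technical obstacle is verifying that the non-$\dot{s}^2$ terms in the expansion of $\ddot{\tilde{\mathbf{F}}}^2$ reassemble exactly into the Schwarzian derivative of $s$; this is done inside \cite[Theorem 1.4]{AD}. The only additional ingredient in the present more general setting is the $T$-invariance of the Wronskian-weighted ratio established in the second paragraph, which lets the single-space statement of \cite{AD} carry over verbatim to curves living in distinct symplectic spaces $\mathbb{V}$ and $\tilde{\mathbb{V}}$.
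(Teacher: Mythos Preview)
Your proposal is correct and follows essentially the same approach as the paper, which simply states that the result ``follows directly from \cite[Theorem 1.4]{AD}'' without further elaboration. Your decomposition into the $T$-invariance step and the reparametrization step makes explicit what the paper leaves implicit, and your observation that the (anti-)symplectic map $T$ conjugates $\mathbf{F}$ and $\mathbf{K}$ while scaling the Wronskian by $\pm 1$ (so that the sign cancels in the ratio) is exactly the bridge needed to reduce to the single-space reparametrization formula of \cite{AD}.
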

\end{defi}

\subsection{Definition of flag curvature via fanning curves}\label{sectionflagdefi}
We go back now to the Jacobi curve $\ell_\xi(t)$; see Definition \ref{Jacobicurve}. The Wronskian $W_\xi(t)$ of $\ell_\xi(t)$ corresponds to the fundamental tensor $g$ of $F$ as we now explain. \\\\
Let $v\in A$ be such that $\mathscr{L}_F(v)=\xi$, and let $\gamma(t)$ be the geodesic of $F$ with $\dot{\gamma}(0)=v$, so that we have $\mathscr{L}_F(\dot{\gamma}(t))=\psi_t^S(\xi)$.
We will now describe an isomorphism
\begin{equation}\label{isomorphismcurve}
\iota_{\dot{\gamma}(t)}~:~\ker g_{\dot{\gamma}(t)}(\dot{\gamma}(t)\hspace{0.05cm},\hspace{0.05cm}\cdot)\longrightarrow\ell_\xi(t).
\end{equation}
For this, note that $\mathcal{L}_{\psi_t(\xi)}$ is canonically a subspace of $T^\ast_{\gamma(t)}M$ and, via the isomorphism $D_f\mathscr{L}_F(\dot{\gamma}(t)):T_{\gamma(t)}M\rightarrow T_{\gamma(t)}^\ast M$, it corresponds to the tangent space, at $\dot{\gamma}(t)$, of the dilatation by $r$ of $\Sigma_{\gamma(t)}$, which in turn is equal to $\ker g_{\dot{\gamma}(t)}(\dot{\gamma}(t)\hspace{0.05cm},\hspace{0.05cm}\cdot)$. So we define $\iota_{\dot{\gamma}(t)}$ by composing the restriction of $D_f\mathscr{L}_F(\dot{\gamma}(t))$ to $\ker g_{\dot{\gamma}(t)}(\dot{\gamma}(t)\hspace{0.05cm},\hspace{0.05cm}\cdot)$ with $D\psi_{-t}^S$. For $t=0$, this is simply
\begin{eqnarray}\label{isomorphismzero}
\iota_v=D_f\mathscr{L}_F(v)|_{\ker g_v(v,\cdot)}:~\ker g_v(v\hspace{0.05cm},\hspace{0.05cm}\cdot) & \longrightarrow &\ell_\xi(0)=\mathcal{L}_\xi\\
 w & \longmapsto & g_v(w,\cdot)\nonumber.
\end{eqnarray}
We refer to \cite{ADH} (see also \cite{Henrique}) for a proof of the following result.
\begin{prop}\label{propwronskian}
Via the isomorphism $(\ref{isomorphismcurve})$, the Wronskian $W_\xi(t)$ of $\ell_\xi(t)$ corresponds to the restriction of $g_{\dot{\gamma}(t)}$ to $\ker g_{\dot{\gamma}(t)}(\dot{\gamma}(t)\hspace{0.05cm},\hspace{0.05cm}\cdot)$.
\end{prop}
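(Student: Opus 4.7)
The plan is to verify the equality of bilinear forms at $t = 0$ in canonical cotangent coordinates; the case of general $t$ follows by running the same argument with $v$ replaced by $\dot\gamma(t)$ and $\xi$ by $\psi_t^S(\xi)$.

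Fix canonical coordinates $(x^i, p_i)$ on $T^*M$ around $\xi$ and set $H=\frac{1}{2}(F^*)^2$. By Proposition \ref{propinverse}, $v^i=H^{p_i}(\xi)$ and, differentiating $\mathscr{L}_F^{-1}\circ\mathscr{L}_F=\mathrm{id}$ in the fiber, the Hessian $g^{ij}:=H^{p_ip_j}(\xi)$ is the inverse of $g_{ij}(v):=g_v(\partial_{x^i},\partial_{x^j})$. In these coordinates $\mathcal{L}_\xi$ is the set of vertical vectors $(0,\delta p)$ with $H^{p_i}(\xi)\delta p_i=0$, and $\iota_v$ corresponds to $w^i\mapsto(0,\,g_{ij}(v)w^j)$; the domain condition $g_v(v,w)=0$ matches the level-set condition $H^{p_i}\delta p_i=v^ig_{ij}w^j=0$.

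Given $a=\iota_v(w)=(0,\delta p)$ and $b=\iota_v(w')=(0,\delta p')$, I would extend $a$ to a smooth curve $a(t)=(\epsilon(t),\,\delta p+\mu(t))\in\ell_\xi(t)$ with $\epsilon(0)=0=\mu(0)$. The defining condition $a(t)\in\ell_\xi(t)$ is that the solution $(X(s),P(s))$ of the linearized Hamilton equations
\begin{equation}\nonumber
\dot X^i=H^{p_ip_j}P_j+H^{p_ix^j}X^j,\qquad \dot P_i=-H^{x^ip_j}P_j-H^{x^ix^j}X^j,
\end{equation}
with initial datum $(\epsilon(t),\delta p+\mu(t))$ at $s=0$, satisfies $X(t)=0$. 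Expanding in $t$ and using $\epsilon(t),\mu(t)=O(t)$ to absorb the contributions of $\mu$ and of $H^{p_ix^j}$ into $O(t^2)$, this forces
\begin{equation}\nonumber
0=\epsilon^i(t)+t\,g^{ij}\delta p_j+O(t^2),
\end{equation}
hence $\dot\epsilon^i(0)=-g^{ij}\delta p_j$. Since $\omega=dp_i\wedge dx^i$, a direct evaluation then yields
\begin{equation}\nonumber
W_\xi(0)(a,b)=\omega_\xi(\dot a(0),b)=-\delta p'_i\,\dot\epsilon^i(0)=g^{ij}\delta p_i\delta p'_j=g_v(w,w'),
\end{equation}
where the last step uses $\delta p_i=g_{ij}w^j$ together with $g^{ij}g_{jk}=\delta^i_k$.

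The main subtlety worth recording is that $\dot\epsilon^i(0)$ (and hence the value of $W_\xi(0)(a,b)$) is independent of the choice of extension: any two smooth extensions of $a$ into $\ell_\xi(t)$ differ at first order by a vector in $\mathcal{L}_\xi$, which is purely vertical, so the horizontal component $\dot\epsilon^i(0)$ is intrinsic. The existence of at least one such smooth extension follows from the smoothness of $t\mapsto\ell_\xi(t)$ as a curve in the Lagrangian Grassmannian of $\mathcal{C}_\xi$. The only point that requires genuine care is the first-order expansion of $X(t)$ above; the rest is purely formal bookkeeping.
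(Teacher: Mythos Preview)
Your argument is correct. The paper itself does not prove this proposition; it simply refers to \cite{ADH} and \cite{Henrique}. Your direct coordinate computation on $T^*M$---identifying $H^{p_ip_j}(\xi)$ with $g^{ij}$, reading off $\dot\epsilon^i(0)=-g^{ij}\delta p_j$ from the first-order expansion of the linearized flow, and evaluating $\omega(\dot a(0),b)$---is a clean self-contained verification, and the reduction to $t=0$ via the cocycle property $\ell_\xi(s+\cdot)=D\psi_{-s}^S\bigl(\ell_{\psi_s^S(\xi)}(\cdot)\bigr)$ together with the symplecticity of $D\psi_{-s}^S$ is legitimate.

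One small imprecision worth flagging: the condition ``$X(t)=0$'' that you call \emph{the defining condition} for $a(t)\in\ell_\xi(t)$ is only the verticality part; membership in $\mathcal{L}_{\psi_t^S(\xi)}$ also requires tangency to the level hypersurface $(F^*)^{-1}(r)$, i.e.\ an additional constraint on $P(t)$. This does not affect your conclusion, however: the verticality condition alone already forces $\dot\epsilon^i(0)=-g^{ij}\delta p_j$, and in any case the discrepancy between $D\psi_{-t}^S(\mathcal{V}_{\psi_t^S(\xi)}T^*M)$ and $\ell_\xi(t)$ lies in the direction $D\psi_{-t}^S(C_{\psi_t^S(\xi)})$, whose derivative at $t=0$ pairs to zero with the vertical vector $b$ under $\omega$ (since $\omega(C_\xi,\cdot)=\alpha$ vanishes on vertical vectors). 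So the value of $W_\xi(0)(a,b)$ is unaffected.
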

\noindent It follows from this proposition that $\ell_\xi(t)$ is a fanning curve; see $\S$\ref{sectioncurves}. Let ${\bf K}_\xi(t)$ be its Jacobi endomorphism.

\begin{defi}\label{defincurvature}
Let $\Pi\subset T_xM$ be a 2-plane containing $v$.
\begin{enumerate}
\item We call $\Pi$ $v$-{\it nondegenerate} if the restriction of $g_v$ to $\Pi$ is nondegenerate. This is the same as requiring that
the quantity $g_v(v,v)g_v(u,u)-g_v(u,v)^2$ be non null for all $u$ such that $\Pi={\rm span}\{u,v\}$.
\item If $\Pi$ is $v$-nondegenerate, we define the {\it flag curvature} of the flag $(v,\Pi)$ as follows: pick a non null vector $w\in\Pi$, with $g_v(v,w)=0$, and let $a\in\ell_\xi(0)$ corresponding to $w$ via $(\ref{isomorphismzero})$. From Proposition \ref{propwronskian} and the $v$-nondegenerescence of $\Pi$, $W_\xi(0)(a,a)\neq 0$, and we may define
\begin{equation}\nonumber
K_F(v,\Pi)~=~\frac{1}{F^2(v)}\frac{W_\xi(0)\bigl({\bf K}_\xi(0)a\hspace{0.01cm},\hspace{0.01cm}a\bigr)}{W_\xi(0)(a\hspace{0.01cm},\hspace{0.01cm}a)}.
\end{equation}
\end{enumerate}
\end{defi}

\noindent Of course, the above definition of flag curvature coincides with the usual ones; a proof of this fact may be found in \cite{ADH} (see also \cite{Henrique}).

 We end this section
by remarking the following simple fact which we will need for the proof of Theorem \ref{theoremcurvature}: the flag curvatures of a metric $F$ and of its reverse $\tilde{F}$ are related by
\begin{equation}\label{remarkreverse2}
K_{\tilde{F}}(v,\Pi)~=~K_F(-v,\Pi).
\end{equation}


\subsection{Proof of Theorems \ref{theoremcurvature} and \ref{geodesicflow}}\label{sectionproof}
Along the proof we will use the following convention for $\epsilon$ and $F_\epsilon$:
\begin{enumerate}[(a)]
\item if $\hat F$ is a straight translation of $F$, then  $\epsilon=1$ and $F_\epsilon=F_1=F$,
\item if $\hat{F}$ is a reverse translation of $F$, then $\epsilon=-1$ and $F_\epsilon=F_{-1}=\tilde{F}$.
\end{enumerate}
Set $\xi=\mathscr{L}_{\hat{F}}(u)$, and let $\mathcal{O}\subset\mathscr{L}_{\hat{F}}(\hat{A})$ be a neighborhood of $\xi$ where $\mathscr{L}_{\hat{F}}$ has a local inverse
\begin{equation}\nonumber
\mathscr{L}_{\hat{F}}^{-1}~:~\mathcal{O}\longrightarrow \hat A.
\end{equation}
According to Proposition \ref{propdichotomy}, this determines a local inverse $\mathscr{L}_{F_\epsilon}^{-1}:\mathcal{O}\rightarrow \epsilon A$ such that
\begin{equation}
\mathscr{L}_{F_\epsilon}^{-1}(\xi)/F_\epsilon^\ast(\xi)~=~-\epsilon W+\epsilon u/\hat{F}(u)
\end{equation}
and, for simplicity, we will denote by $H_\epsilon$ and $H_2$, respectively, the Hamiltonian functions
\begin{equation}\nonumber
{F_\epsilon}^\ast,~\hat{F}^\ast~:~\mathcal{O}\longrightarrow\mathbb{R}.
\end{equation}
For $i=\epsilon, 2$, let $S_i$ and $\hat{S}_i$ be the Hamiltonians vector fields of $H_i$ and $(1/2)H_i^2$, respectively. Then,
$\hat{S}_i=H_iS_i$ and, therefore, for each $r>0$,
\begin{equation}\label{restricflow}
\psi_t^{\hat{S}_i}\mid_{H_i^{-1}(r)}~=~\psi_{rt}^{S_i}\mid_{H_i^{-1}(r)}~~~~,~~i=\epsilon,2.
\end{equation}
Denoting by $H_0$ the Hamiltonian function
\begin{equation}\nonumber
W~:~\mathcal{O}\longrightarrow\mathbb{R},~~~W(\eta)=\eta(W)
\end{equation}
and by $S_0$ its Hamiltonian vector field, it follows from (\ref{legendreduallorentz}) that $H_2=\epsilon H_\epsilon+H_0$ and, hence,
\begin{equation}\label{sumvectorfields}
S_2~=~\epsilon S_\epsilon+S_0.
\end{equation}
Also, it is well known (see, for instance, \cite{Ziller}) that the flow of $S_0$ is simply
\begin{equation}\label{flowS1}
\psi_t^{S_0}=(D\psi_t^W)^\ast~:~T^\ast M\longrightarrow T^\ast M.
\end{equation}
For $i=\epsilon,2$, let $\mathcal{C}^{(i)}$ be the contact plane distribution associated to $(1/2)H_i^2$, and let
\begin{equation}\nonumber
\ell_\xi^{(i)}(t)\in\Lambda\bigl(\mathcal{C}^{(i)}_\xi\bigr)
\end{equation}
be the Jacobi curve of $(1/2)H_i^2$ based at $\xi$. It will be convenient to introduce the following curves on $\Lambda(T_\xi T^\ast M)$,
\begin{equation}\nonumber
\tilde{\ell}_\xi^{(i)}(t)~=~D\psi_{-t}^{S_i}\bigl(\mathcal{V}_{\psi_t^{S_i}(\xi)}T^\ast M\bigr)~~~,~~i=\epsilon,2.
\end{equation}
where $\mathcal{V}T^\ast M$ is the distribution on $T^\ast M$ given by the tangent spaces to the fibers of $\tau:T^\ast M\rightarrow M$. These curves are related to the former ones by

\begin{lemma}
For $i=2,\epsilon$, we have
\begin{equation}\label{eq3}
\ell_\xi^{(i)}(t/H_i(\xi))=\tilde{\ell}_\xi^{(i)}(t)\cap\mathcal{C}^{(i)}_\xi.
\end{equation}
\end{lemma}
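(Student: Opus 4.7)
The plan is to identify $\mathcal{L}^{(i)}$ with $\mathcal{V}T^\ast M\cap\mathcal{C}^{(i)}$ and then to compare the two flows $\psi_s^{\hat{S}_i}$ and $\psi_t^{S_i}$ on vectors tangent to $\mathcal{C}^{(i)}$, the reparametrization $s=t/H_i(\xi)$ appearing naturally from this comparison.

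First I would rewrite $\mathcal{L}_\eta^{(i)}=T_\eta\bigl(\tau^{-1}(\tau(\eta))\cap H_i^{-1}(H_i(\eta))\bigr)$ as $\mathcal{V}_\eta T^\ast M\cap\ker DH_i(\eta)$, and observe via \eqref{contactplane} together with $\hat{S}_i=H_i S_i$ that $\mathcal{C}_\eta^{(i)}=\ker DH_i(\eta)\cap\ker\alpha_\eta$. Since vertical vectors already annihilate $\alpha$, this gives the useful identification $\mathcal{L}_\eta^{(i)}=\mathcal{V}_\eta T^\ast M\cap\mathcal{C}_\eta^{(i)}$.

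Next, writing $\hat\phi_s=\psi_s^{\hat{S}_i}$ and $\phi_t=\psi_t^{S_i}$, I would upgrade \eqref{restricflow} to the pointwise identity $\hat\phi_s(\xi')=\phi_{H_i(\xi')s}(\xi')$ valid for every nearby $\xi'$, since $H_i$ is constant along the trajectories of both vector fields. Differentiating in $\xi'$ at $\xi$ yields
\[
D\hat\phi_s(\xi)v~=~D\phi_t(\xi)v+s\,\bigl(DH_i(\xi)v\bigr)\,S_i(\eta),\qquad\eta=\phi_t(\xi),~t=H_i(\xi)s,
\]
so on $\ker DH_i(\xi)$, and in particular on $\mathcal{C}_\xi^{(i)}$, the two differentials coincide; the analogous computation at $\eta$ with $-s$ and $-t$ gives $D\hat\phi_{-s}(\eta)=D\phi_{-t}(\eta)$ on $\ker DH_i(\eta)$, and since $\mathcal{L}_\eta^{(i)}\subset\ker DH_i(\eta)$,
\[
\ell_\xi^{(i)}(s)~=~D\hat\phi_{-s}\bigl(\mathcal{L}_\eta^{(i)}\bigr)~=~D\phi_{-t}\bigl(\mathcal{L}_\eta^{(i)}\bigr).
\]

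To conclude, I would verify that $\phi_t$ preserves $\mathcal{C}^{(i)}$. Since $H_i$ is positively $1$-homogeneous in the fibers, Euler's identity gives $\alpha(S_i)=H_i$, and Cartan's formula then yields $\mathcal{L}_{S_i}\alpha=d(\alpha(S_i))-dH_i=0$; as $\phi_t$ also preserves $H_i$ by definition, it preserves $\mathcal{C}^{(i)}=\ker\alpha\cap\ker DH_i$. Combining this with the previous step,
\[
\ell_\xi^{(i)}(s)~=~D\phi_{-t}\bigl(\mathcal{V}_\eta T^\ast M\bigr)\cap D\phi_{-t}\bigl(\mathcal{C}_\eta^{(i)}\bigr)~=~\tilde\ell_\xi^{(i)}(t)\cap\mathcal{C}_\xi^{(i)},
\]
which is the claimed identity. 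The delicate point, and the step I would be most careful about, is the differentiation of the time reparametrization $\hat\phi_s(\xi')=\phi_{H_i(\xi')s}(\xi')$: the correction term $s\,(DH_i(\xi)v)\,S_i(\eta)$ is genuinely present off $\ker DH_i$, and the whole content of the lemma is that on the contact hyperplane, where $DH_i$ vanishes, the two Hamiltonian flows act identically once one rescales time by $H_i(\xi)$.
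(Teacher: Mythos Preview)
Your argument is correct and reaches the same conclusion as the paper, but the route differs in a dual way. The paper uses the decomposition $\mathcal{V}T^\ast M=\mathcal{L}^{(i)}\oplus{\rm span}\{C\}$ together with the fact that $[C,S_i]=0$ (from the $1$-homogeneity of $H_i$), so that $\psi_t^{S_i}$ preserves $C$; applying \eqref{restricflow} on vectors tangent to the level set then gives $\tilde{\ell}_\xi^{(i)}(t)=\ell_\xi^{(i)}(t/H_i(\xi))\oplus{\rm span}\{C_\xi\}$, and intersecting with $\mathcal{C}_\xi^{(i)}$ finishes. You instead use the intersection description $\mathcal{L}^{(i)}=\mathcal{V}T^\ast M\cap\mathcal{C}^{(i)}$ and show, via Cartan's formula and $\alpha(S_i)=H_i$, that $\psi_t^{S_i}$ preserves $\mathcal{C}^{(i)}$; the direct differentiation of the pointwise identity $\hat\phi_s(\xi')=\phi_{H_i(\xi')s}(\xi')$ then isolates exactly the correction term that vanishes on $\ker DH_i$. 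Both approaches hinge on the same homogeneity of $H_i$, expressed either as $[C,S_i]=0$ or as $\mathcal{L}_{S_i}\alpha=0$. Your version has the virtue of making the reparametrization step fully explicit (the paper's appeal to \eqref{restricflow} tacitly uses that the differentials of the two flows agree along the level set), while the paper's direct-sum argument is slightly shorter once one accepts that step.
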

\begin{proof}
First note that $\mathcal{V}T^\ast M=\mathcal{L}^{(i)}\oplus{\rm span}\{C\}$. Also, as $H_i$ is positively homogeneous of degree 1, we have $[C,S_i]=0$, and hence $(\psi_t^{S_i})^\ast C=C$. Therefore, it follows from $(\ref{restricflow})$ that $\tilde{\ell}_\xi^{(i)}(t)=\ell_\xi^{(i)}\bigl(t/H_i(\xi)\bigr)\oplus{\rm span}\{C_\xi\}$ and, hence,
$\ell_\xi^{(i)}\bigl(t/H_i(\xi)\bigr)=\tilde{\ell}_\xi^{(i)}(t)\cap\mathcal{C}^{(i)}_\xi$.
\end{proof}

The proof of Theorem \ref{theoremcurvature} will consist in establishing the following
\begin{thm}\label{theoremcurves}
For the symplectic isomorphism ${\rm T}:\mathcal{C}^{(2)}_\xi\rightarrow\mathcal{C}^{(\epsilon)}_\xi$ defined below,
\begin{equation}\label{relatingcurves}
{\rm T}\ell_\xi^{(2)}\bigl(g(t)\bigr)~=~\ell_\xi^{(\epsilon)}\bigl(t\bigr),
\end{equation}
where $g(t)=\frac{1}{H_2(\xi)\sigma}\ln(1+\epsilon\sigma H_\epsilon(\xi)t)$.
\end{thm}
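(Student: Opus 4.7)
The strategy is to factor the flow of $S_2$ using the decomposition $S_2=\epsilon S_\epsilon+S_0$ (see \eqref{sumvectorfields}) together with the fact that $\psi_t^{S_0}$ is the cotangent lift of $\psi_t^W$ (see \eqref{flowS1}) and therefore preserves the vertical distribution $\mathcal{V}T^\ast M$. After this factorization, $\tilde{\ell}_\xi^{(2)}(t)$ and $\tilde{\ell}_\xi^{(\epsilon)}(h(t))$ will coincide as Lagrangian subspaces of $T_\xi T^\ast M$ for a suitable $h$, and Theorem \ref{theoremcurves} will follow by intersecting with the contact planes via \eqref{eq3}, once the symplectic identification ${\rm T}$ is properly defined.

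First I would dualize the homothety: from $(\psi_t^W)^\ast F=e^{-\sigma t}F$ (or its reverse analogue for $\tilde F$) together with the description of $F_\epsilon^\ast$ as critical values on the indicatrix (Proposition \ref{criticalvalues}), one obtains $H_\epsilon\circ\psi_t^{S_0}=e^{\sigma t}H_\epsilon$. Differentiating at $t=0$ yields $S_0(H_\epsilon)=\sigma H_\epsilon$, hence $[S_0,S_\epsilon]=\sigma S_\epsilon$ and consequently $(\psi_t^{S_0})_\ast S_\epsilon=e^{-\sigma t}S_\epsilon$. A direct differentiation of $\psi_t^{S_0}\circ\psi_{h(t)}^{S_\epsilon}$ then forces $h'(t)=\epsilon e^{\sigma t}$, $h(0)=0$, i.e.\ $h(t)=\epsilon(e^{\sigma t}-1)/\sigma$, giving the factorization $\psi_t^{S_2}=\psi_t^{S_0}\circ\psi_{h(t)}^{S_\epsilon}$. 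Since $\psi_t^{S_0}$ preserves the vertical, applying $D\psi_{-t}^{S_2}$ to $\mathcal{V}_{\psi_t^{S_2}(\xi)}T^\ast M$ collapses to applying $D\psi_{-h(t)}^{S_\epsilon}$ to $\mathcal{V}_{\psi_{h(t)}^{S_\epsilon}(\xi)}T^\ast M$, so $\tilde{\ell}_\xi^{(2)}(t)=\tilde{\ell}_\xi^{(\epsilon)}(h(t))$ as Lagrangian subspaces of $T_\xi T^\ast M$.

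Next I would define the symplectic isomorphism ${\rm T}\colon\mathcal{C}_\xi^{(2)}\to\mathcal{C}_\xi^{(\epsilon)}$ by ${\rm T}(v)=v-\bigl(dH_\epsilon(v)/H_\epsilon(\xi)\bigr)C_\xi$, i.e.\ by projection along the tautological direction onto $\mathcal{C}_\xi^{(\epsilon)}$. The identity $\omega(C_\xi,\cdot)=\alpha_\xi$ together with $\mathcal{C}_\xi^{(2)}\subset\ker\alpha_\xi$ gives $\omega(v,C_\xi)=0$ for $v\in\mathcal{C}_\xi^{(2)}$, whence ${\rm T}$ is $\omega$-preserving. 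Because $1$-homogeneity of $H_i$ yields $[C,S_i]=0$, the tautological vector $C_\xi$ lies in $\tilde{\ell}_\xi^{(i)}(s)$ for every $s$ and both $i$; hence for $v\in\tilde{\ell}_\xi^{(2)}(t)\cap\mathcal{C}_\xi^{(2)}$ the vector ${\rm T}(v)$ still lies in $\tilde{\ell}_\xi^{(2)}(t)=\tilde{\ell}_\xi^{(\epsilon)}(h(t))$, now inside $\mathcal{C}_\xi^{(\epsilon)}$. A dimension count then gives ${\rm T}\bigl(\tilde{\ell}_\xi^{(2)}(t)\cap\mathcal{C}_\xi^{(2)}\bigr)=\tilde{\ell}_\xi^{(\epsilon)}(h(t))\cap\mathcal{C}_\xi^{(\epsilon)}$; via \eqref{eq3} this is ${\rm T}\,\ell_\xi^{(2)}(t/H_2(\xi))=\ell_\xi^{(\epsilon)}(h(t)/H_\epsilon(\xi))$. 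Solving $h(H_2(\xi)g(t))/H_\epsilon(\xi)=t$ for $g$ yields exactly the claimed $g(t)=\frac{1}{H_2(\xi)\sigma}\ln(1+\epsilon\sigma H_\epsilon(\xi)t)$.

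The step I expect to be most delicate is the sign bookkeeping that pins down $h(t)=\epsilon(e^{\sigma t}-1)/\sigma$ with the correct $\epsilon$: the convention for the Poisson bracket, the identity relating $[X_f,X_g]$ and $X_{\{f,g\}}$, and the reverse-translation case $\epsilon=-1$ (where $F_\epsilon=\tilde F$) must all be reconciled so as to produce exactly the $g$ appearing in the statement. Once the signs are in order, the symplecticity of ${\rm T}$ follows immediately from $\alpha_\xi=\omega(C_\xi,\cdot)$ and $\omega(C_\xi,C_\xi)=0$, and the inclusion $C_\xi\in\tilde{\ell}_\xi^{(i)}(s)$ is a one-line consequence of $[C,S_i]=0$.
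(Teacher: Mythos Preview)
Your proposal is correct and follows essentially the same route as the paper: derive $[S_0,S_\epsilon]=\sigma S_\epsilon$ from the homothety, factor $\psi_t^{S_2}=\psi_t^{S_0}\circ\psi_{f(t)}^{S_\epsilon}$ with $f(t)=\epsilon(e^{\sigma t}-1)/\sigma$, use that $\psi_t^{S_0}$ preserves verticals to get $\tilde{\ell}_\xi^{(2)}(t)=\tilde{\ell}_\xi^{(\epsilon)}(f(t))$, and then pass to the contact planes via \eqref{eq3}. Your explicit formula ${\rm T}(v)=v-\bigl(dH_\epsilon(v)/H_\epsilon(\xi)\bigr)C_\xi$ is exactly the paper's map (the paper writes $X=Z+aC_\xi+b(S_\epsilon)_\xi$ and shows $b=0$, which forces $a=dH_\epsilon(X)/H_\epsilon(\xi)$), and your direct differentiation of the composed flow is precisely the content of the paper's Lemma~\ref{lemmaflow} specialized to this situation.
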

\noindent {\it Definition of the map ${\rm T}$:} From $(\ref{decomposition})$, any $X\in\mathcal{C}^{(2)}_\xi$ may be uniquely expressed as $X=Z+aC_\xi+b(S_\epsilon)_\xi$, with $Z\in\mathcal{C}^{(\epsilon)}_\xi$. We define ${\rm T} (X)=Z$. To see that it is symplectic, we first note that we must have $b=0$. Indeed, from $(\ref{contactplane})$ we have $\omega(X,C_\xi)=\omega(Z,C_\xi)=0$, hence $b\,\omega((S_\epsilon)_\xi,C_\xi)=0$. But, $\omega\bigl((S_\epsilon)_\xi,C_\xi\bigr)=-F_\epsilon(\xi)\neq 0$, so
$b=0$. Now, the fact that ${\rm T}$ is symplectic is an immediate consequence of $(\ref{contactplane})$.
\\\\
In order to prove $(\ref{relatingcurves})$, we start by showing that the hypothesis $(\ref{homothetic})$ implies the following relation between the flows $\psi_t^{S_2},\psi_t^{S_\epsilon},\psi_t^{S_0}$:
\begin{prop}\label{propflow}
We have that $\psi_t^{S_2}=\psi_t^{S_0}\circ\psi_{f(t)}^{S_\epsilon}$, where \[f(t)=\begin{cases}
(\epsilon/\sigma)({\rm e}^{\sigma t}-1)& \text{if $\sigma\not=0$,}\\
t& \text{if $\sigma=0$,}
\end{cases}
\]
for $t$ in a neighborhood of $0$.
\end{prop}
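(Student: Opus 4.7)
My plan is to define the candidate flow $\phi_t:=\psi_t^{S_0}\circ\psi_{f(t)}^{S_\epsilon}$, observe that $\phi_0=\mathrm{id}$ as soon as $f(0)=0$, and then show that $\phi_t$ satisfies the same ODE as $\psi_t^{S_2}$, namely $\dot\phi_t(\xi)=S_2(\phi_t(\xi))$. Uniqueness of solutions to ODEs will then force $\phi_t=\psi_t^{S_2}$, and the relation $\dot f(t)=\epsilon\,e^{\sigma t}$ that arises from matching the derivatives, integrated with $f(0)=0$, will produce exactly the function $f$ in the statement.

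The crucial ingredient is the way $H_\epsilon$ transforms under the flow $\psi_t^{S_0}$. Since $\psi_t^{S_0}=(D\psi_t^W)^\ast$ is the cotangent lift of $\psi_t^W$, and since the homothety condition $(\psi_t^W)^\ast F=e^{-\sigma t}F$ passes to the reverse metric $\tilde F$ (because $(\psi_t^W)^\ast\tilde F(v)=F(-D\psi_t^W v)=e^{-\sigma t}\tilde F(v)$), the diffeomorphism $D\psi_{-t}^W$ maps $\Sigma^{F_\epsilon}_{\psi_t^W(p)}$ onto the rescaled indicatrix $e^{\sigma t}\Sigma^{F_\epsilon}_p$. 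Using the characterization of $F_\epsilon^\ast(\xi)$ as the positive critical value of $\xi|_{\Sigma^{F_\epsilon}_{\tau(\xi)}}$ provided by Proposition \ref{criticalvalues}, and the fact that the cotangent lift acts by $\psi_t^{S_0}(\xi)(w)=\xi(D\psi_{-t}^W w)$, a direct computation yields
\begin{equation}\label{rescalingH}
H_\epsilon\circ\psi_t^{S_0}~=~e^{\sigma t}\,H_\epsilon.
\end{equation}
Since $\psi_t^{S_0}$ is a symplectomorphism, pullback commutes with the formation of Hamiltonian vector fields, so \eqref{rescalingH} translates into $(\psi_t^{S_0})^\ast S_\epsilon=e^{\sigma t}S_\epsilon$, or equivalently
\begin{equation}\label{pushforwardS}
S_\epsilon\bigl(\psi_t^{S_0}(\eta)\bigr)~=~e^{\sigma t}\,D\psi_t^{S_0}(\eta)\,S_\epsilon(\eta).
\end{equation}

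With \eqref{pushforwardS} in hand, the rest is routine. Writing $\eta(t)=\psi_{f(t)}^{S_\epsilon}(\xi)$, the chain rule gives
\begin{equation}
\frac{d}{dt}\phi_t(\xi)~=~S_0\bigl(\phi_t(\xi)\bigr)+\dot f(t)\,D\psi_t^{S_0}(\eta(t))\,S_\epsilon(\eta(t))~=~S_0\bigl(\phi_t(\xi)\bigr)+\dot f(t)\,e^{-\sigma t}\,S_\epsilon(\phi_t(\xi)),
\end{equation}
where \eqref{pushforwardS} was used in the last equality. Comparing with the decomposition $S_2=\epsilon S_\epsilon+S_0$ from \eqref{sumvectorfields}, the requirement $\dot f(t)\,e^{-\sigma t}=\epsilon$ fixes $\dot f(t)=\epsilon\,e^{\sigma t}$, whose integration with $f(0)=0$ produces the $f$ in the statement (with $f(t)=\epsilon t$ in the limit $\sigma\to 0$).

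The main obstacle is precisely the first step, namely establishing \eqref{rescalingH}. One must carefully unpack how the cotangent lift interacts with the Legendre dual, keep track of the rescaling of the indicatrix produced by the homothety, and handle the reverse case $\epsilon=-1$ correctly by noting that $\tilde F$ inherits the homothety. Once \eqref{rescalingH} (and hence \eqref{pushforwardS}) is secured, the matching of the ODEs for $\phi_t$ and $\psi_t^{S_2}$ is a direct calculation.
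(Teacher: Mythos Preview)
Your proof is correct and rests on the same key relation as the paper's, namely $H_\epsilon\circ\psi_t^{S_0}=e^{\sigma t}H_\epsilon$ and its consequence $(\psi_t^{S_0})^\ast S_\epsilon=e^{\sigma t}S_\epsilon$. The paper, however, packages the final step differently: it first differentiates the rescaling at $t=0$ to obtain the bracket $[S_0,S_\epsilon]=\sigma S_\epsilon$, and then invokes the general Lemma~\ref{lemmaflow} (which expresses $\psi_t^{X_0+X_1}$ as $\psi_t^{X_1}\circ\psi_t^Z$ with $Z_t=(\psi_t^{X_1})^\ast X_0$), integrating the bracket relation to identify $Z_t=\epsilon e^{\sigma t}S_\epsilon$ and hence $f$. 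You instead read off the pushforward relation directly from the symplectic covariance of Hamiltonian vector fields and verify the ODE for the candidate composition by hand. Your route is slightly more self-contained, since it bypasses Lemma~\ref{lemmaflow} altogether; the paper's route has the advantage of isolating that lemma as a reusable general fact. Your derivation also correctly yields $f(t)=\epsilon t$ in the case $\sigma=0$, consistent with the $\sigma\to 0$ limit of the formula for $\sigma\neq 0$.
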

\noindent For the proof of the above proposition, as it is clear that $(\ref{homothetic})$ also holds for the reverse metric $\tilde{F}$, it follows from $(\ref{flowS1})$ that
\begin{equation}
H_\epsilon\circ\psi_t^{S_0}~=~{\rm e}^{\sigma t}H_\epsilon,
\end{equation}
 at least in a neighborhood of $t=0$ (and whenever the flow $\psi_t^W$ is well-defined).
By taking the derivative of this relation at $t=0$, and recalling that $DH_\epsilon(X)=\omega(X,S_\epsilon)$, we get $\omega(S_0,S_\epsilon)=\sigma H_\epsilon$.
Hence, as $\omega(S_0,S_\epsilon)$ is equal to the Poisson bracket $\{H_\epsilon,H_0\}$, it follows that
\begin{equation}\label{bracket}
[S_0,S_\epsilon]~=~\sigma S_\epsilon.
\end{equation}
We can now apply the following general lemma to prove Proposition \ref{propflow} (for a proof, see \cite{ADH}):
\begin{lemma}\label{lemmaflow}
Let $X_0$ and $X_1$ be vector fields on some manifold, and let $Z$ be the time-dependent vector field defined by $Z_t=(\psi_t^{X_1})^\ast X_0$. Then, $\psi_t^{X_0+X_1}=\psi_t^{X_1}\circ\psi_t^Z$, where, for each $x$, $\psi_t^Z(x)$ is the evaluation at $t$ of the sotution to $\dot{x}(t)=Z_{(t,x(t))}$, $x(0)=x$. \end{lemma}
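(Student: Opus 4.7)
The plan is to prove the lemma by the standard uniqueness-of-flow argument: define $\phi_t(x) := \psi_t^{X_1}(\psi_t^Z(x))$ and verify that $\phi_t$ satisfies the same initial value problem as $\psi_t^{X_0+X_1}$. Clearly $\phi_0(x) = x$, so everything comes down to computing $\frac{d}{dt}\phi_t(x)$ and checking it equals $(X_0 + X_1)(\phi_t(x))$.

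The computation itself is a direct application of the chain rule. Writing $y(t) = \psi_t^Z(x)$, so that $\dot{y}(t) = Z_{(t,y(t))}$ by definition, I would differentiate $\phi_t(x) = \psi_t^{X_1}(y(t))$ with respect to $t$ to get
\begin{equation}\nonumber
\dot{\phi}_t(x) = X_1\bigl(\psi_t^{X_1}(y(t))\bigr) + D\psi_t^{X_1}(y(t))\bigl(Z_{(t,y(t))}\bigr).
\end{equation}
The first term is already $X_1(\phi_t(x))$. For the second term, the whole point of defining $Z_t = (\psi_t^{X_1})^\ast X_0$ is precisely to make $D\psi_t^{X_1}(y)\bigl(Z_{(t,y)}\bigr) = X_0(\psi_t^{X_1}(y))$ for every $y$; substituting $y = y(t)$ gives $X_0(\phi_t(x))$. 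Adding the two contributions produces $(X_0+X_1)(\phi_t(x))$, as required.

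By the uniqueness theorem for ODEs, $\phi_t(x) = \psi_t^{X_0+X_1}(x)$ on the common domain of existence, which is the desired identity. There is no genuine obstacle here; the only subtlety to flag is that $Z$ is a \emph{time-dependent} vector field, so $\psi_t^Z$ must be interpreted as the evaluation at time $t$ of the solution of the non-autonomous ODE $\dot{x}(t) = Z_{(t,x(t))}$ starting at $x$, which is exactly how the statement is phrased. The argument is purely local in $t$ around $0$, which matches the use we make of the lemma in Proposition \ref{propflow}.
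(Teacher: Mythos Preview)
Your proof is correct and is the standard uniqueness-of-flows argument. Note that the paper itself does not prove this lemma but simply refers the reader to \cite{ADH}; there is therefore no in-paper proof to compare against, and your argument is exactly the direct verification one would expect.
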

\begin{proof}[Proof of Proposition \ref{propflow}] From $(\ref{sumvectorfields})$ and the lemma above, $\psi_t^{S_2}=\psi_t^{S_0}\circ\psi_t^Z$, where $Z_t=\epsilon(\psi_t^{S_0})^\ast S_\epsilon$. Deriving in $t$ the last equation, and using $(\ref{bracket})$, we obtain, successively,
\begin{eqnarray}
\frac{d}{dt}(\psi_t^{S_0})^\ast S_\epsilon & = & (\psi_t^{S_0})^\ast[S_0,S_\epsilon]\nonumber\\
& = & \sigma(\psi_t^{S_0})^\ast S_\epsilon.\nonumber
\end{eqnarray}
Hence, $Z_t=\epsilon{\rm e}^{\sigma t}S_\epsilon$. It is now immediate that $\psi_t^Z=\psi_{\frac{\epsilon}{\sigma}({\rm e}^{\sigma t}-1)}^{S_\epsilon}$.

\end{proof}


\begin{proof}[Proof of Theorem \ref{theoremcurves}]
It follows from Proposition \ref{propflow}, and from the fact that $\psi_t^{S_0}$ preserves the fibers of $\tau:T^\ast M\rightarrow M$, that
\begin{equation}\nonumber
\tilde{\ell}_\xi^{(2)}(t)~=~\tilde{\ell}_\xi^{(\epsilon)}(f(t)).
\end{equation}
Therefore, having in mind (\ref{eq3}) and the definition of ${\rm T}$, we immediately get
\begin{equation}\nonumber
{\rm T}(\ell_\xi^{(2)}\bigl(t/H_2(\xi)\bigr))=\ell_\xi^{(\epsilon)}\bigl(f(t)/H_\epsilon(\xi)\bigr).
\end{equation}
This, in turn, is equivalent to (\ref{relatingcurves}).
\end{proof}

\noindent In order to apply Proposition \ref{propfanning} (at $t=0$) to (\ref{relatingcurves}), we first observe that
\begin{equation}
\frac{1}{2}\{g(t),t\}|_{t=0}=\frac{1}{4}\sigma^2H_\epsilon(\xi)^2~~~{\rm and}~~~\dot{g}(0)^2=\frac{H_\epsilon(\xi)^2}{H_2(\xi)^2}.
\end{equation}
\noindent Therefore, it follows from Proposition \ref{propfanning}, and the definition of flag curvature, that
\begin{equation}\nonumber
K_{\hat{F}}\Bigl(\mathscr{L}_{\hat{F}}^{-1}(\xi)\hspace{0.05cm},\hspace{0.05cm}{\rm span}\{\mathscr{L}_{\hat{F}}^{-1}(\xi),w\}\Bigr)~=~K_{{F_\epsilon}}\Bigl(\mathscr{L}_{F_\epsilon}^{-1}(\xi)\hspace{0.05cm},\hspace{0.05cm}{\rm span}\{\mathscr{L}_{F_\epsilon}^{-1}(\xi),\tilde{w}\}\Bigr)-\frac{1}{4}\sigma^2,
\end{equation}
where $\tilde{w}$ is the image of $w$ under the map
\begin{equation}\label{map}
(D_f\mathscr{L}_{F_\epsilon}(u))^{-1}\circ{\rm T}\circ \bigl(D_f\mathscr{L}_{\hat{F}}(v)\bigr)~:~\ker g_u^{\hat{F}}(u\hspace{0.05cm},\hspace{0.05cm}\cdot)\longrightarrow\ker g^{F_\epsilon}_v(v\hspace{0.05cm},\hspace{0.05cm}\cdot)
\end{equation}
 and $v:=\mathscr{L}_{F_\epsilon}^{-1}(\xi)$. Theorem \ref{theoremcurvature} now follows from (\ref{remarkreverse2}) and the following lemma.
\begin{lemma}
The map $(\ref{map})$ is multiplication by $\epsilon H_\epsilon(\xi)/H_2(\xi)$.
\end{lemma}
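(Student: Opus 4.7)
The plan is to compute the map $(\ref{map})$ explicitly by exploiting two key simplifications: first, that under the canonical identification $\mathcal{V}_\xi T^\ast M \cong T^\ast_p M$ (with $p = \tau(\xi)$), the tautological vector $C_\xi$ corresponds to $\xi$ itself and $\mathcal{L}^{(\epsilon)}_\xi$ corresponds to the annihilator $\{\mu\in T^\ast_pM : \mu(v)=0\}$ of $v := \mathscr{L}_{F_\epsilon}^{-1}(\xi)$; second, that the hypothesis $g^{\hat F}_u(u,w)=0$, i.e.\ $\xi(w)=0$, eliminates most of the terms produced by the second differentiation of the translation identity below. Since $D_f\mathscr{L}_{\hat F}(u)(w) = g^{\hat F}_u(w,\cdot)$ is vertical, the map ${\rm T}$ acts on it as the projection onto $\mathcal{L}^{(\epsilon)}_\xi$ along ${\rm span}\{C_\xi\}$; concretely, ${\rm T}(g^{\hat F}_u(w,\cdot)) = g^{\hat F}_u(w,\cdot) - c\,\xi$, where $c$ is determined by requiring the right-hand side to annihilate $v$.

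First I would rewrite the translation condition in the unified form $F_\epsilon(\epsilon(y - W\hat F(y)))^2 = \hat F(y)^2$, valid for $y\in\hat A$ in both the straight and reverse cases, and note that $\epsilon(u - W\hat F(u)) = (H_2/H_\epsilon)\,v$ by Proposition \ref{propdichotomy}. Differentiating this identity twice at $y=u$ in directions $(w, x)$, using the degree-$0$ homogeneity of $g^{F_\epsilon}$ in its basepoint, the identities $g^{F_\epsilon}_v(v,W)=H_0$ and $1+\epsilon H_0/H_\epsilon = \epsilon H_2/H_\epsilon$, and the crucial simplification $\xi(w)=0$, yields the clean relation
\[ (\epsilon H_2/H_\epsilon)\, g^{\hat F}_u(w, x) \;=\; g^{F_\epsilon}_v(w, x) - g^{F_\epsilon}_v(w, W)\,\xi(x)/H_2. \]

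On the other hand, using $v = (\epsilon H_\epsilon/H_2)(u - W H_2)$ together with $\xi(w)=0$, one computes $g^{\hat F}_u(w,v) = -\epsilon H_\epsilon\, g^{\hat F}_u(w,W)$, whence $c = g^{\hat F}_u(w,v)/\xi(v) = -\epsilon\, g^{\hat F}_u(w,W)/H_\epsilon$. Setting $x = W$ in the displayed relation gives the compatibility $H_2^2\, g^{\hat F}_u(w,W) = H_\epsilon^2\, g^{F_\epsilon}_v(w,W)$, which, when substituted back, yields
\[ {\rm T}(g^{\hat F}_u(w,\cdot)) \;=\; (\epsilon H_\epsilon/H_2)\, g^{F_\epsilon}_v(w,\cdot); \]
applying $(D_f\mathscr{L}_{F_\epsilon}(v))^{-1}$ then gives the stated value $(\epsilon H_\epsilon/H_2)\, w$.

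The main obstacle will be the clean execution of the double differentiation of $F_\epsilon(\epsilon(y-W\hat F(y)))^2$ at $y=u$: one must carefully combine the chain rule, the nontrivial fiber-dependence of $g^{F_\epsilon}$ at the moving basepoint (which produces a term involving $D^2\hat F(u)$), and the algebraic identities linking $H_\epsilon$, $H_2$, $H_0$ and $\epsilon$ so that the straight and reverse cases are treated uniformly. Once the constraint $\xi(w)=0$ is imposed, however, all the cumbersome cross-terms collapse, and the identification becomes a one-line calculation.
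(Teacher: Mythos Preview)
Your proposal is correct, and the outlined computation goes through exactly as you say: differentiating $F_\epsilon(\epsilon(y-W\hat F(y)))^2=\hat F(y)^2$ twice at $y=u$, the Cartan-tensor term drops out because one slot is $\phi(u)\parallel v$, the $D^2\hat F(u)(w,\cdot)$ term becomes $g^{\hat F}_u(w,\cdot)/H_2$ once $\xi(w)=0$, and the identity $1+\epsilon H_0/H_\epsilon=\epsilon H_2/H_\epsilon$ packages everything into your displayed relation; the rest is bookkeeping.

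The approach, however, is genuinely dual to the paper's. The paper stays entirely on the cotangent side: it identifies ${\rm T}|_{\mathcal{L}^{(2)}_\xi}$ with the projection $\ker\mathscr{L}^{-1}_{F_\epsilon}(\xi)\oplus{\rm span}\{\xi\}\to\ker\mathscr{L}^{-1}_{F_\epsilon}(\xi)$ (your same first step), but then, instead of relating the two fundamental tensors, it takes the \emph{fiber derivative of the Legendre-dual relation} $\mathscr{L}^{-1}_{\hat F}(\xi)/H_2(\xi)=W+\epsilon\,\mathscr{L}^{-1}_{F_\epsilon}(\xi)/H_\epsilon(\xi)$ to express $D_f\mathscr{L}^{-1}_{F_\epsilon}(\xi)(\zeta)$ in terms of $D_f\mathscr{L}^{-1}_{\hat F}(\xi)(\zeta)$, and plugs in $\zeta={\rm T}(\eta)$. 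Your route differentiates the \emph{tangent-side} navigation identity to compare $g^{\hat F}_u$ and $g^{F_\epsilon}_v$ directly, which is arguably more elementary (no inverse Legendre transforms to manipulate) and makes the role of $\xi(w)=0$ very transparent; the paper's route is more in keeping with the surrounding Hamiltonian formalism and avoids computing the second derivative of $\hat F$. Both compute the same projection ${\rm T}$ and rely on the same algebraic identities among $H_\epsilon,H_0,H_2$.
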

\begin{proof}
Via the canonical identification $\mathcal{V}_\xi T^\ast M=T_{\tau(\xi)}^\ast M$, $C_\xi$ corresponds to $\xi$ and, from Proposition \ref{propinverse},  $\mathcal{L}_\xi^{(\epsilon)}$ and $\mathcal{L}_\xi^{(2)}$ correspond to $\ker\mathscr{L}_{F_\epsilon}^{-1}(\xi)$ and $\ker\mathscr{L}_{\hat{F}}^{-1}(\xi)$, respectively. It follows that ${\rm T}\mid_{\mathcal{L}_\xi^{(2)}}:\mathcal{L}_\xi^{(2)}\rightarrow\mathcal{L}_\xi^{(\epsilon)}$ is the restriction to $\ker\mathscr{L}_{\hat{F}}^{-1}(\xi)$ of the projection map
$\ker\mathscr{L}_{F_\epsilon}^{-1}(\xi)\oplus{\rm span}\{\xi\}\rightarrow\ker\mathscr{L}_{F_\epsilon}^{-1}(\xi)$, and hence
\begin{equation}\label{eq100}
{\rm T}(\eta)~=~\eta-\frac{\mathscr{L}_{F_\epsilon}^{-1}(\xi)(\eta)}{\mathscr{L}_{F_\epsilon}^{-1}(\xi)(\xi)}\xi,
\end{equation}
for all $\eta\in\ker\mathscr{L}_{\hat{F}}^{-1}(\xi)$. Taking the fiber derivative of $(\ref{legendrerelations})$, we get, for all $\zeta\in T_{\tau(\xi)}^\ast M$,
\begin{eqnarray}
D_f\mathscr{L}_{F_\epsilon}^{-1}(\xi)(\zeta) & = &   \frac{\epsilon H_\epsilon(\xi)}{H_2(\xi)}D_f\mathscr{L}_{\hat{F}}^{-1}(\xi)(\zeta)-
\frac{\epsilon H_\epsilon(\xi)}{H_2(\xi)^3}\mathscr{L}_{\hat{F}}^{-1}(\xi)(\zeta)\mathscr{L}_{\hat{F}}^{-1}(\xi)\nonumber\\
& & +\frac{1}{H_\epsilon(\xi)^2}\mathscr{L}_{F_\epsilon}^{-1}(\xi)(\zeta)\mathscr{L}_{F_\epsilon}^{-1}(\xi).\nonumber
\end{eqnarray}
Letting, in the above equality, $\zeta$ be the right hand side of $(\ref{eq100})$, and using that $\mathscr{L}_{F_\epsilon}^{-1}(\xi)(\zeta)=\mathscr{L}_{\hat{F}}^{-1}(\xi)(\eta)=0$, $\mathscr{L}_{F_\epsilon}^{-1}(\xi)(\xi)=H_\epsilon(\xi)^2$, $\mathscr{L}_{\hat{F}}^{-1}(\xi)(\xi)=H_2(\xi)^2$ and $D_f\mathscr{L}_{\hat{F}}^{-1}(\xi)(\xi)=\mathscr{L}_{\hat{F}}^{-1}(\xi)$, we get $D_f\mathscr{L}_{F_\epsilon}^{-1}(\xi)(\zeta)=\epsilon (H_\epsilon(\xi)/H_2(\xi))D_f\mathscr{L}_{\hat{F}}^{-1}(\xi)(\eta)$ and the lemma follows.
\end{proof}

\begin{proof}[Proof of Theorem \ref{geodesicflow}]
 The expression for the geodesics follows easily from Proposition \ref{propflow}. Indeed, observe that the geodesics of the reverse pseudo-Finsler metric $\tilde{F}$ can be expressed as $\tilde{\gamma}(t)=\gamma(-t)$, where $\gamma$ is a geodesic of $F$. For the last claim, we need to prove that if $\hat{\gamma}$ is always a geodesic for the straight or for the reverse translation, but it cannot change from one to the other. Let us show first  $\hat{\gamma}$ is $\hat{F}$-unit. We know that  $\dot{\hat{\gamma}}(0)=W+\dot\gamma(0)$, $\dot{\hat{\gamma}}(t)=W+\dot\gamma(t)+(\psi_t^W)^*(\dot f(t) \dot\gamma(f(t)))$ and
\[F((\psi_t^W)^*(\dot f(t) \dot\gamma(f(t))))=e^{\sigma t} ((\psi_t^W)^*F( \dot\gamma(f(t)))=F(\dot\gamma(t))=1.\]
This implies that $\hat\gamma$ is an $\hat F$-unit curve. In order to apply Proposition \ref{translatingF} we need to show that if $v(t)=(\psi_t^W)^*(\dot f(t) \dot\gamma(f(t)))$, then
$g_{v(t)}(v(t),v(t)+W)$ has always the same sign (it cannot be zero):
\begin{align*}
g_{v(t)}(v(t),v(t)+W)&=1+g_{v(t)}(v(t),W)=1+e^{\sigma t}g_{(\psi_t^W)^*( \dot\gamma(f(t)))}((\psi_t^W)^*( \dot\gamma(f(t))),W)\\
&=1+e^{-\sigma t} g_{\dot\gamma(f(t))}(\dot\gamma(f(t)),W).
\end{align*}
Moreover, using that $W$ is a homothetic field, we deduce that $g_{\dot\gamma(t)}(\dot\gamma(t),W)=g_v(v,W)-\sigma t$. Substituting above we finally get
\[g_{v(t)}(v(t),v(t)+W)=e^{-\sigma t}(g_v(v,W)+1),\]
which concludes.
\end{proof}

\section{Conclusions and consequences}
 The first implication of Theorem \ref{theoremcurvature} is that we can reobtain all the Randers and Kropina metrics of constant flag curvature (see \cite{BCS04,Xia13,YoSa14,YoOk12}). In fact, all the metrics that we obtain are geodesically complete, since homothetic vector fields are complete in the spaces of constant curvature (see Remark \ref{geodesicflow}). In particular, the examples of Randers manifolds with constant flag curvature in \cite{BCS04} with the homothetic vector field with norm bigger than one in some subset can be extended to conic Finsler metrics defined in the whole manifold which are geodesically complete. Indeed, this metric is given by \eqref{Zermelo2} for $\varepsilon=1$ and it is defined in
\[A=\{v\in TM: \text{s. t. for $\pi(v)$, } g(W,W)<1\}\cup \{v\in TM: g(v,W)>0;h(v,v)>0\}.\]
Recall that Proposition \ref{transZer} ensures that if $g$ is a Riemannian metric, then this metric has positive definite fundamental tensor, namely, it is a conic Finsler metric.

 Observe that Theorem \ref{thmmatsumoto} and Proposition \ref{Randers-Kro} ensure that pseudo-Randers-Kropina metrics are characterized as the pseudo-Finsler metrics with Matsumoto tensor trivially equal to zero. It is expectable that all the pseudo-Randers-Kropina metrics with constant flag curvature are those obtained as translation  of a pseudo-Riemannian metric $g$ with constant sectional curvature by a homothetic vector field of $g$ (see Proposition \ref{Randers-Kro}). To prove this, it is enough to extend the computations in \cite{BR03,BCS04} for pseudo-Randers metrics and those in \cite{Xia13,YoSa14,YoOk12} for pseudo-Kropina metrics. This seems very likely because in these computations, the signature of the metric does not seem essential. Even if these computations do not hold in the subset of points where the pseudo-Randers-Kropina metric passes from being pseudo-Kropina to being pseudo-Randers, this subset of points is irrelevant for this computation because it has empty interior.

\end{document}